\begin{document}

\newtheorem{theorem}{Theorem}
\newtheorem{conj}{Conjecture}
\newtheorem{prop}{Proposition}
\newtheorem*{aptheorem}{``Theorem''}
\theoremstyle{definition}
\newtheorem{definition}{Definition}

\newcommand{\RR}{\mathbb{R}}
\newcommand{\CC}{\mathbb{C}}
\newcommand{\NN}{\mathbb{N}}
\newcommand{\ZZ}{\mathbb{Z}}
\newcommand{\QQ}{\mathbb{Q}}
\newcommand{\ol}{\overline}
\newcommand{\cA}{\mathcal{A}}
\newcommand{\cD}{\mathcal{D}}
\newcommand{\cF}{\mathcal{F}}
\newcommand{\cM}{\mathcal{M}}
\newcommand{\tcM}{\widetilde{\mathcal{M}}}
\newcommand{\wt}{\widetilde}
\newcommand{\alphas}{\boldsymbol{\alpha}}
\newcommand{\betas}{\boldsymbol{\beta}}
\renewcommand{\a}{\alpha}
\renewcommand{\b}{\beta}
\newcommand{\g}{\gamma}
\renewcommand{\S}{\Sigma}
\renewcommand{\H}{\mathcal{H}}
\newcommand{\x}{\mathbf{x}}
\newcommand{\y}{\mathbf{y}}
\newcommand{\T}{\mathbb{T}}
\newcommand{\s}{\mathfrak{s}}
\newcommand{\Spinc}{\text{Spin}^c}
\newcommand{\gr}{\text{gr}}
\newcommand{\M}{\mathcal{M}}
\newcommand{\wM}{\widetilde{\M}}
\newcommand{\HFh}{\widehat{HF}}
\newcommand{\red}{\text{red}}
\newcommand{\HFK}{\widehat{HFK}}
\newcommand{\HFL}{\widehat{HFL}}
\newcommand{\spinc}{\text{Spin}^c}
\newcommand{\sym}{\text{Sym}}
\renewcommand{\j}{\mathfrak{j}}
\newcommand{\z}{\underline{z}}

\title{A survey of Heegaard Floer homology}
\author{Andr\'as Juh\'asz}
\thanks{AJ was supported by a Royal Society Research Fellowship and OTKA grant NK81203}
\address{Mathematical Institute, University of Oxford, Andrew Wiles Building, Woodstock Road, Oxford, OX2 6GG, UK}
\email{juhasza@maths.ox.ac.uk}
\maketitle

Since its inception in 2001, Heegaard Floer homology has developed
into such a large area of low-dimensional topology that it has become
impossible to overview all of its applications and ramifications in a
single paper. For the state of affairs in 2004,
see the excellent survey article of Ozsv\'ath and Szab\'o~\cite{OSz-survey}.
A decade later, this work has two goals.  The first is to provide a conceptual
introduction to the theory for graduate students and interested
researchers, the second is to survey the current state of the
field, without aiming for completeness.

After reviewing the structure of Heegaard Floer homology, treating it as a black box,
we list some of its most important applications. Many of these
are purely topological results, not referring to Heegaard Floer homology itself.
Then, we briefly outline the construction of Lagrangian intersection Floer homology,
as defined by Fukaya, Oh, Ono, and Ohta~\cite{FOOO}.
Given a strongly $\s$-admissible based Heegaard diagram $(\S,\alphas,\betas,z)$ of the $\spinc$ $3$-manifold~$(Y,\s)$,
we construct the Heegaard Floer chain complex $CF^\infty(\S,\alphas,\betas,z,\s)$ as a special case of the above,
and try to motivate the role of the various seemingly ad hoc features such as admissibility,
the choice of basepoint, and $\spinc$-structures. We also discuss the
proof of invariance of the homology $HF^\infty(\S,\alphas,\betas,\s)$
up to isomorphism under all the choices made, and how to define
$HF^\infty(Y,\s)$ using this in a functorial way (naturality).
Next, we explain why Heegaard Floer
homology is computable, and how  it lends itself to the various
combinatorial descriptions that most students encounter first during their
studies. The last chapter gives an overview of the definition
and applications of sutured Floer homology, which includes sketches of
some of the key proofs.

Throughout, we have tried to collect some of the important open conjectures in the area.
For example, a positive answer to two of these would give a new proof of
the Poincar\'e conjecture.

\subsection*{Acknowledgement} I would like to thank Lino Campos Amorim, Dominic Joyce, Yanki Lekili,
Ciprian Manolescu, Peter Ozsv\'ath, Alexander Ritter, and Zolt\'an Szab\'o
for helpful discussions, and Fyodor Gainullin, Cagatay Kutluhan, Marco Marengon, Goncalo Oliveira,
Jacob Rasmussen, and Andr\'as Stipsicz for their comments on earlier versions of this paper.

\section{Background}

4-manifold topology was revolutionized in 1982 by the work of
Donaldson, who pioneered techniques coming from theoretical physics,
namely gauge theory, to study smooth 4-manifolds. In order to obtain
an interesting invariant of the smooth structure, the idea is to fix
some additional geometric structure on the manifold such as a
Riemannian metric, write down a non-linear PDE, and then study the
topology of the moduli space of solutions. If one is lucky enough,
this does not depend on the additional choices made, only on the
smooth structure.  The Donaldson polynomial
invariants~\cite{Donaldson_poly} arise from the cohomology of a
certain compactification of the moduli space of $SU(2)$ Yang-Mills
instantons over a Riemannian 4-manifold. They are independent of the
choice of metric, but do depend on the smooth
structure. Unfortunately, the lack of compactness often makes it
difficult to work with.

Based on arguments coming from string theory, Seiberg and Witten wrote
down a different set of equations whose solution spaces are usually
compact, and hence easier to work with. They also wrote down a
conjectural relationship between their theory and the Donaldson
polynomial invariants.

Heegaard Floer homology was defined by Ozsv\'ath and
Szab\'o~\cite{OSz1, OSz8}.  It grew out of an attempt to better understand
the Seiberg-Witten invariant of closed 4-manifolds, so that it
becomes more computable.  It consists of a package of invariants of
closed oriented 3-manifolds, maps induced on these by cobordisms, and
a 4-manifold invariant obtained via mixing the various flavors.
Simultaneously, an analogous theory was developed by Kronheimer and
Mrowka, called monopole Floer homology, based directly on the
Seiberg-Witten monopole equations. The two theories have been shown to
be equivalent.  The proof passes through a third type of invariant of
3-manifolds called embedded contact homology (ECH), due to Hutchings.
This is defined in terms of a contact structure on the
3-manifold, but turns out to be independent of this choice.  The
Heegaard Floer, monopole, and ECH invariants of 3-manifolds are
equivalent, but they are adapted to different aspects of 3-manifold
and contact topology.

The motivation for the definition of $HF$ was provided by Atiyah's~\cite{Atiyah} topological quantum field
theory (TQFT) picture and the Atiyah-Floer conjecture~\cite{AF}.
A $(3+1)$-dimensional TQFT over $\ZZ$ assigns to a closed oriented 3-manifold~$Y$ a
finitely generated Abelian group~$Z(Y)$,
and to an oriented smooth cobordism~$W$ from~$Y$ to~$Y'$ a
homomorphism $F_W \colon Z(Y) \to Z(Y')$. This assignment satisfies certain axioms.
For example, it is functorial from the cobordism category of oriented $3$-manifolds to the category of
finitely generated Abelian groups, and $Z(\emptyset) = \ZZ$.
Given a smooth oriented 4-manifold~$X$, one can view it as a cobordism from~$\emptyset$ to~$\emptyset$. Then
$F_X \colon \ZZ  \to  \ZZ$
is multiplication by some integer $n(X)$, which is an invariant of the
smooth 4-manifold $X$.

The Seiberg-Witten invariant of a smooth $\Spinc$ 4-manifold $(X,\s)$ with $b_2^+(X) \ge 2$
is an integer $SW(X,\s)$ obtained by choosing a Riemannian metric~$g$
on $X$, and considering the moduli space of solutions to the so called monopole equations,
up to gauge equivalence. This moduli space is a compact oriented manifold of dimension
\[
d(\s) = \frac{c_1(\s)^2 - 2\chi(X) - 3\sigma(X)}{4}
\]
for a generic~$g$. The moduli spaces corresponding to different generic metrics are cobordant.
Hence, when $d(\s) = 0$, the signed count of elements in this
0-dimensional moduli space is independent of the choice of~$g$,
giving rise to the invariant $SW(X,\s)$.

Unfortunately, the Seiberg-Witten invariant does not quite fit in the above TQFT picture,
but similar constructions are available due to Kronheimer and Mrowka (monopole Floer homology, $HM$)
and to Ozsv\'ath and Szab\'o ($HF$). For several years, it was conjectured that
these two theories are isomorphic, and this has recently been settled by the work of several people.
Surprisingly, the equivalence between $HM$ and $HF$ passes through embedded contact homology ($ECH$),
defined by Hutchings~\cite{ECH1} and Hutchings-Taubes~\cite{ECH2, ECH3}.
To define $ECH$, one starts out with a contact 3-manifold~$(Y,\a)$ (i.e., $\a$ is a $1$-form such that $\a \wedge d\a > 0$),
and the chain complex is generated by certain periodic Reeb orbits
lying in a given homology class $\Gamma \in H_1(Y)$. Taubes~\cite{ECH=SW} proved that
\[
\widecheck{HM}(Y,\s_\a + PD[\Gamma]) \cong ECH(-Y,\a, \Gamma),
\]
where $\s_\a$ is the $\Spinc$-structure given by $\ker(\a)$.
This establishes that $ECH(Y,\a)$ only depends on the $3$-manifold~$Y$.
As of now, there is no intrinsic proof of this fact,
and even showing that $ECH(Y,\a)$ only depends on the contact $2$-plane field $\ker(\a)$ is a formidable task.
Recently, Kutluhan-Lee-Taubes~\cite{HF=HM1, HF=HM2, HF=HM3, HF=HM4} proved that
\[
\widecheck{HM}(Y,\s) \cong HF^+(Y,\s),
\]
passing through a version of embedded contact homology for stable Hamiltonian structures
generalizing contact structures.
Meanwhile, Colin-Ghiggini-Honda~\cite{CGH, CGH2, CGH3} showed that
\[
ECH(-Y,\a,\Gamma) \cong HF^+(Y,\s_\a + PD(\Gamma)),
\]
which, together with the isomorphism constructed by Taubes, also gives
\[
\widecheck{HM}(Y,\s) \cong HF^+(Y,\s).
\]

\section{Overview of the structure of $HF$}

Given a closed, connected, oriented 3-manifold $Y$ and a $\Spinc$-structure $\s \in \Spinc(Y)$,
Heegaard Floer homology assigns to the pair $(Y,\s)$ a finitely generated Abelian group
$\HFh(Y,\s)$, and $\ZZ[U]$-modules $HF^\infty(Y,\s)$, $HF^+(Y,\s)$, and $HF^-(Y,\s)$,
where $\ZZ[U]$ is the polynomial ring in the formal variable $U$. In the future, when
we write $HF^\circ$, we mean one of these four flavors of Heegaard Floer homology.
Furthermore, let
\[
HF^\circ(Y) = \bigoplus_{\s \in \Spinc(Y)} HF^\circ(Y,\s).
\]

Each of these groups carries a relative $\ZZ_{d(c_1(\s))}$-grading, where $d(c_1(\s))$
is the divisibility of the cohomology class $c_1(\s) \in H^2(Y)$.
(If $A$ is a finitely generated Abelian group, the divisibility $d(a)$ of $a \in A$ is $0$ if $a$ is torsion,
and otherwise the image of $a$ in $A/ \text{Tors}$ is $d(a)$ times a primitive element.)
When $c_1(\s)$ is torsion, then $d(c_1(\s)) = 0$, and the relative $\ZZ$-grading can be lifted to an absolute $\QQ$-grading.
Furthermore, each group $HF^\circ(Y)$ carries an absolute $\ZZ_2$-grading.
If $b_1(Y) > 0$ and $\s$ is a non-torsion $\spinc$-structure,
and one takes the Euler characteristic of $HF^+(Y,\s)$ with respect to this $\ZZ_2$-grading,
then one recovers the Turaev torsion of~$Y$ in the $\Spinc$-structure $\s$, cf.\ Turaev~\cite{torsion}
(when $b_1(Y) = 1$, the torsion is calculated in the ``chamber'' containing $c_1(\s)$).
On the other hand,
\[
\chi \left(\HFh(Y,\s) \right) =
\left\{
\begin{array}{l l}
1 & \quad \text{if $b_1(Y) = 0$} \\
0 & \quad \text{if $b_1(Y) > 0$}
\end{array}
\right.
\]
for every~$\s \in \spinc(Y)$.

The three flavors of Heegaard Floer homology are related by the exact sequence
\begin{equation} \label{eqn:LES}
\dots \longrightarrow HF^-(Y,\s) \stackrel{\iota}{\longrightarrow}  HF^\infty(Y,\s) \stackrel{\pi}{\longrightarrow}
 HF^+(Y,\s) \stackrel{\delta}{\longrightarrow} \dots
\end{equation}
This gives rise to the invariant
\[
HF^+_\red(Y,\s) = \text{coker}(\pi) \cong \ker(\iota) = HF^-_\red(Y,\s),
\]
where the isomorphism is given by the coboundary map $\delta$.

Furthermore, we have an exact sequence
\begin{equation} \label{eqn:LES2}
\dots \longrightarrow \HFh(Y,\s) \stackrel{i}{\longrightarrow} HF^+(Y,\s) \stackrel{U^+}{\longrightarrow} HF^+(Y,\s)
\stackrel{p}{\longrightarrow} \dots,
\end{equation}
and similarly,
\[
\dots \longrightarrow HF^-(Y,\s) \stackrel{U^-}{\longrightarrow} HF^-(Y,\s) \longrightarrow \HFh(Y,\s)  \longrightarrow \dots
\]
There is one more piece of algebraic structure on $HF^\circ(Y,\s)$, which is an action of
the group $\Lambda^*\left( H_1(Y) / \text{Tors} \right)$.

When~$Y$ is a rational homology sphere, $HF^+(Y,\s)$ is absolutely $\QQ$-graded
for every $\s \in \spinc(Y)$. The \emph{correction term} $d(Y,\s)$, introduced by
Ozsv\'ath and Szab\'o~\cite{OSz15}, is the minimal grading of any non-torsion
element in the image of $HF^\infty(Y,\s)$ in $HF^+(Y,\s)$. This probably coincides with
the gauge-theoretic invariant of Fr{\o}yshov~\cite{Froyshov}.

Ozsv\'ath and Szab\'o~\cite{OSz1} showed that $HF^\circ$ is well-defined up to isomorphism,
and checked some naturality properties in~\cite{OSz10}. The assignment
$Y \mapsto HF^\circ(Y)$ was made completely functorial by Thurston and the author~\cite{naturality},
where we also showed that the mapping class group of $Y$ acts on $HF^\circ(Y)$.
Naturality is necessary to be able to talk about maps between $HF$ groups, and to
be able to talk about concrete elements. It turns out that the $+$, $-$, and
$\infty$ versions are indeed natural (in analogy with the corresponding
flavors of monopole Floer homology, where there is no basepoint dependence).
However, $\HFh$ is only functorial
on the category of based $3$-manifolds and basepoint preserving diffeomorphisms
(this is work in progress joint with Ozsv\'ath and Thurston).
Indeed, let~$\g$ be a loop in~$Y$ passing through the basepoint~$p$, and consider the automorphism
$d$ of $(Y,p)$ which is a finger move along~$\g$. For $x \in \HFh(Y,p)$, we have
\[
d_*(x) = x + p \circ i ([\g] \cdot x),
\]
where $p$ and $i$ are the maps in the
exact sequence~\eqref{eqn:LES2}, while~$[\g]$ is the class of the curve~$\g$
in $H_1(Y)/\text{Tors}$. This map is non-trivial for example
when
\[
Y = \S(2,3,7) \# (S^1 \times S^2)
\]
and $\g = S^1 \times \{\text{pt}\}$, the basepoint being an arbitrary element of~$\g$.

Heegaard Floer homology enjoys various symmetry properties.
There is an involution on the set $\spinc(Y)$, denoted by $\s \mapsto \ol{\s}$.
If~$\s$ is represented by a vector field~$v$, then $\ol{\s}$ is represented by~$-v$.
Ozsv\'ath and Szab\'o~\cite{OSz8} showed that
\[
HF^\circ(Y,\s) \cong HF^\circ(Y,\ol{\s})
\]
as $\ZZ[U] \otimes_{\ZZ} \Lambda^*(H_1(Y)/\text{Tors})$-modules.
The Heegaard Floer chain complexes gives rise to both homology and cohomology
theories. We denote by $\HFh_*$, $HF^+_*$, and $HF^-_*$ the homologies,
and $\HFh^*$, $HF^*_+$, and $HF^*_-$ the corresponding cohomologies, respectively.
If~$-Y$ denotes~$Y$ with its orientation reversed, then
\[
\HFh^*(Y,\s) \cong \HFh_*(-Y,\s) \text{ and } HF^*_\pm(Y,\s) \cong HF^\mp_*(-Y,\s).
\]

As in a TQFT, cobordisms of 3-manifolds induce homomorphisms. More precisely,
if $(W,\s)$ is a $\Spinc$-cobordism from $(Y_0,\s_0)$ to $(Y_1,\s_1)$,
then Ozsv\'ath and Szab\'o~\cite{OSz10} associate to it a map
\[
F_{W,\s}^\circ \colon HF^\circ(Y_0,s_0) \to HF^\circ(Y_1,\s_1).
\]
When $\s_1$ and $\s_2$ are both torsion, this homomorphism shifts the absolute $\QQ$-grading by the number
\[
d(\s) = \frac{c_1(\s)^2 - 2\chi(W) - 3\sigma(W)}{4}
\]
(note the ``coincidence'' with the dimension of the Seiberg-Witten moduli space).
More generally, there is also a map
\[
F^\circ_{W,\s} \colon HF^\circ(Y_0,\s_0) \otimes \Lambda^* \left( H_1(W)/\text{Tors} \right) \to HF^\circ(Y_1,\s_1).
\]
When $b_2^+(W) > 0$, then $F^\infty_{W,\s} = 0$.

Let $W \colon Y_0 \to Y_1$ be a cobordism with $b_2^+(W) \ge 2$.
An \emph{admissible cut} of the cobordisms $W$ is a 3-manifold $N \subset W$ such that
\begin{itemize}
\item $N$ divides $W$ into two cobordisms $W_1 \colon Y_0 \to N$ and $W_2 \colon N \to Y_1$,
\item $b_2^+(W_1) \ge 1$ and $b_2^+(W_2) \ge 1$,
\item $\delta H^1(N) = 0$ in $H^2(W,\partial W)$.
\end{itemize}
An admissible cut always exists.
Since $F^\infty_{W_1,\s} = 0$, the long exact sequence~\eqref{eqn:LES} implies that
the image of the map
\[
F^-_{W_1,\s|_{W_1}} \colon HF^-(Y_0,\s|_{Y_0}) \to HF^-(N,\s|_N)
\]
lies in $HF^-_\red(N,\s|_N)$. Similarly, the map
\[
F^+_{W,\s|_{W_2}} \colon HF^+(N,\s|_N) \to HF^+(Y_1,\s|_{Y_1})
\]
factors through the projection of $HF^+(N,\s|_N)$ to $HF^+_\red(N,\s|_N)$.
We define the \emph{mixed invariant}
\[
F_{W,\s}^\text{mix}
\colon HF^-(Y_0,\s|_{Y_0}) \otimes_\ZZ \Lambda^*(H_1(W)/\text{Tors}) \to HF^+(Y_1,\s|_{Y_1})
\]
using the formula
\[
F_{W,\s}^\text{mix} = F^+_{W_1,\s|_{W_1}} \circ \tau^{-1} \circ F^-_{W_0,\s|_{W_0}},
\]
where $\tau \colon HF^-_\red(N,\s|_N) \to HF^+_\red(N,\s|_N)$ is the isomorphism
induced by the co\-boundary map $\delta$ in the long exact sequence~\eqref{eqn:LES}.
As the notation suggests, the map~$F_{W,\s}^\text{mix}$ is independent of the choice
of admissible cut $N$.

Given a closed oriented smooth 4-manifold $X$ with $b_2^+(W) \ge 2$ and a $\Spinc$-structure $\s \in \Spinc(X)$,
we define the \emph{absolute invariant} $\Phi_{X,\s}$ as follows.
First, let $W = X \setminus 2B^4$, this can be viewed as
a cobordism from $S^3$ to $S^3$. We will also write~$\s$ for the restriction of~$\s$ to~$W$.
Note that there is a unique $\Spinc$-structure~$\s_0$ on~$S^3$, and that
$HF^-(S^3,\s_0) \cong \ZZ[U]$, while
\[
HF^+(S^3,\s_0) \cong \ZZ[U, U^{-1}] / U \ZZ[U].
\]
We write~$\Theta_-$ for a
generator of the 0-degree part of $HF^-(S^3,\s_0)$ and~$\Theta_+$ for a generator of the 0-degree part of $HF^+(S^3,\s_0)$,
these are both well-defined up to sign. Then the map
\[
\Phi_{X,\s} \colon \ZZ[U] \otimes \Lambda^*(H_1(X)/\text{Tors}) \to \ZZ/\pm1
\]
is defined by taking $\Phi_{X,\s}(U^n \otimes \zeta)$ to be the coefficient of $\Theta_+$ in
$F^\text{mix}_{W,\s}(U^n \cdot \Theta_- \otimes \zeta)$.
Note that $\Phi_{X,\s}$ vanishes on those homogeneous elements whose degree is different from $d(\s)$.
The sign ambiguity comes from the choice of $\Theta_\pm$.
Ozsv\'ath and Szab\'o conjectured that one can recover the Seiberg-Witten invariants from this as follows.

\begin{conj}
Take a basis $b_1,\dots,b_k$
of $H_1(X)/\text{Tors}$, and let $n$ be such that the degree of $U^n \otimes (b_1 \wedge \dots \wedge b_k)$ is $d(\s)$.
Then
\[
SW(X,\s) = \Phi_{X,\s}(U^n \otimes (b_1 \wedge \dots \wedge b_k)).
\]
\end{conj}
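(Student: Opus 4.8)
The natural strategy is to route the conjecture through the equivalence between Heegaard Floer homology and monopole Floer homology. Kronheimer and Mrowka's gluing theory expresses the Seiberg--Witten invariant $SW(X,\s)$ of a closed oriented $4$-manifold with $b_2^+(X)\ge 2$ by a construction parallel to the definition of $\Phi_{X,\s}$: remove two balls, split the resulting cobordism $W\colon S^3\to S^3$ along an admissible cut $N$, use that the $\overline{HM}$-cobordism map over a piece with $b_2^+\ge 1$ vanishes (the monopole analogue of $F^\infty_{W_1,\s}=0$), compose the two resulting maps through the connecting isomorphism of the monopole exact triangle (the analogue of $\tau$), and read off the coefficient of the distinguished degree-$0$ generator of $\widecheck{HM}(S^3)$ in the image of the distinguished degree-$0$ generator of $\widehat{HM}(S^3)$ (the analogues of $\Theta_-$ and $\Theta_+$). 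Since Kutluhan--Lee--Taubes~\cite{HF=HM1,HF=HM2,HF=HM3,HF=HM4} and Colin--Ghiggini--Honda~\cite{CGH,CGH2,CGH3} identify $HF^+(Y,\s)$ with $\widecheck{HM}(Y,\s)$ as $\ZZ[U]$-modules, it would then suffice to show that the mixed construction $F^{\text{mix}}_{W,\s}$ defining $\Phi_{X,\s}$ corresponds, under these isomorphisms, to its monopole counterpart.

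I would establish this in three steps. First, upgrade the $HF=HM$ isomorphism so that it holds for all three flavors at once and compatibly with the long exact sequence~\eqref{eqn:LES}: one needs $HF^-(Y,\s)\to HF^\infty(Y,\s)\to HF^+(Y,\s)$ to match the monopole exact triangle relating $\widehat{HM}$, $\overline{HM}$, and $\widecheck{HM}$, so that $\tau$ corresponds to the monopole connecting isomorphism. Second, and this is the crux, show that this family of isomorphisms, as $Y$ ranges over all closed oriented $3$-manifolds, intertwines the Heegaard Floer cobordism maps $F^\circ_{W,\s}$ with the monopole cobordism maps and is compatible with composition of cobordisms --- that is, the equivalence is a natural isomorphism of the two functors defined on the cobordism category. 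Third, verify the remaining bookkeeping: $\ZZ[U]$-linearity and $\Lambda^*(H_1(Y)/\text{Tors})$-equivariance (already known for a fixed $Y$), compatibility with the absolute $\QQ$-gradings and their $d(\s)$ shifts when $c_1(\s)$ is torsion, and that the isomorphisms carry $\Theta_-$ and $\Theta_+$ to the distinguished degree-$0$ generators of $\widehat{HM}(S^3)$ and $\widecheck{HM}(S^3)$. Assembling these, $F^{\text{mix}}_{W,\s}$ is identified with the monopole mixed map, and evaluating on $U^n\otimes(b_1\wedge\dots\wedge b_k)$ in degree $d(\s)$ gives $\Phi_{X,\s}(U^n\otimes(b_1\wedge\dots\wedge b_k))=SW(X,\s)$, the $\pm1$ ambiguity on each side being absorbed by the choice of homology orientation (equivalently, of $\Theta_\pm$).

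The main obstacle is the second step: \emph{functoriality of the $HF=HM$ equivalence under cobordisms}. The proofs of Kutluhan--Lee--Taubes and Colin--Ghiggini--Honda establish the equivalence as a (filtered) chain homotopy equivalence for a fixed $3$-manifold, routed through embedded contact homology; promoting this to a natural transformation of the two TQFTs --- compatible with cobordism maps, composition, gradings, and module structures --- is a genuinely hard open problem. (One should also note that the monopole side of the picture is fully documented mainly when $b_1(X)=0$; the general case presumably needs a further extension of Kronheimer--Mrowka's gluing theory.) I see two routes to functoriality. One is analytic: build chain-level cobordism maps in the ECH / Seiberg--Witten flow model and in the holomorphic-curve model and compare them directly, which demands delicate neck-stretching and gluing arguments generalizing those already used for closed $3$-manifolds. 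The other is axiomatic: isolate properties of $\widecheck{HM}$-type theories --- distinguished generators of the $S^3$ groups, the surgery exact triangle, a composition law, and normalization of the $S^1\times S^2$ and lens-space cobordism maps --- strong enough to pin the cobordism maps down uniquely, and check that $HF^+$ with its cobordism maps obeys the same list. By comparison, the grading and sign computations of the third step are routine, and since both $SW$ and $\Phi$ carry only an overall sign ambiguity, orientation subtleties do not affect the statement.

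Short of proving functoriality in general, the conjecture can be confirmed in large families by direct computation, which is how the available evidence is organized. Ozsv\'ath and Szab\'o computed $\Phi_{X,\s}$ for K\"ahler, and more generally symplectic, $4$-manifolds with $b_2^+\ge 2$ and found it equal to $SW(X,\s)$ via Taubes' theorem $SW=Gr$; moreover the blow-up formula, the vanishing of $\Phi$ under connected sums of manifolds with $b_2^+\ge 1$, and the fiber-sum (gluing-along-$T^3$) formulas are all available for $\Phi$ and mirror those for $SW$. Feeding these into the Fintushel--Stern and Morgan--Mrowka--Szab\'o cut-and-paste calculus proves the conjecture for every $\spinc$ $4$-manifold assembled from such pieces --- which, although far from all $4$-manifolds with $b_2^+\ge 2$, already covers the large majority of those for which $SW$ has been evaluated.
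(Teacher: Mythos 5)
The statement you are addressing appears in the paper as an open \emph{conjecture} of Ozsv\'ath and Szab\'o; the paper offers no proof of it, and none is known. Your submission is, as you yourself make clear, a strategy outline rather than a proof, so the honest verdict is that there is a genuine gap --- indeed the gap is the entire content of the conjecture. The route you describe (pass through the equivalence $\widecheck{HM}(Y,\s)\cong HF^+(Y,\s)$ of Kutluhan--Lee--Taubes and Colin--Ghiggini--Honda, and match the mixed construction $F^{\text{mix}}_{W,\s}$ with Kronheimer--Mrowka's gluing description of $SW$) is the natural one, and you correctly locate the crux: the equivalence is currently available only as an isomorphism for a fixed $3$-manifold, and there is no proof that it intertwines the cobordism maps of the two theories, let alone compatibly with the exact triangles, the absolute $\QQ$-gradings, the $\Lambda^*(H_1/\text{Tors})$-actions, and the distinguished generators $\Theta_\pm$. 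Until that naturality is established, the chain of identifications in your second paragraph does not close, and steps one and three (matching the exact sequences and the bookkeeping) cannot even be formulated, since they presuppose a specific isomorphism to test.

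Two further cautions. First, even granting full functoriality, one still needs the monopole-side statement that the analogous mixed construction computes $SW(X,\s)$ with the stated normalization, including the $\Lambda^*(H_1(X)/\text{Tors})$ insertion when $b_1(X)>0$; as you note, the published gluing theory is complete mainly when $b_1(X)=0$, so this is an additional unresolved input rather than ``routine bookkeeping.'' Second, your closing paragraph about symplectic manifolds, blow-up formulas, and cut-and-paste calculus is supporting evidence for the conjecture, not a proof of any portion of it in a logically delimited class; it should not be phrased as ``confirming'' the statement. In summary: your proposal is a well-informed research program that correctly identifies why the statement is hard, but it does not prove it, and the missing step --- naturality of the $HF=HM$ equivalence under cobordisms --- is exactly why the statement remains a conjecture in the paper.
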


Note that there is a more general version of $SW$ that is obtained by integrating different
elements of the cohomology of the configuration space over the Seiberg-Witten moduli space,
that is conjectured to agree with $\Phi_{X,\s}$.

The Heegaard Floer package also contains a knot invariant, called \emph{knot Floer homology}.
Given a null-homologous knot or link $K$ in a closed, connected, oriented 3-manifold $Y$, Ozsv\'ath and Szab\'o~\cite{OSz3},
and independently Rasmussen~\cite{ras}, assigned to it
a finitely generated Abelian group $\HFK(Y,K)$. This refines $\HFh(Y)$ in the sense that
there is a filtration on the chain complex defining $\HFh(Y)$ such that the homology of the
associated graded object is $\HFK(Y,K)$. Consequently, there is a spectral sequence
from $\HFK(Y,K)$ converging to $\HFh(Y)$. When $Y = S^3$, the smallest filtration level
for which the inclusion map on homology is non-zero into $\HFh(S^3) \cong \ZZ$ is denoted
by~$\tau(K)$, see~\cite{OSz14}.

In the case of $Y = S^3$, the group $\HFK(K) = \HFK(S^3,K)$ is bi-graded; i.e.,
\[
\HFK(K) = \bigoplus_{i, j \in \ZZ} \HFK_j(K,i).
\]
Here $i$ is called the Alexander grading and $j$ is the homological grading.
This is justified by the fact that
\[
\sum_{i,j \in \ZZ} (-1)^j \cdot \text{rk}\left(\HFK_j(K,i) \right) t^i
\]
is the symmetrized Alexander polynomial $\Delta_K(t)$ of $K$.
The proof relies on the fact that knot Floer homology satisfies an unoriented skein exact sequence.
Another way of saying this is that knot Floer homology categorifies the Alexander polynomial, just
like Khovanov homology is a categorification of the Jones polynomial.
According to a conjecture of Rasmussen~\cite{superpoly, knot-homol}, the two theories are related.
\begin{conj} \label{conj:ss}
There is a spectral sequence starting from the reduced Khovanov
homology of $K$ and converging to $\HFK(K)$. In particular,
\[
\text{rk}\left(\widetilde{\text{Kh}}(K)\right) \ge \text{rk}\left(\HFK(K)\right).
\]
\end{conj}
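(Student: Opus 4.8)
Since the statement is a conjecture, what follows is a program rather than an argument. The natural starting point is the unoriented skein exact sequence for knot Floer homology referred to above: for a diagram $D$ of $K$ with a distinguished crossing, there is an exact triangle relating $\HFK$ of $K$ to $\HFK$ of the two resolutions of $D$ at that crossing, with suitable grading shifts (and possibly only over $\mathbb{F}_2$, or with twisted coefficients). The plan is to iterate this triangle over all $n$ crossings of $D$, realizing $\HFK(K)$ as the homology of an $n$-fold iterated mapping cone, i.e.\ a ``cube of resolutions'' whose vertices are labelled by the complete resolutions of $D$. Each complete resolution is an unlink in $S^3$, and $\HFK$ of an $m$-component unlink is, up to an overall grading shift, $V^{\otimes m}$ with $V$ free of rank two; quotienting by the basepoint module structure leaves $V^{\otimes(m-1)}$, which is exactly the vertex group of the \emph{reduced} Khovanov cube. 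So the cube of resolutions for $\HFK$ has, vertex by vertex, the underlying groups of $\widetilde{\text{Kh}}(K)$.

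Filtering this iterated mapping cone by cube degree then yields a spectral sequence whose $E_1$ page is the direct sum of the vertex groups and whose $E_1$ differential is assembled from the connecting homomorphisms of the skein triangles. The heart of the matter is to identify this $E_1$ complex with the reduced Khovanov complex: one must show that each edge map of the cube agrees, up to sign and grading shift, with the Frobenius-algebra merge or split map in Khovanov's construction. This is a purely local computation, involving only two resolutions differing in a single crossing -- hence a genus-zero cobordism between unlinks -- but it is the step where the two theories genuinely have to be compared, and where all the standard ambiguities (sign conventions in the iterated cone, the precise role of the basepoint, working over $\mathbb{F}_2$ versus with twisted coefficients, and matching the $(i,j)$-bigrading of $\HFK$ with the $(q,\delta)$-grading on Khovanov homology) must be pinned down. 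Granting this identification, the $E_1$ page is $\widetilde{\text{Kh}}(K)$, the spectral sequence abuts to $\HFK(K)$, and comparing total ranks gives the inequality $\text{rk}(\widetilde{\text{Kh}}(K)) \ge \text{rk}(\HFK(K))$.

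Two further issues need attention. First, well-definedness: the iterated mapping cone depends a priori on the chosen diagram and on the order in which crossings are resolved, so one must prove that the resulting spectral sequence -- or at least its $E_1$ differential up to the relevant chain-homotopy equivalence -- is a knot invariant, via homological perturbation and Reidemeister-move arguments. Second, the skein triangle in the form needed, with all grading shifts globally consistent across the cube, is cleanest when routed through the branched double cover, so one alternative is to start from the already-established Ozsv\'ath--Szab\'o spectral sequence from reduced Khovanov homology of the mirror to $\HFh(\Sigma(K))$ and transport it to $\HFK$ using an identification of $\HFK$ with an appropriate Floer homology of $\Sigma(K)$; the difficulty there is that this identification is delicate and does not obviously carry the Khovanov filtration.

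I expect the identification of the $E_1$ differential with the reduced Khovanov differential to be the main obstacle. Obtaining the correct $E_1$ \emph{groups} is essentially formal once the skein triangle is in hand, but matching the \emph{differential}, signs and basepoint behaviour included, is where the real content lies, which is presumably why the statement is still phrased as a conjecture here. A more robust alternative is to bypass Khovanov homology directly: construct a single bigraded ``$\mathfrak{gl}_{N|N}$-type'' homology from a cube of singular resolutions that specializes both to Khovanov homology and, via Ozsv\'ath--Szab\'o's cube-of-resolutions model, to $\HFK(K)$, and extract the spectral sequence by comparing the two specialization filtrations; the obstacle then shifts to building that intermediate theory and proving the second specialization.
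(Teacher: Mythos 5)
The statement you are asked about is a \emph{conjecture} (due to Rasmussen), and the paper offers no proof of it; so there is nothing to check your argument against, and you are right to present a program rather than a proof. Within that program, two remarks. First, the route the survey itself gestures at is different from your primary one: in the section on sutured Floer homology, the author recalls that Kronheimer and Mrowka have already constructed a spectral sequence from reduced Khovanov homology to their singular/sutured \emph{instanton} knot invariant, so that the conjecture for knots would follow from an isomorphism between $\HFK(K)$ and the sutured instanton knot invariant. That shifts all the difficulty into comparing two Floer theories rather than into building a cube of resolutions for $\HFK$ directly, and it is worth adding to your list of approaches.

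Second, your primary route --- iterating the unoriented skein triangle for $\HFK$ over all crossings and matching the $E_1$ page with the reduced Khovanov complex --- has a concrete known obstruction that you underplay. Manolescu's unoriented skein exact triangle holds for a stabilized version of knot Floer homology (extra tensor factors keyed to the number of basepoints, which changes between the two resolutions since they have different numbers of components), and the grading shifts in the triangle depend on the resolution in a way that is not globally consistent over the cube; moreover the resulting edge maps are cobordism-type maps on $\HFK$ of unlinks that are \emph{not} the Khovanov merge/split maps. This is precisely why the naive cube of unoriented resolutions had not produced the spectral sequence, and it is stronger than the ``signs and conventions'' caveat you give: the $E_1$ differential genuinely fails to be the Khovanov differential without substantial modification. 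Your closing alternative --- an intermediate bigraded theory built from a cube of (oriented, singular) resolutions that specializes to both Khovanov homology and to $\HFK$ --- is the approach that has the best prospects, but as stated it remains a program: the existence of such an intermediate theory and the proof that its second specialization recovers $\HFK$ are exactly the open content of the conjecture, not a reduction of it.
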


The author showed in~\cite{cob} that knot cobordisms induce maps on knot Floer homology,
making the categorification complete.
However, to make knot Floer homology functorial, one needs to work with based knots, according to the work of Sarkar~\cite{basepoint}.

It is a classical result that the degree of $\Delta_K(t)$ provides a lower bound on the
Seifert genus $g(K)$ (which is the minimal genus of an oriented surface in $S^3$ bounded by $K$).
Ozsv\'ath and Szab\'o showed in~\cite{OSz6} that in fact knot Floer homology detects the Seifert genus in the sense that
\[
g(K) = \max \{\, i \in \ZZ \,\colon\, \HFK_*(K,i) \neq 0 \,\}.
\]
Furthermore, by work of Ghiggini-Ni~\cite{Ghiggini, fibred, corrigendum}
and the author~\cite{decomposition, polytope}, the knot~$K$ is fibred if and only if
\[
\HFK_*(K,g(K)) \cong \ZZ.
\]

A generalization of knot Floer homology, also due to Ozsv\'ath and Szab\'o~\cite{OSz2}, is called
\emph{link Floer homology}. Given a link $L$ in $S^3$, this invariant is denoted by $\HFL(L)$,
and is graded by $H_1(S^3 \setminus L)$. Its graded Euler characteristic gives rise to the
multivariable Alexander polynomial of $L$, and it detects the Thurston norm of the link complement.

Since knot Floer homology detects the genus, it is sensitive to Conway mutation. E.g., it distinguishes
the Conway and the Kinoshita-Teresaka knots, as the first one has genus~$3$, while the second one
has genus~$2$. The $\delta$-grading on $\HFK(K)$ is defined as the difference of the Alexander
and the homological gradings. Then we have the following conjecture, communicated to me by Zolt\'an
Szab\'o, and supported by computational evidence.

\begin{conj}
The rank of knot Floer homology is unchanged by Conway mutation in each $\delta$-grading.
\end{conj}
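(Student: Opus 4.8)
Fix a Conway sphere $S$ for $K$: an embedded $2$-sphere meeting $K$ transversely in four points, which splits $(S^3,K)$ into four-ended tangles $(B_1,T_1)$ and $(B_2,T_2)$, so that $K=T_1\cup T_2$ and the mutant is $K^\mu=T_1\cup\mu(T_2)$, where $\mu$ is one of the three $\pi$-rotations of the four-punctured sphere $(S^2,\{p_1,\dots,p_4\})$. These rotations, together with the identity, form a Klein four-group inside the mapping class group of the four-punctured sphere, and each $\mu$ lifts to a translation by a two-torsion point in the double branched cover $T^2\to S^2$. From the latter one recovers Viro's fact that the double branched cover $\Sigma_2(K)$ of $S^3$ over $K$ satisfies $\Sigma_2(K)\cong\Sigma_2(K^\mu)$, hence $\HFh(\Sigma_2(K))\cong\HFh(\Sigma_2(K^\mu))$; but $\HFK(K)$ remembers the pair $K\subset S^3$ rather than merely $\Sigma_2(K)$ --- indeed it distinguishes the genus-$3$ Conway knot from the genus-$2$ Kinoshita-Teresaka knot --- so a genuinely relative tool is required. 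The plan is to use a tangle-valued refinement of knot Floer homology (bordered-sutured Floer homology, or the peculiar modules of Zibrowius): to each $(B_i,T_i)$ one attaches a chain-level invariant $\mathcal{T}(B_i,T_i)$, a module over an algebra $\mathcal{A}$ of the parametrized four-punctured sphere, and there is a pairing theorem $\HFK(T_1\cup T_2)\cong H_*(\mathcal{T}(B_1,T_1)\otimes_{\mathcal{A}}\mathcal{T}(B_2,T_2))$ (a box tensor product in the sense of bordered Floer homology) intertwining the Alexander and Maslov gradings. This works cleanest over $\mathbb{F}_2$, to which I would restrict; the statement over $\QQ$ (or $\ZZ$) would additionally need the sign-refined pairing theorem.

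Since $\mu$ is a diffeomorphism of the parametrized boundary, it induces an automorphism $\mu_*$ of $\mathcal{A}$ and carries $\mathcal{T}(B_2,T_2)$ to the $\mu_*$-twisted module ${}^{\mu_*}\mathcal{T}(B_2,T_2)\cong\mathcal{T}(B_2,\mu(T_2))$, so that $\HFK(K^\mu)\cong H_*(\mathcal{T}(B_1,T_1)\otimes_{\mathcal{A}}{}^{\mu_*}\mathcal{T}(B_2,T_2))$. It therefore suffices to prove that ${}^{\mu_*}\mathcal{T}(B_2,T_2)$ is quasi-isomorphic to $\mathcal{T}(B_2,T_2)$ as a $\delta$-graded module, up to an overall shift of $\delta$. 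The clean way to package this --- and the step I would actually prove --- is that after collapsing the Alexander and Maslov gradings on $\mathcal{A}$ to their difference, the automorphism $\mu_*$ becomes inner, so that twisting by it leaves every $\delta$-graded $\mathcal{A}$-module isomorphic to the original. In the immersed-curve incarnation of the tangle invariant, this is the assertion that the curve of a four-ended tangle on the four-punctured sphere, carrying only its $\delta$-grading, is invariant under the hyperelliptic involution $\mu$ up to an explicit global grading correction.

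The main obstacle is precisely this grading bookkeeping, and passing to the $\delta$-grading is genuinely unavoidable: since the Conway and Kinoshita-Teresaka knots have different Seifert genera, the genus-detection theorem shows $\mu_*$ cannot preserve the Alexander grading, and a short computation rules out preservation of the Maslov grading as well. What I would verify, from the Kauffman-states formula (or the relative $\spinc$/periodic-domain formula) for the bigrading on the tangle invariant, is that the $\pi$-rotation $\mu$ interchanges the two combinatorial contributions entering $A$ and $M$ asymmetrically in such a way that the difference $A-M$, which is the $\delta$-grading, is preserved while $A$ and $M$ are each shifted by the same amount. Granting this, $\mu_*$ is an inner automorphism of the $\delta$-graded algebra, ${}^{\mu_*}\mathcal{T}(B_2,T_2)\simeq\mathcal{T}(B_2,T_2)$ as $\delta$-graded modules up to a global shift, and the pairing theorem yields $\text{rk}\,\HFK_\delta(K)=\text{rk}\,\HFK_\delta(K^\mu)$ for every $\delta$, as claimed.

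Two remarks. For a genus-preserving mutation one expects the stronger statement that all of $\HFK$ is unchanged, since then $\mu_*$ should already be grading-preserving on the full bigraded algebra and no collapse is needed; this serves as a useful sanity check. A more hands-on alternative would be to induct via the unoriented skein exact triangle satisfied by $\HFK$, resolving the crossings of $T_2$ so as to reduce $\mu$ to mutations of rational tangles --- but controlling the interaction of the three terms of the triangle with the mutation, together with the $\delta$-grading of the connecting homomorphisms, appears at least as delicate as the tangle-algebra computation above, so I would keep the bordered approach as primary.
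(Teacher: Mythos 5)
The statement you are addressing is presented in the paper as an open conjecture (attributed to Szab\'o and ``supported by computational evidence''), not as a theorem; the paper contains no proof of it, so there is nothing to compare your argument against. Judged on its own terms, what you have written is a research program rather than a proof, and you concede as much: the entire content of the conjecture has been relocated into the single unproved assertion that, after collapsing the bigrading on the tangle algebra $\mathcal{A}$ to the $\delta$-grading, the automorphism $\mu_*$ induced by the $\pi$-rotation becomes inner, equivalently that ${}^{\mu_*}\mathcal{T}(B_2,T_2)$ is $\delta$-graded quasi-isomorphic to $\mathcal{T}(B_2,T_2)$ up to an overall shift. The reduction to this statement via a pairing theorem is routine once such a tangle invariant exists; the statement itself is the hard part, and nothing in your sketch constrains it. In particular, an automorphism of a graded algebra need not be inner merely because it acts trivially on some invariant one cares about, and ``invariance of the immersed curve under the hyperelliptic involution'' is a genuinely geometric claim about which curves (with local systems) can arise as tangle invariants --- it requires a classification of the possible components and a separate argument for each type, none of which is supplied. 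Your grading discussion is likewise only a plan: you assert that the Alexander and Maslov gradings are shifted ``by the same amount'' under $\mu$ without exhibiting the computation, and since you yourself observe that neither grading individually can be preserved, this is exactly the delicate cancellation that must be verified rather than announced.

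That said, the strategy is not a wrong approach: a tangle-decomposition argument of essentially this shape (peculiar modules over $\mathbb{F}_2$, identification of the invariant with an immersed multicurve on the four-punctured sphere, and componentwise invariance under the elliptic involution up to a computable $\delta$-shift) is the route by which this conjecture was eventually settled over $\mathbb{F}_2$. So the gap is not conceptual misdirection but the absence of the two substantive inputs: (i) the structure theorem for the tangle invariant that makes ``invariance under $\mu$'' checkable component by component, and (ii) the grading computation showing the shift is uniform. Until those are written down, you have reformulated the conjecture, not proved it.
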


So far, $\HFL(L)$ has proved to been torsion-free in each example computed.

\begin{conj} \label{conj:free}
The group $\HFL(L)$ is torsion free for every link~$L$.
\end{conj}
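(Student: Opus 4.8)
The statement is a conjecture; what follows is the line of attack I would pursue, and the point at which it stalls. The first step is to reduce to \emph{prime} links and to a convenient chain-level model. The Künneth formula for link Floer homology under connected sum expresses $\HFL(L_1\#L_2)$, up to a grading shift, via a spectral sequence whose $E_2$-page is built from $\HFL(L_1)$, $\HFL(L_2)$ and their $\ZZ$-torsion products, with an analogous statement for split links via disjoint union; running an induction on the number of prime and split summands, the inductive hypothesis that one factor is torsion free annihilates the torsion products, and a tensor product of torsion-free Abelian groups is torsion free. Since tensoring with a fixed finitely generated free Abelian group changes neither the torsion submodule nor its triviality, one may pass freely between $\HFL(L)$, the collapsed group $\HFK(S^3,L)$, and the homology of the blocked grid complex $\wt{GC}(G)$ of a grid diagram $G$ for $L$; I would work with the last of these, a finitely generated sign-refined complex over $\ZZ$ whose differential is an explicit signed count of empty rectangles, with at most two rectangles between any two generators.

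By the universal coefficient theorem, Conjecture~\ref{conj:free} is equivalent to the assertion that $\dim_{\mathbb{F}}\HFL(L;\mathbb{F})$ is independent of the field $\mathbb{F}$, equivalently that the rank of the grid differential is unchanged by reduction modulo every prime $p$, equivalently that the Smith normal form of each boundary matrix of $\wt{GC}(G)$ has all elementary divisors equal to $\pm1$. The plan is to establish this unimodularity directly: for a well-chosen grid diagram and sign assignment, perform a sequence of Gaussian eliminations over $\ZZ$ --- cancelling pairs of generators joined by a single rectangle, whose coefficient is then a unit --- reducing $\wt{GC}(G)$ to a much smaller complex on which the residual differential can be controlled by hand, ideally until it vanishes, which would exhibit the homology as a free Abelian group. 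A parallel approach works on the sutured side: $\HFL(L)$ is the sutured Floer homology of the link complement with meridional sutures, and one could try to reach a product sutured manifold (whose sutured Floer homology is $\ZZ$) by a sequence of surface decompositions, tracking torsion through the surface decomposition formula. As a check, the conjecture is already known for alternating links, where $\HFL$ is thin and was computed explicitly by Ozsv\'ath and Szab\'o in terms of the multivariable Alexander polynomial and the signature.

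The main obstacle is exactly this last step: proving the field-independence, or equivalently the unimodularity of the differential. There is currently no structural reason forbidding $p$-torsion for all $p$ simultaneously. The graded Euler characteristic recovers only the multivariable Alexander polynomial, and so pins down only alternating sums of ranks; the algebraic structures --- the $U$-actions, the various gradings, and the long exact sequences relating the flavors --- are not compatible with torsion-freeness under the manipulations one would need to iterate; the genus- and fiberedness-detection theorems see only the extreme Alexander grading; and the grid complex, although combinatorial, is far too large, and its sign refinement too constrained, for its Smith normal form to be visibly trivial. The conjecture agrees with every computation performed to date, but --- as with the equally open analogues for $\HFK(S^3,K;\ZZ)$ and for the sutured Floer homology of an arbitrary balanced sutured manifold --- a proof in general appears to require a genuinely new structural input.
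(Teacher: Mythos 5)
This statement is Conjecture~\ref{conj:free}, which the paper does not prove: it is recorded as an open problem, supported only by the remark that $\HFL(L)$ has been torsion free in every example computed so far. You correctly identify it as open and stop short of claiming a proof, so there is no argument in the paper to compare yours against. For what it is worth, your preliminary reductions are legitimate --- the chain-level K\"unneth argument for connected and split sums (the Tor terms vanish under the inductive hypothesis since the chain groups are free), the universal-coefficient reformulation as field-independence of ranks, equivalently unimodularity of the Smith normal form of the sign-refined grid differential, and the known case of alternating links --- and the point at which you stall, namely establishing that unimodularity, is exactly where the conjecture remains open.
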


Knot and link Floer homology are invariant of the link complement. It is natural to ask whether
they are particular cases of some more general invariant for 3-manifolds with boundary.
There are two existing such theories, namely, sutured Floer homology (SFH) due to the author~\cite{sutured},
and bordered Floer homology due to Lipshitz, Ozsv\'ath, and Thurston~\cite{bordered}.
However, for both, one needs more structure on the boundary. SFH is defined for sutured manifolds,
which were introduced by Gabai~\cite{Gabai, Gabai2}. A \emph{sutured manifold} is a pair $(M,\g)$, where $M$
is a compact oriented 3-manifold with boundary, and $\g \subset \partial M$ can be thought
of as a thickened oriented one-manifold that divides $\partial M$ into subsurfaces $R_-(\g)$
and~$R_+(\g)$. The components of~$\g$ are called the \emph{sutures}.
Then the sutured Floer homology $SFH(M,\g)$ is a finitely generated Abelian group.

If $p \in Y$ is a point, then the sutured manifold $Y(p)$ is obtained by removing a ball around
$p$ and putting a single suture on the boundary. For this, we have
\[
SFH(Y(p)) \cong \HFh(Y).
\]
Furthermore, if $L$ is a link in $Y$, then $Y(L)$ denotes the sutured manifold where $M = Y \setminus N(L)$,
and on each boundary torus, we have two oppositely oriented meridional sutures.
Then
\[
SFH(S^3(L)) \cong \HFL(L).
\]
So SFH is a common generalization of both $\HFh$ and $\HFL$.
Note that one can define $Y(p)$ and $Y(L)$ canonically using the real blow-up construction, cf.~\cite[Definitions~2.4 and~2.5]{naturality}.

The bordered Floer complex $\widehat{CFD}(M)$ is defined for a compact 3-manifold $M$ with connected parametrized boundary.
The parametrization amounts to fixing a handle decomposition of $\partial M$ with a single 0-handle.
Here, $\widehat{CFD}(M)$ is a differential graded $\mathcal{A}(\partial M)$-module, and $\mathcal{A}(\partial M)$
is a differential graded algebra that depends on the handle decomposition.
If $S$ is a surface in the 3-manifold $Y$ that cuts it into pieces $-M_1$ and $M_2$, then we have the gluing formula
\[
\HFh(Y) \cong \text{Mor}_{\mathcal{A}(S)}\left( \widehat{CFD}(M_1), \widehat{CFD}(M_2) \right).
\]
Bordered Floer homology is currently being developed at a rapid pace, and due to
space constraints, we refer the interested reader to the survey article
of Lipshitz, Ozsv\'ath, and Thurston~\cite{bordered-tour}.

It is worth pointing out the relationship between SFH and the bordered theory.
Given a 3-manifold $M$ with parameterized boundary, for a set of sutures $\g$, there
is an associated $\mathcal{A}(\partial M)$ module such that tensoring with it
one obtains $SFH(M,\g)$, see~\cite{bimod}. On the other hand, bordered Floer homology can also
be recovered from $SFH(M,\g)$ for all $\g$ and certain cobordism maps between these.

Heegaard Floer homology and monopole Floer homology are equivalent. One definite advantage
of the former is that it can be computed algorithmically. The breakthrough results
in this direction are due to Sarkar and Wang~\cite{Suc}, who gave
an algorithm for computing $\HFh(Y)$ for an arbitrary closed, connected, oriented 3-manifold~$Y$,
and to Manolescu, Ozsv\'ath, and Sarkar~\cite{MOS}, who gave a combinatorial characterization
of knot Floer homology $\HFK(S^3,K)$, where the input data is a grid diagram for~$K$.
The latter led to an invariant of Legendrian and transverse knots in contact
3-manifolds, cf.\ Ozsv\'ath, Szab\'o, and Thurston~\cite{OST}.

The hat version of $HF$ is considerably simpler to compute than the other flavors.
It took Manolescu, Ozsv\'ath, and Thurston~\cite{link-grid, MOT} several years to
bring the grid diagram approach to fruition and show that all flavors of~$HF$,
including the 4-manifold invariants, are algorithmically computable. The input data in these is
a surgery presentation of the 3-manifold, where the link on which we do integral surgery
is given by a grid diagram. The combinatorial theory of Heegaard Floer homology has grown
into a large area that we do not intend to cover here, instead, we refer the reader to
the survey article of Manolescu~\cite{grid-survey}. It is important to note that the above mentioned
algorithms are all far from being polynomial time and are unsuitable for even computing
the knot Floer homology of slightly larger knots. Also, so far, these theories have shed
very little light on the geometry of 3- and 4-manifolds. One notable result is due to
Sarkar~\cite{grid-tau}, which is the second completely combinatorial proof of the Milnor
conjecture on the slice genera of torus knots (following Rasmussen's proof~\cite{Milnor-conj} via Khovanov
homology).

Heegaard Floer homology has been really fruitful in the study of contact 3-manifolds,
and Legendrian and transverse knots. Given a contact 3-manifold $(Y,\xi)$, Ozsv\'ath
and Szab\'o~\cite{OSz4} associate to it an element
\[
c(\xi) \in \HFh(-Y)/\pm 1.
\]
This captures a lot of geometric information about the contact structure, as we shall see in the
following section.

\section{Applications}

The goal of this section is to showcase some of the many applications of
Heegaard Floer homology, with an emphasis on results whose statements are
purely topological and do not refer to~$HF$ itself.
We also list results that show that $HF$ contains very deep geometric information.

Let $g^*(K)$ denote the 4-ball genus of a knot $K \subset S^3$; i.e., the minimal
genus of a smooth oriented surface bounded by $K$ in $D^4$.
Currently, there is no algorithm known for computing $g^*(K)$, or even to
determine whether $g^*(K) = 0$ (such knots are called \emph{slice}).
For example, it is not known whether the 11-crossing Conway knot is slice
(whereas its mutant, the Kinoshita-Teresaka knot, is slice).
Using knot Floer homology, Ozsv\'ath and Szab\'o~\cite{OSz14} constructed an invariant $\tau(K) \in \ZZ$ such that
\begin{equation} \label{eqn:tau}
|\tau(K)| \le g^*(K).
\end{equation}
Moreover, $\tau$ descends to a group homomorphism from the concordance group of knots in~$S^3$ to $\ZZ$.
This allowed them to give another proof of the Milnor conjecture, originally
proved by Kronheimer and Mrowka~\cite{KM1} using Donaldson invariants. This states that for the torus
knot $T_{p,q}$, one has
\[
g^*(T_{p,q}) = \frac{(p-1)(q-1)}{2}.
\]
As alluded to in the previous section, Sarkar reproved inequality~\eqref{eqn:tau} using grid diagrams and
combinatorics, giving a purely elementary proof of the Milnor conjecture.
Note that Rasmussen introduced the $s$-invariant using Khovanov homology, which also gives
a lower bound on the 4-ball genus, as $|s(K)| \le 2g^*(K)$. Rasmussen conjectured that
$s(K) = 2\tau(K)$, which was then disproved by Hedden and Ording~\cite{HO}. Rasmussen~\cite{Milnor-conj}
gave the first purely combinatorial proof of the Milnor conjecture using his $s$-invariant.

There are many examples of knots~$K$ with trivial Alexander polynomial -- and due to
Freedman, these are topologically slice -- but for which $\tau(K) \neq 0$, and are hence not smoothly slice.
Such knots can be used to construct exotic smooth structures on $\RR^4$.

We already mentioned that knot Floer homology detects the Seifert genus and fibredness of
a knot. Ozsv\'ath and Szab\'o~\cite{OSz6} proved that $\HFh(Y)$ detects the Thurston norm of $Y$,
while Ni~\cite{fibred-3mfd} showed that it also detects fibredness of $Y$. It is an interesting question
how much geometric information is contained by the Heegaard Floer groups. For example, currently
no relationship is known between $\pi_1(Y)$ and $\HFh(Y)$. We state an important conjecture
that would make progress in this direction. But first, we need two definitions.

Note that for a rational homology 3-sphere $Y$, we always have
\[
\text{rk}\,\HFh(Y) \ge |H_1(Y)|.
\]
Indeed, for every $\spinc$-structure $\s \in \spinc(Y)$, the Euler characteristic of $\HFh(Y,\s)$
with respect to the absolute $\ZZ_2$-grading is~$1$ for
a rational homology 3-sphere. So $\text{rk}\, \HFh(Y,\s) \ge 1$ for every $\s \in \spinc(Y)$.
A rational homology 3-sphere $Y$ is called an \emph{L-space}
if
\[
\text{rk}\,\HFh(Y) = |H_1(Y)|;
\]
i.e., if its Heegaard Floer homology is as simple as possible.
This is equivalent to saying that for every $\spinc$-structure $\s \in \spinc(Y)$,
we have $\text{rk}\, \HFh(Y,\s) = 1$.
The terminology originates from the fact that every lens-space is an L-space.
However, there are many more: Ozsv\'ath and
Szab\'o~\cite{OSz12} showed that the double cover of~$S^3$ branched over any non-split alternating link is
an L-space. Furthermore, they proved~\cite{OSz5} that every 3-manifold with elliptic geometry is an L-space.
To extend the notion of L-spaces from rational homology 3-spheres to arbitrary 3-manifolds, we need to
look beyond the hat version of Heegaard Floer homology.
We say that a 3-manifold~$Y$ is an L-space if $HF^+_{\text{red}}(Y) = 0$.

A group $G$ is called \emph{left-orderable} if it is non-trivial, and it can be endowed with a linear order such that
if $g < f$ for $g$, $f \in G$, then $hg < hf$ for every $h \in G$.

\begin{conj} \label{conj:orderable}
Let $Y$ be an irreducible rational homology 3-sphere. Then the following three statements are equivalent.
\begin{enumerate}
\item \label{it:1} $Y$ is an $L$-space,
\item \label{it:3} $\pi_1(Y)$ is not left-orderable,
\item \label{it:2} $Y$ carries no taut foliation.
\end{enumerate}
\end{conj}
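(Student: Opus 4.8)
The plan is to establish the cycle of implications $(1)\Rightarrow(3)\Rightarrow(2)\Rightarrow(1)$; of these, only the first is currently a theorem, so the substance of any proposal lies in strategies for the remaining two. Throughout one uses that $Y$ is a rational homology sphere, so every plane field on it is co-orientable --- any foliation or contact structure in sight is automatically transversely orientable --- and every $\spinc$-structure has torsion $c_1$, so the relevant groups are absolutely $\QQ$-graded.

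For $(1)\Rightarrow(3)$ --- equivalently, a taut foliation $\mathcal F$ on $Y$ obstructs $Y$ being an L-space --- I would run the Eliashberg--Thurston perturbation, with the foliation-regularity issues handled by the refinements of Bowden and of Kazez--Roberts, to replace $\mathcal F$ by a weakly symplectically fillable contact structure $\xi$, the filling built from the foliated product $Y\times[0,1]$. Cap this filling off to a closed symplectic $4$-manifold $X$ in which $Y$ separates, arranging --- possibly after a symplectic fibre sum --- that $b_2^+\ge1$ on each side, so that $b_2^+(X)\ge2$ and, since $H^1(Y)=0$, the copy of $Y$ is an admissible cut of $W=X\setminus 2B^4$. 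The mixed invariant $F^{\text{mix}}_{W,\s}=F^+_{W_2,\s}\circ\tau^{-1}\circ F^-_{W_1,\s}$ factors through $\tau\colon HF^-_\red(Y,\s|_Y)\to HF^+_\red(Y,\s|_Y)$; so if $Y$ were an L-space, whence $HF^+_\red(Y)=0$ and therefore also $HF^-_\red(Y)=0$ by~\eqref{eqn:LES}, this composite, and with it $\Phi_{X,\s}$, would vanish. But for $\s$ the canonical $\spinc$-structure of the symplectic form, $\Phi_{X,\s}\ne0$ --- the Heegaard Floer counterpart, due to Ozsv\'ath--Szab\'o, of Taubes' theorem that $SW=\pm1$ on symplectic $4$-manifolds. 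This contradiction yields the implication.

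For $(3)\Rightarrow(2)$, in contrapositive form ``$\pi_1(Y)$ left-orderable $\Rightarrow$ $Y$ carries a taut foliation,'' the plan is to run the Thurston--Calegari--Dunfield universal-circle correspondence in reverse: from a faithful orientation-preserving action of $\pi_1(Y)$ on $\RR$ construct a branched surface in $Y$ carrying a compatible universal-circle action, and thicken it to a taut foliation. For $(2)\Rightarrow(1)$, ``$\pi_1(Y)$ not left-orderable $\Rightarrow$ $Y$ an L-space,'' I would cut $Y$ along its JSJ tori, dispose of the Seifert fibered pieces and of graph manifolds using the equivalences already known there (Lisca--Stipsicz, Boyer--Gordon--Watson, Boyer--Clay), and for a hyperbolic piece $M$ compare the boundary slopes detected by left-orderable representations with the slopes along which Dehn filling $M$ yields a non-L-space, gluing via the bordered formula $\HFh(Y)\cong\text{Mor}_{\cA(S)}(\widehat{CFD}(M_1),\widehat{CFD}(M_2))$ in its slope-sensitive immersed-curve form.

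The main obstacle is the absence of any direct functorial passage between left-orderability --- a purely group-dynamical condition --- and Heegaard Floer homology. Foliations are the natural intermediary, but \emph{both} bridges to them are open: constructing taut foliations on rational homology spheres is genuinely hard, precisely because Gabai's sutured-manifold hierarchy needs $b_1(Y)>0$, which fails here; and upgrading the universal-circle action attached to a taut foliation to an action on $\RR$ --- equivalently, to left-orderability of $\pi_1(Y)$ --- is not known once atoroidal pieces are present. I therefore expect the hyperbolic case, together with the behaviour of all three conditions under gluing along incompressible tori, to be exactly where the argument stalls.
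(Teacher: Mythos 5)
This statement is an open conjecture in the paper, not a theorem: the paper offers no proof, only attributions (the equivalence of~\eqref{it:1} and~\eqref{it:3} is due to Boyer--Gordon--Watson, and Ozsv\'ath--Szab\'o proved that~\eqref{it:1} implies~\eqref{it:2}). Your proposal correctly identifies this state of affairs, and your sketch of the one known implication is essentially the standard Ozsv\'ath--Szab\'o argument that the paper alludes to elsewhere (Eliashberg--Thurston perturbation of the taut foliation to a weakly fillable contact structure, a symplectic cap, and non-vanishing of the mixed/closed four-manifold invariant for symplectic manifolds, which forces $HF^+_{\red}(Y)\neq 0$). One technical slip: for a rational homology sphere it is \emph{not} true that every plane field is co-orientable, since $H^1(Y;\ZZ_2)\cong\mathrm{Hom}(H_1(Y),\ZZ_2)$ can be nonzero (e.g.\ $\RR P^3$); the conjecture is normally stated for co-orientable taut foliations, and some hypothesis of this kind is needed before invoking Eliashberg--Thurston.

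The genuine gap is the one you yourself flag: the implications involving left-orderability, and the converse ``no taut foliation $\Rightarrow$ L-space,'' are open, and the strategies you outline (reversing the universal-circle construction, JSJ decomposition plus bordered/immersed-curve gluing) are research programmes rather than proofs. They are known to succeed for Seifert fibered and graph manifolds, but the hyperbolic pieces and the behaviour of all three conditions under torus gluing are precisely where no argument currently exists. So the proposal cannot be completed into a proof of the statement; it is an accurate survey of why the conjecture is open, which is consistent with how the paper presents it.
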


The conjecture that~\eqref{it:1} and~\eqref{it:3} are equivalent is due to Boyer, Gordon, and Watson~\cite{orderable}.
Ozsv\'ath and Szab\'o~\cite{OSz6} proved that~\eqref{it:1} implies~\eqref{it:2}.
Now we state another conjecture, originally due to Ozsv\'ath and Szab\'o, cf.\ Hedden and Ording~\cite{splicing}.

\begin{conj} \label{conj:3-sphere}
If $Y$ is an irreducible homology sphere that is an L-space, then $Y$ is homeomorphic to
either~$S^3$ or the Poincar\'e homology sphere.
\end{conj}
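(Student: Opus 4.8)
The plan is to reduce, via geometrization and the JSJ decomposition, to the case that $Y$ contains a hyperbolic piece, dispose of the remaining cases using known Heegaard Floer computations together with the already-established part of Conjecture~\ref{conj:orderable}, and then confront the hyperbolic case, where essentially all the genuine difficulty resides.

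Since $Y$ is irreducible, its JSJ decomposition is either trivial -- so that $Y$ is Seifert fibered or hyperbolic -- or nontrivial, so that $Y$ is glued along essential tori from Seifert fibered and hyperbolic pieces. Consider first the Seifert fibered case. A Seifert fibered homology sphere is either $S^3$ or a Brieskorn sphere $\S(a_1,\dots,a_n)$ with $n \ge 3$ and the $a_i$ pairwise coprime, and $HF^+(\S(a_1,\dots,a_n))$ was computed by Ozsv\'ath-Szab\'o and by N\'emethi; one reads off that $HF^+_\red$ vanishes precisely for $S^3$ and $\S(2,3,5)$. Equivalently, the implication \eqref{it:1} $\Rightarrow$ \eqref{it:2} of Conjecture~\ref{conj:orderable} -- the theorem of Ozsv\'ath-Szab\'o that an L-space carries no taut foliation -- combined with the Eisenbud-Hirsch-Neumann and Jankins-Neumann classification of which Seifert fibered homology spheres carry a horizontal, hence taut, foliation, again excludes only $S^3$ and $\S(2,3,5)$. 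So a Seifert fibered L-space homology sphere is one of the two allowed manifolds.

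Next suppose $Y$ is a graph manifold, i.e., all its JSJ pieces are Seifert fibered. Here the full L-space conjecture is a theorem (Boyer-Clay; Hanselman-Rasmussen-Rasmussen-Watson), so such an L-space carries no taut foliation and, by the foliation classification for graph-manifold homology spheres, must again be $S^3$ or $\S(2,3,5)$; alternatively, one cuts along a JSJ torus, writes $Y$ as a splice -- or a more general torus gluing -- of two knot complements in smaller homology spheres, and uses the immersed-curve description of bordered Heegaard Floer homology to see directly that a nontrivial such gluing is never an L-space.

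The hard part -- and the reason the statement remains a conjecture -- is the case that some JSJ piece is hyperbolic, in particular the case that $Y$ itself is hyperbolic. Since an L-space carries no taut foliation, it would suffice to show that every hyperbolic homology sphere carries a taut foliation; under Conjecture~\ref{conj:orderable} it would equally suffice to exhibit a left-ordering of its fundamental group. No technique is known that achieves either in this generality, and $HF^+$ of a hyperbolic $3$-manifold cannot be computed in practice. A partial attack runs through Dehn surgery: a homology sphere that happens to be $\pm 1$-surgery on a knot $K \subset S^3$ must, if it is an L-space, satisfy the L-space surgery inequality $2g(K) - 1 \le 1$ of Ozsv\'ath-Szab\'o, hence $g(K) \le 1$; the only genus-$1$ L-space knots are the two trefoils, whose $\pm 1$-surgeries are, up to orientation, $\S(2,3,5)$ and $\S(2,3,7)$, of which only $\S(2,3,5)$ is an L-space. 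But a generic hyperbolic homology sphere is not $\pm 1$-surgery on any knot in $S^3$, and the mixed JSJ case is equally resistant, so this settles only a meager slice of cases. The general case seems to require genuinely new input -- plausibly a construction of taut foliations directly from the hyperbolic structure, or a left-orderability result extracted from $\mathrm{PSL}_2(\CC)$ character varieties.
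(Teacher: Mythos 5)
The statement you are addressing is presented in the paper as an open conjecture (attributed to Ozsv\'ath and Szab\'o), and the paper offers no proof of it; indeed, the paper explicitly notes that this conjecture, combined with part of Conjecture~\ref{conj:orderable}, would yield a new proof of the Poincar\'e conjecture. Your submission is therefore not, and does not claim to be, a proof: you yourself concede that ``the hard part --- and the reason the statement remains a conjecture --- is the case that some JSJ piece is hyperbolic,'' and that no known technique handles it. That concession is correct, and it is the genuine gap. The Seifert fibered case you dispose of is sound (the classification of horizontal foliations on Brieskorn spheres, or the direct $HF^+$ computations, do single out $S^3$ and $\S(2,3,5)$), and the graph manifold case is indeed now a theorem via the resolution of the L-space conjecture for graph manifolds --- though note that those results postdate this survey. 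But the hyperbolic and mixed-JSJ cases are not reduced to anything; the Dehn surgery observation you make covers only homology spheres realized as $\pm 1$-surgery on a knot in $S^3$, which is a measure-zero slice of the problem.

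As feedback: what you have written is an accurate and well-organized account of the known partial progress toward the conjecture, and it would be valuable as expository material. But be careful not to present a reduction-plus-open-case as a ``proof proposal'': a proof that terminates in ``the general case seems to require genuinely new input'' is a statement that the conjecture is open, which is exactly the status the paper assigns it. If you want to contribute here, the concrete open problem your analysis isolates is: show that every hyperbolic (or mixed) irreducible homology sphere other than $S^3$ and $\S(2,3,5)$ carries a taut foliation, or has left-orderable fundamental group, or has $HF^+_{\red} \neq 0$; any one of these (the first two modulo the relevant implication of Conjecture~\ref{conj:orderable}) would finish the argument.
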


Observe that the implication \eqref{it:3} $\Rightarrow$ \eqref{it:1} in Conjecture~\ref{conj:orderable},
together with Conjecture~\ref{conj:3-sphere}, would imply
the \emph{Poincar\'e conjecture}. Indeed, if $Y$ is a simply-connected 3-manifold, then it is an irreducible
homology sphere. Since $\pi_1(Y) = 1$ is not left-orderable, $Y$ is an $L$-space by Conjecture~\ref{conj:orderable}.
Using Conjecture~\ref{conj:3-sphere}, we get that $Y$ is homeomorphic to~$S^3$, as the Poincar\'e homology sphere
is not simply-connected.

Heegaard Floer homology has been particularly successful in tackling problems
on Dehn surgery. The main tool is the following surgery exact triangle.

\begin{theorem}
Let $K$ be a knot in the closed oriented 3-manifold $Y$, together with
framings $f$, $g \in H_1(\partial N(K))$ such that $m \cdot f = f \cdot g = g \cdot m = 1$,
where $m$ denotes the class of the meridian. Then there is an exact sequence
\[
\dots \longrightarrow \HFh(Y) \longrightarrow \HFh(Y_f(K)) \longrightarrow \HFh(Y_g(K)) \longrightarrow \dots
\]
\end{theorem}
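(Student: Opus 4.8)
The plan is to construct the exact triangle by exhibiting the three groups as the homologies of a single mapping-cone-type complex built from a suitable Heegaard diagram adapted to the three surgeries simultaneously. First I would pass from the closed manifolds $Y$, $Y_f(K)$, $Y_g(K)$ to a common picture: choose a Heegaard diagram $(\S,\alphas,\betas_1,\betas_2,\betas_3,z)$ in which the curves $\alphas$ and $\betas_2$ present $Y$, the pairs $(\alphas,\betas_i)$ for the appropriate relabeling present the two surgered manifolds, and the triple $(\betas_1,\betas_2,\betas_3)$ is obtained by a chain of small Hamiltonian isotopies together with a single handleslide realizing the slope changes $f,g,m$ on $\partial N(K)$. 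Concretely, away from the solid torus neighborhood of $K$ the $\betas_i$ agree, and inside one sees three curves of slopes $m$, $f$, $g$ on the boundary torus, pairwise intersecting once because $m\cdot f=f\cdot g=g\cdot m=1$; each pair therefore bounds a small triangle, and $\betas_1\cap\betas_2\cap\betas_3$ contributes a distinguished top-degree generator $\Theta_{ij}$ in each $\HFh(\#^k(S^1\times S^2))=\HFh(\S,\betas_i,\betas_j,z)$.

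Next I would assemble the chain maps. Counting holomorphic triangles with one vertex on each of $\alphas,\betas_i,\betas_j$, and inserting the top generator $\Theta_{ij}$ at the $\betas_i\betas_j$ vertex, defines maps
\[
f_{ij}\colon \widehat{CF}(\S,\alphas,\betas_i,z)\longrightarrow \widehat{CF}(\S,\alphas,\betas_j,z),
\]
and these are chain maps by the standard Gromov-compactness argument applied to the ends of the one-dimensional moduli spaces of triangles. The composites $f_{jk}\circ f_{ij}$ are null-homotopic: a null-homotopy is provided by counting holomorphic rectangles with vertices on $\alphas,\betas_i,\betas_j,\betas_k$, together with the observation that the relevant triple product of top-degree Floer homology classes $\Theta_{ij}\cdot\Theta_{jk}$ in $\HFh$ of the three-fold connected sum vanishes, because the $\betas$-curves are arranged to form a (genus-raising) piece on which this product is zero for grading reasons. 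Finally, one verifies that the mapping cone $\mathrm{Cone}(f_{12})$ is chain homotopy equivalent to $\widehat{CF}(\S,\alphas,\betas_3,z)$; this is the heart of the argument and is done by a filtration/spectral-sequence bookkeeping (the ``tautological'' cancellation of an acyclic subcomplex coming from the small triangles), after which the long exact sequence of the mapping cone, cyclically permuted, yields the stated triangle.

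The main obstacle is precisely this last step: showing that the third term of the cone is quasi-isomorphic to $\widehat{HF}$ of the remaining surgery, rather than merely fitting into an abstract exact sequence. This requires (i) an admissibility argument guaranteeing that all the multi-periodic domains in the quadruple diagram admit the positivity needed for the counts to be finite and for the homotopies to be defined, which one arranges by winding the $\betas_i$ along the $\alphas$-curves; (ii) a degeneration/neck-stretching analysis identifying the count of rectangles that degenerate into a triangle-times-triangle configuration, so that the homotopy identity holds on the nose; and (iii) a stabilization-invariance argument showing the construction does not depend on the chosen diagram, which follows once one knows, by the naturality results quoted above, that $\HFh$ is a well-defined invariant. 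Once the solid-torus-local model with three slopes meeting pairwise once is set up correctly, everything else is an instance of the general holomorphic-polygon machinery; the slope condition $m\cdot f=f\cdot g=g\cdot m=1$ is exactly what makes that local model cyclically symmetric, which is why the resulting sequence is a genuine triangle.
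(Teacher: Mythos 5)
The survey states this theorem as a black box (it is the Ozsv\'ath--Szab\'o surgery exact triangle) and contains no proof of it, so there is nothing in the paper to compare your argument against. What you have written is, in outline, the standard published proof: a Heegaard multi-diagram in which the meridian and the two framings appear as three curves on the boundary torus pairwise intersecting once, triangle maps decorated with top-degree generators $\Theta_{ij}$, null-homotopies of the composites coming from quadrilateral counts, and an identification of the third complex with the mapping cone of the first map. The architecture is right and the role of the condition $m \cdot f = f \cdot g = g \cdot m = 1$ is correctly identified.

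Two steps are not right as stated, and both sit exactly where the real content is. First, the vanishing of $\Theta_{ij}\cdot\Theta_{jk}$ in $\HFh(\#^{g-1}(S^1\times S^2))$ is \emph{not} a grading argument: the degree in which this product lands is nonzero (it is spanned by the top generator), so grading only tells you the product is an integer multiple of $\Theta_{ik}$. The actual reason it vanishes is a model computation in the genus-one piece: the Maslov index zero homotopy classes of triangles with nonnegative domains joining the two top generators to the top intersection point of $\betas_i\cap\betas_k$ come in pairs (the two ``small'' triangles on either side of the hexagonal region), each with a unique holomorphic representative, and these cancel in sign. Second, the identification of the cone of $f_{12}$ with $\widehat{CF}(\S,\alphas,\betas_3,z)$ is not obtained by directly cancelling an acyclic subcomplex of the cone; the standard route is the exact triangle detection lemma, whose hypothesis --- that the map built from $f_{31}$, $f_{12}$ and the two null-homotopies is chain homotopic to the identity --- is verified by counting holomorphic \emph{pentagons} in a quintuple diagram together with the smallest-rectangle degeneration you allude to. So your outline is missing one level of polygon counting beyond the rectangles you invoke, and the ``filtration/spectral-sequence bookkeeping'' needs to be replaced by that lemma. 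With those two repairs, your sketch is the correct standard argument.
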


In particular, when $Y$ is a homology 3-sphere, then we can take $Y_f(K) = Y_0(K)$ and $Y_g(K) = Y_1(K)$.

The following theorem was originally proved by Kronheimer, Mrowka, Ozsv\'ath, and Szab\'o~\cite{KMOS} using
monopole Floer homology, but now we know that is isomorphic to~$HF$.

\begin{theorem}
If the knot $K \subset S^3$ is not the unknot, then
\[
S^3_{p/q}(K) \not\approx \RR P^3.
\]
\end{theorem}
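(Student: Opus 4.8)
The plan is to reprove the statement within Heegaard Floer homology, which is legitimate now that $HF^+ \cong \widecheck{HM}$. The first step is purely homological: $|H_1(S^3_{p/q}(K))| = |p|$, so $S^3_{p/q}(K) \approx \RR P^3$ forces $|p| = 2$. Since $\RR P^3 = L(2,1)$ admits an orientation-reversing self-diffeomorphism and $S^3_{-p/q}(K) \cong -S^3_{p/q}(\ol K)$ for the mirror knot $\ol K$, after replacing $K$ by $\ol K$ if necessary I may assume the slope is $2/q$ with $q \ge 1$; here $q$ is odd, since $\gcd(2,q) = 1$. Finally, $\RR P^3$ is a lens space, hence an L-space: $\text{rk}\, \HFh(\RR P^3) = 2 = |H_1(\RR P^3)|$. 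So the task reduces to classifying the knots admitting an L-space as a positive surgery of slope $2/q$.

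The essential input is the structure theory of L-space knots, which ultimately rests on the mapping cone formula computing $\HFh$ of a surgery from the knot filtration, i.e.\ the Heegaard Floer refinement of the surgery exact triangle above. If $S^3_{2/q}(K)$ is an L-space with $2/q > 0$, then $K$ is an L-space knot and the slope must satisfy $2/q \ge 2g(K) - 1$. As $q \ge 1$, this gives $2g(K) - 1 \le 2$, so $g(K) \le 1$. If $g(K) = 0$, then $K$ bounds a disc and is the unknot, contrary to hypothesis; the relevant genus can also be read off from the fact, quoted above, that $\HFK$ detects the Seifert genus.

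It remains to treat $g(K) = 1$. An L-space knot satisfies $\HFK_*(K,g(K)) \cong \ZZ$, so by the fibredness-detection theorem $K$ is fibred; a genus-one fibred knot in $S^3$ is a trefoil or the figure-eight knot, and the figure-eight is excluded because its Alexander polynomial $t - 3 + t^{-1}$ has a nonzero coefficient different from $\pm 1$, whereas the nonzero coefficients of the Alexander polynomial of an L-space knot all equal $\pm 1$. Hence $K$ is a trefoil, and since it admits a \emph{positive} L-space surgery it is the right-handed trefoil $T_{2,3}$. Now $q$ odd together with $2/q \ge 2g(K) - 1 = 1$ forces $q = 1$, so the whole problem reduces to the single claim $S^3_2(T_{2,3}) \not\approx \RR P^3$. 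This I would settle by correction terms: feeding the known knot Floer complex of $T_{2,3}$ into the Ozsv\'ath--Szab\'o integer surgery formula gives correction terms $-7/4$ and $-1/4$ for $S^3_2(T_{2,3})$, which are incompatible with the values $1/4$ and $-1/4$ of $\RR P^3$. Alternatively, Moser's classification of surgeries on torus knots shows that a slope-$a/b$ surgery on $T_{2,3}$ is a lens space only when $|6b - a| = 1$, which fails for $2/1$; thus $S^3_2(T_{2,3})$ is Seifert fibred over $S^2(2,3,4)$ and is not even a lens space.

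The main obstacle is concentrated in the second step: the genus inequality $p/q \ge 2g(K) - 1$ for L-space surgeries and the attendant constraints on L-space knots (fibredness, and the $\pm 1$ coefficients of the Alexander polynomial). These are precisely the hard facts that Kronheimer, Mrowka, Ozsv\'ath, and Szab\'o originally established on the monopole side; granting them, the rest of the argument invokes only the genus- and fibredness-detection statements already recorded in this survey, the classical classification of genus-one fibred knots, and a short explicit computation for the trefoil. Running the identical scheme with $\RR P^3$ replaced by an arbitrary lens space $L(p,q)$, and comparing correction terms against $L(p,q)$, one recovers the full statement that if $S^3_{p/q}(K)$ is orientation-preservingly diffeomorphic to the $p/q$-surgery on the unknot, then $K$ is the unknot.
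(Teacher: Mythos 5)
Your argument is correct, but it follows a genuinely different route from the one the paper points to. The survey offers no proof of its own here: it attributes the theorem to Kronheimer--Mrowka--Ozsv\'ath--Szab\'o, whose original argument runs through the surgery exact triangle in monopole Floer homology together with their non-vanishing theorem, and predates the L-space--knot formalism you use. Your proof instead packages the problem through the modern surgery machinery: the reduction to $|p|=2$ and a positive slope, the rational L-space surgery bound $p/q \ge 2g(K)-1$, the structural constraints on L-space knots (fibredness via $\HFK_*(K,g)\cong\ZZ$ and the $\pm 1$ Alexander coefficients), the classical classification of genus-one fibred knots, and finally either Moser's theorem or a correction-term computation for $S^3_2(T_{2,3})$ -- all of which check out, including the values $\{-7/4,-1/4\}$ versus $\{1/4,-1/4\}$, and the use of the orientation-reversing self-diffeomorphism of $L(2,1)$ to justify the mirroring step. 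Two small caveats. First, the deep inputs you invoke are not the ones ``established on the monopole side'' by KMOS: the genus bound and the L-space--knot constraints come from the Ozsv\'ath--Szab\'o lens-space-surgery and rational-surgery papers and from Ghiggini--Ni (fibredness detection), which rest on Gabai plus Eliashberg--Thurston plus the contact invariant (or the sutured Floer argument in this survey) rather than on KMOS -- which is also why your argument is not circular. Second, your closing remark that the ``identical scheme'' recovers the full KMOS theorem for arbitrary $L(p,q)$ is only right via the correction-term comparison (forcing all $V_i=0$, hence $g=0$); the genus-bound-and-enumerate step does not reduce to finitely many knots once $p/q$ is large. What your approach buys is modularity and transparency at the cost of importing heavier black boxes; what the KMOS approach buys is a self-contained argument from the exact triangle and the non-vanishing theorem alone.
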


More generally, they obtain the following.

\begin{theorem}
Let $K$ be a knot in $S^3$.
If there is an orientation preserving homeomorphism between $S^3_{p/q}(K)$ and the lens-space $S^3_{p/q}(U)$, then $K = U$.
\end{theorem}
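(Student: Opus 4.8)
\emph{Proof idea.} Since $S^3_{p/q}(K)$ depends only on the slope, and reversing orientation carries $S^3_{p/q}(K)$ to $S^3_{-p/q}(\ol K)$ and $S^3_{p/q}(U)$ to $S^3_{-p/q}(U)$, we may assume $p,q>0$; it then suffices to show that $K$ is the unknot. The key starting point is that $S^3_{p/q}(U)=L(p,q)$ is a lens space, hence an $L$-space, so the hypothesis says precisely that $S^3_{p/q}(K)$ is orientation-preservingly homeomorphic to $L(p,q)$. In particular $S^3_{p/q}(K)$ is an $L$-space, and the multiset of its correction terms coincides with that of $L(p,q)$. The plan is to use the $L$-space condition to pin down $\HFK(K)$ up to its Alexander polynomial, then use the equality of correction terms to force $\Delta_K(t)=1$, and finally conclude $K=U$ by genus detection.

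First I would promote the surgery hypothesis to a statement about $K$ alone. Running $K$ through the surgery exact triangle along a chain of slopes interpolating between $\infty$ and $p/q$ (successive triples forming ``triangles'' in the Farey tessellation), and using the rank inequality $\mathrm{rk}\,\HFh(Y)\le\mathrm{rk}\,\HFh(Y_f(K))+\mathrm{rk}\,\HFh(Y_g(K))$ coming from the triangle together with the minimality of ranks for $L$-spaces, one shows that $S^3_n(K)$ is an $L$-space for all large integers $n$; that is, $K$ is an \emph{$L$-space knot}. I would then invoke the structure theorem for such knots (Ozsv\'ath--Szab\'o): $\HFK(K)$ is free abelian and is completely determined by $\Delta_K(t)$, which has the ``staircase'' form
\[
\Delta_K(t)=(-1)^m+\sum_{i=1}^{m}(-1)^{m-i}\bigl(t^{n_i}+t^{-n_i}\bigr),\qquad 0<n_1<\cdots<n_m,
\]
every nonzero coefficient of $\Delta_K$ is $\pm1$, one has $g(K)=\deg\Delta_K=n_m$, and the torsion coefficients $t_i(K)=\sum_{j\ge1}j\,a_{i+j}$, where $a_j$ is the coefficient of $t^{j}$ in $\Delta_K$, form a nonincreasing sequence of nonnegative integers.

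Next I would compare the two sets of correction terms. For a positive slope and an $L$-space knot, the rational-surgery mapping-cone formula of Ozsv\'ath--Szab\'o computes $HF^+$ of $S^3_{p/q}(K)$ out of $\HFK(K)$, and the upshot for the $d$-invariants is a relation of the shape
\[
d\bigl(S^3_{p/q}(K),\s\bigr)=d\bigl(L(p,q),\s\bigr)-2\,t_{k(\s)}(K),
\]
where the index $k(\s)$ ranges over $\{0,1,\dots,\lfloor(p-1)/q\rfloor\}$ as $\s$ ranges over $\spinc(S^3_{p/q}(K))\cong\ZZ/p$, and equals $0$ for at least one $\s$. Because the homeomorphism preserves orientation, the left-hand side runs over the same multiset as $d(L(p,q),\cdot)$; since every subtracted term is $\ge0$, their total must vanish, so $t_0(K)=0$, and the monotonicity from the previous step then forces $t_i(K)=0$ for every $i$. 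An elementary computation ($t_i-2t_{i+1}+t_{i+2}=a_{i+1}$, together with $\Delta_K(1)=1$) shows this means $a_j=0$ for all $j\ge1$, i.e.\ $\Delta_K(t)=1$, whence $\HFK(K)\cong\ZZ$ supported in Alexander grading $0$. Finally, genus detection gives $g(K)=\max\{\,i:\HFK_*(K,i)\ne0\,\}=0$, so $K$ bounds an embedded disk and $K=U$.

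The main obstacle is not the surgery exact triangle or genus detection, which I would use as black boxes, but the two structural inputs. The first is the full $L$-space-knot dichotomy --- that an $L$-space surgery forces $CFK^\infty(K)$ to be a staircase complex --- together with the propagation of the $L$-space property along a continued-fraction chain of surgeries. The second is the rational-surgery formula and the resulting closed expression for the $d$-invariants in terms of the torsion coefficients; here the delicate point is keeping the $\spinc$-labellings aligned on the two sides so that the multiset comparison is actually legitimate. (Historically, Kronheimer, Mrowka, Ozsv\'ath, and Szab\'o carried out this argument on the monopole side, where the Fr{\o}yshov-type $h$-invariants play the role of the $d$-invariants.)
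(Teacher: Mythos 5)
The paper itself offers no proof of this statement: it is quoted as a theorem of Kronheimer, Mrowka, Ozsv\'ath, and Szab\'o, proved in~\cite{KMOS} on the monopole side. So the right comparison is with that original argument, and your proposal takes a genuinely different (and by now standard) Heegaard Floer route. Your argument is correct as outlined: the reduction to $p/q>0$ by mirroring, the propagation of the $L$-space condition to large integer surgeries via Farey triangles and the rank inequality from the exact triangle, the Ozsv\'ath--Szab\'o structure theorem for $L$-space knots, the identification of the $V_s$-invariants with the torsion coefficients $t_s(K)$, the Ni--Wu-type $d$-invariant formula for $p/q$-surgery, and the multiset/sum trick that kills all the $t_i$ (which, as you note, neatly sidesteps the $\spinc$-labelling issue) all fit together, and $t_i-2t_{i+1}+t_{i+2}=a_{i+1}$ then gives $\Delta_K=1$, hence $g(K)=0$ by genus detection. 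The original proof in~\cite{KMOS} is organized differently: it also propagates simplicity through the surgery triangle, but it must additionally track the absolute gradings (Fr{\o}yshov invariants) through the triangle, and its detection step does not use a structure theorem for knot Floer homology of $L$-space knots (which did not yet exist); instead it feeds Gabai's taut foliation on $S^3_0(K)$ through Eliashberg--Thurston and the symplectic non-vanishing theorem to show that a non-trivial knot's large surgeries cannot have the Floer homology of the corresponding lens spaces. Your route buys a cleaner, more algebraic endgame at the cost of two heavy black boxes (the staircase theorem and the rational surgery/mapping-cone formula); the original buys independence from those inputs at the cost of the foliation/contact-geometric non-vanishing machinery. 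Two small points to tidy up: the degenerate slope $p=0$ should be excluded or handled by Property~R, and you should state the $d$-invariant formula with the $\max(V_{\lfloor i/q\rfloor},H_{\lfloor(i-p)/q\rfloor})$ on the right; since both entries are torsion coefficients of an $L$-space knot, your vanishing argument is unaffected.
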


The lens space realization problem, asking which lens-spaces can be obtained by Dehn-surgery along
a non-trivial knot in~$S^3$, has recently been settled via Heegaard Floer homology by Greene~\cite{Greene}.
Note that the cyclic surgery theorem of Culler, Gordon, Luecke, and Shalen~\cite{cyclic-surgery} ensures that if a surgery on
a non-trivial knot yields a lens space, then the surgery coefficient has to be an integer.
It is still an open question of Berge exactly which knots yield lens space surgeries. These are conjectured to be
the doubly primitive knots that were classified by Berge. What Greene showed is that the lens spaces
that can be realized by surgery on a non-trivial knot are exactly the ones obtained by surgery on a Berge
knot. He also exhibits that such a knot has to have the same knot Floer homology as a Berge knot.

Another area where Heegaard Floer homology has been very successful is deciding whether a knot
has unknotting number one. Given a knot~$K$ in~$S^3$, we denote by~$u(K)$ its \emph{unknotting number},
which is the minimal number of times $K$ intersects itself during a regular homotopy to the unknot.
Currently, no algorithm is known for computing $u(K)$, or even to decide whether $u(K) = 1$.
Of course, it is easy to give an upper bound on $u(K)$ by exhibiting a concrete unknotting sequence.
A classical lower bound is provided for example by the knot signature.
It is easy to prove that $g^*(K) \le u(K)$, and hence $|\tau(K)|$ also provides a lower bound.
However, $g^*(K)$ can be zero, while $u(K)$ is large.
Note that recent work of Lackenby (in progress) gives an algorithm for telling whether a non-split link with at least
two components and satisfying some mild restrictions has \emph{unlinking number} one.

The main tool for detecting unknotting one knots is the \emph{Montesinos trick}.
Given a knot~$K$ in~$S^3$, let $\Sigma(K)$ denote the double cover of~$S^3$,
branched along~$K$. If $u(K) = 1$, then there exists a knot~$C$ in~$S^3$ and an integer $n$
such that
\[
\Sigma(K) = S^3_{n/2}(C).
\]
Now the surgery exact triangle can be used to give an obstruction to $\Sigma(K)$
being a half-integral Dehn surgery on a knot. Using this method,
Ozsv\'ath and Szab\'o~\cite{OSz11} showed that, for example, $u(8_{10}) = 2$.

The links~$L$ and~$L'$ in~$S^3$ are related by \emph{Conway mutation} if
there exists an embedded 2-sphere $S \subset S^2$ that intersects~$L$ in
four points $L \cap S = L' \cap S$, and we can obtain~$L'$ from $L$ by removing the ball bounded by~$S$ and
regluing it using an involution that fixes two points on~$S$. Viro showed that if
$L$ and $L'$ are mutants, then $\Sigma(L) \approx \Sigma(L')$. Greene~\cite{mutation} showed that the converse
also holds for alternating links. More precisely, he proved that the following are equivalent:
\begin{enumerate}
\item \label{it:mutants} $L$ and $L'$ are mutants,
\item \label{it:cover} $\S(L) \approx \S(L')$,
\item $\HFh(\S(L)) \cong \HFh(\S(L'))$ as absolutely graded, relatively $\Spinc$-graded groups.
\end{enumerate}
The proof that~\eqref{it:mutants} and~\eqref{it:cover} are equivalent passes through $HF$ in an essential way.

The correction terms $d(Y,\s)$, introduced by Ozsv\'ath and Szab\'o~\cite{OSz15},
have numerous important applications.
For example, they provide alternate proofs of Donaldson's diagonalizability
theorem and the Thom conjecture for $\CC P^2$.
Due to lack of space, we do not pursue this 4-manifold topological direction any further.

Finally, we mention some of the results in contact topology obtained using $HF$.
First a few definitions.
Given a contact 3-manifold $(Y,\xi)$, we say that $\Delta \subset Y$ is an
\emph{overtwisted disk} if $\xi$ is tangent to $\Delta$ along $\partial \Delta$.
The contact structure $\xi$ is called \emph{overtwisted} if it contains an
overtwisted disk, and is \emph{tight} otherwise. By a result of Eliashberg,
overtwisted contact structures satisfy an h-principle, and there is a unique
one up to isotopy in each homotopy class of oriented 2-plane fields.
In general, it is very difficult to classify tight contact structures on 3-manifolds.

We say that $(Y,\xi)$ is \emph{Stein fillable} if $Y$ bounds a complex surface $S$
such that the two-plane field~$\xi$ consists of complex lines. Furthermore, $S$ admits a Morse function $f \colon S \to I$
with $Y = \partial S = f^{-1}(1)$, and for every regular value $t$ of $f$,
the complex tangencies to $f^{-1}(t)$ form a contact structure. If $(Y,\xi)$ is
Stein fillable, then it is necessarily tight.

Given a contact 3-manifold $(Y,\xi)$, Ozsv\'ath and Szab\'o~\cite{OSz4}
assign to it an invariant $c(\xi) \in \HFh(-Y)$ using open book decompositions.
They proved that if $(Y,\xi)$ is overtwisted, then $c(\xi) = 0$.
In other words, $c(\xi)$ can be used to detect tightness. However, the
converse is not true, as shown by Ghiggini~\cite{tight, fillability}: there are tight contact
structures with $c(\xi) = 0$. But at least we have the following non-vanishing result of
Ozsv\'ath and Szab\'o: if $\xi$ is Stein fillable, then $c(\xi) \neq 0$.

The above result has been strengthened by Ghiggini~\cite{fillability}. We say that $(Y,\xi)$
is \emph{strongly symplectically fillable} if there is a contact form $\a$
such that $\xi = \ker(\a)$ and a symplectic 4-manifold $(X,\omega)$
such that $\partial X = Y$ and $\omega|_Y = d\a$. For example, every Stein
fillable contact 3-manifold is strongly symplectically fillable.
Furthermore, the contact 3-manifold $(Y,\xi)$ is called \emph{weakly symplectically fillable} if there is a
symplectic manifold $(X,\omega)$ such that $\partial X = Y$ and $\omega|_{\xi} > 0$.
Every strongly fillable contact manifold is weakly fillable, and
every weakly fillable contact manifold is tight.
Ghiggini showed that if $\xi$ is strongly symplectically fillable,
then $c(\xi) \neq 0$. He also proved that for a particular family $\{(Y,\xi_n) \colon n \in \ZZ_+\}$ of
weakly symplectically fillable contact 3-manifolds constructed by Giroux,
one has $c(\xi_n) = 0$ for every $n \in \ZZ_+$. In particular, each $(Y,\xi_n)$ is weakly fillable (and hence tight),
but not strongly fillable (and hence not Stein fillable).
It was also Ghiggini~\cite{strong-Stein} who constructed the first family of strongly fillable but
not Stein fillable contact structures, and this work relies on Heegaard Floer homology as well.

Using the contact invariant, Lisca and Stipsicz~\cite{LS}
proved that every closed, oriented Seifert fibred 3-manifold carries a tight contact structure,
except if it arises as $(2n-1)$-surgery on the torus knot $T_{2,2n+1}$.
Furthermore, Ghiggini, Lisca, and Stipsicz~\cite{GLS} used the contact invariant
to classify all tight contact structures on certain small Seifert fibred spaces.

\section{Lagrangian Floer homology}

The machinery underlying the construction of Heegaard Floer homology is Lagrangian
intersection Floer theory, first introduced by Floer himself
\cite{Floer}.  We will follow the threatment of Fukaya et
al.~\cite[Chapter~2]{FOOO}.

First, recall that a \emph{symplectic $\RR$-vector space}
is a pair $(V,\omega)$, where $V$ is an
$\RR$-vector space and $\omega$, called the \emph{symplectic form},
is a non-degenerate anti-symmetric
bilinear form on $V$. The existence of such an~$\omega$
implies that $V$ is necessarily of some even dimension $2n$.  A
subspace $W$ of $V$ is called \emph{Lagrangian} if it is of dimension $n$, and
$\omega(v,w) = 0$ for every $v$, $w \in W$. If $V = \RR^{2n}$ with
standard basis $(v_1,\dots,v_{2n})$, then there is a canonical
symplectic form $\omega_0$ such that $\omega_0(v_k,v_{n+k}) = 1$ and
$\omega_0(v_{n+k},v_k) = -1$ for every $k \in \{\,1,\dots, n \,\}$,
and such that~$\omega_0$ vanishes for every other pair of basis
vectors.  We denote the Grassmannian of Lagrangian subspaces in
$(\RR^{2n},\omega_0)$ by~$\Lambda(n)$. Note that $\Lambda(n) =
U(n)/O(n)$, and $\pi_1(\Lambda(n)) = \ZZ$.

A \emph{symplectic manifold} is a pair $(M,\omega)$ such that $M$ is a
$2n$-manifold for some positive integer $n$, and $\omega$ is a closed
non-degenerate differential 2-form on $M$; i.e., $d\omega = 0$ and
$\omega^n$ is nowhere zero. In other words, $(T_pM, \omega_p)$ is a
symplectic vector space for every point $p \in M$ and $d\omega = 0$.
We say that $L$ is a \emph{Lagrangian submanifold} of $(M,\omega)$ if $L$ is
a smooth submanifold of $M$ such that $T_pL$ is a Lagrangian subspace
of $(T_pM,\omega_p)$ for every $p \in L$.

Note that every symplectic vector bundle over $S^1$ is symplectically
trivial; i.e., isomorphic to the product $S^1 \times \RR^{2n}$ with
symplectic form $\omega_0$ on each fiber.  Hence, given a curve
$\gamma \colon S^1 \to L$, the bundle $\gamma^*TM$ is symplectically
trivial; let
\[
\Phi \colon \gamma^* TM \to S^1 \times \RR^{2n}
\]
be one such trivialization. We denote by $p$ the projection from $S^1 \times
\RR^{2n} \to \RR^{2n}$.  For every $\tau \in S^1$, consider the
Lagrangian subspace $p \circ \Phi(T_{\gamma(\tau)}L)$ of
$(\RR^{2n},\omega_0)$, this is an element of $\Lambda(n)$.  So we have
obtained a loop $S^1 \to \Lambda(n)$ whose homotopy class, considered
as an element of $\ZZ$, is called the Maslov index of $\gamma$ in the
trivialization~$\Phi$, and is denoted by $\mu_\Phi(\gamma)$.

This gives rise to the Maslov index homomorphism
\[
\mu_L \colon \pi_2(M,L) \to \ZZ.
\]
Given a continuous map $u \colon (D^2,\partial D^2) \to (M,L)$,
we let $\mu_L([u]) = \mu_{\Phi}(\g)$, where $\g= u|_{\partial D^2}$,
and $\Phi$ is the trivialization of~$\gamma^* TM$ that extend to~$u^* TM$.

Suppose we have a symplectic $2n$-manifold $(M,\omega)$, together
with two compact and connected Lagrangian submanifolds $L_0$ and
$L_1$ that intersect transversely.
Then fix a path $\ell_0$ from $L_0$ to $L_1$. Let
$\Omega(L_0,L_1)$ be the space of paths $\ell \colon I \to M$ such
that $\ell(0) \in L_0$ and $\ell(1) \in L_1$, and we denote by
$\Omega_{\ell_0}(L_0,L_1)$ the component of $\ell_0$.  The universal
covering space of $\Omega_{\ell_0}(L_0,L_1)$ is homeomorphic to the
space of pairs $(\ell,[w])$, where $\ell \in \Omega_{\ell_0}(L_0,L_1)$
and $[w]$ is a homotopy class of maps $w \colon I \times I \to M$ such
that $w(0,t) = \ell_0(t)$ and $w(1,t) = \ell(t)$ for every $t \in I$,
while $w(\tau,0) \in L_0$ and $w(\tau,1) \in L_1$ for every $\tau \in
I$.

Next, we define another covering space of $\Omega_{\ell_0}(L_0,L_1)$.
Let $c \colon S^1 \to \Omega(L_0,L_1)$ be a closed loop. This can be
thought of as a map $c \colon S^1 \times [0,1] \to M$ such that $c_0 =
c(\cdot,0)$ is a curve on $L_0$ and $c_1 = c(\cdot,1)$ is a curve on
$L_1$.  Since every symplectic vector bundle over $S^1$ is trivial,
the bundle $c^* TM$ is symplectically trivial. Fix such a
trivialization $\Psi \colon c^*TM \to S^1 \times I \times \RR^{2n}$,
and let $\Psi_i$ be the induced trivialization of $c_i^*TM$ for $i \in
\{0,1\}$.  Then one can show that $\mu_{\Psi_0}(c_0) -
\mu_{\Psi_1}(c_1)$ is independent of the choice of trivialization
$\Psi$ and only depends on the homotopy class of the loop~$c$; we
denote it by $\mu(c)$. In fact, it defines a homomorphism
\[
\mu \colon \pi_1(\Omega_{\ell_0}(L_0,L_1), \ell_0) \to \ZZ.
\]

Given two paths $w$ and $w'$ in $\Omega_{\ell_0}(L_0,L_1)$ from
$\ell_0$ to $\ell$, we denote by $w \# \ol{w}'$ the concatenation of
$w$ with the reverse of $w'$. This is a closed loop in
$\Omega_{\ell_0}(L_0,L_1)$ based at $\ell_0$. We say that $(\ell,w)$
and $(\ell,w')$ are equivalent, and write $(\ell,w) \sim (\ell, w')$, if
$\int_w \omega = \int_{w'} \omega$; i.e., if $\int_{w \# \ol{w}'} \omega
= 0$, and if $\mu(w \# \ol{w}') = 0$.  We denote by $[\ell,w]$ the
equivalence class of $(\ell,w)$.  Then the space
$\widetilde{\Omega}_{\ell_0}(L_0,L_1)$ is defined to be the set of
equivalence classes of pairs $(\ell,w)$; this is a covering space of
$\Omega_{\ell_0}(L_0,L_1)$.

Loosely speaking, Lagrangian intersection Floer homology of the pair
$(L_0,L_1)$ is the homology of the path space
$\widetilde{\Omega}_{\ell_0}(L_0,L_1)$. One can compute the homology
of a finite dimensional manifold $M$ using Morse theory: First, one
has to choose a Morse function $f$ and a Riemannian metric on $M$
whose gradient satisfies the Morse-Smale condition. The chain complex
is freely generated by the critical points of $f$, and the
differential counts gradient flow-lines between critical points of
index difference one. However, on an infinite dimensional manifold, it
does not make sense to talk about the Morse index, and there are other
technical difficulties.  It was Floer's observation that in our case
it suffices to define the index difference for a pair of critical
points, and one can still obtain homology groups under certain
hypothesis on the pair $(L_0,L_1)$. These hypothesis were relaxed by
Fukaya et al.~\cite{FOOO}, but in some cases the differential does not
square to zero, and the theory is obstructed in an essential way.

For our ``Morse function,'' we take the functional $\cA = \cA_{\ell_0}
\colon \widetilde{\Omega}_{\ell_0}(L_0,L_1) \to \RR$, given by the
formula
\[
\cA([\ell,w]) = \int w^* \omega.
\]
The critical points of $\cA$ correspond to the intersection points of
$L_0$ and $L_1$; i.e., they are of the form $[\ell_p,w]$, where
$\ell_p \colon I \to M$ is the constant path at some $p \in L_0 \cap
L_1$.  We denote by $C(\cA)$ the set of critical points of $\cA$.

To define a metric on $\wt{\Omega}_{\ell_0}(L_0,L_1)$, we first choose
a $t$-dependent family $J = \{\, J_t \colon t \in I \,\}$ of almost
complex structures on $M$ compatible with $\omega$, and consider the
induced family of Riemannian metrics $g_J = \omega(\cdot, J \cdot)$ on
$M$. This will, in turn, induce an $L^2$-metric on
$\wt{\Omega}_{\ell_0}(L_0,L_1)$ by
\[
\langle \xi, \eta \rangle = \int_0^1 g_{J_t}(\xi(t),\eta(t)) \, dt,
\]
where $\xi$ and $\eta$ are vector fields along some path $\ell \in \Omega_{\ell_0}(L_0,L_1)$.

It turns out that the moduli space of gradient flow-lines of $\cA$
between critical points $[\ell_p,w]$ and $[\ell_q,w']$ is the set
$\cM_J([\ell_p,w],[\ell_q,w'])$ of maps $u \colon \RR \times I \to M$
such that
\begin{enumerate}
\item $u(\RR \times \{j\}) \subset L_j$ for $j \in \{0,1\}$,
\item $\partial u/ \partial \tau + J(t,u) \partial u / \partial t = 0$ (i.e.; $u$ is \emph{J-holomorphic}),
\item $\lim_{\tau \to -\infty} u(\tau,t) = p$ and $\lim_{\tau \to +
    \infty} u(\tau, t) = q$,
\item $w \# u \sim w'$, where the concatenation is taken along
  $\ell_p$.
\end{enumerate}

Note that the $t$-dependence of $J$ is to ensure that the above moduli
spaces are cut out transversely, and are hence smooth manifolds.  From
now on, we will suppress~$J$ in the notation whenever possible.
Observe that $\RR \times I$ is conformally equivalent with $D^2 \setminus \{\pm i\}$.
So, instead, one can consider $J$-holomorphic Whitney
disks $u \colon D^2 \to M$ connecting~$p$ and~$q$ (i.e,
$u(\partial D^2 \cap \{\Re \le 0\}) \subset L_0$, $u(\partial D^2 \cap \{\Re \ge 0\})
\subset L_1$, $u(-i) = p$, and $u(i) = q$)
in the relative homotopy class given by $w \# u \sim w'$.
This is often the viewpoint taken in Heegaard Floer homology.

We next define a consistent grading on $C(\cA)$. For this, we have to
fix a section~$\lambda^0$ of $\ell_0^* \Lambda M$, where $\Lambda M$
is the bundle of Lagrangian Grassmanians of $TM$, such that
$\lambda^0(0) = T_{\ell_0(0)}L_0$ and $\lambda^0(1) = T_{\ell_0(1)}
L_1$.  Let $[\ell_p,w] \in C(\cA)$ be an element corresponding to the
intersection point $p \in L_0 \cap L_1$. Choose a trivialization
\[
\Phi \colon w^* TM \to I^2 \times \RR^{2n} \to \RR^{2n}
\]
such that
\begin{itemize}
\item $\Phi(0,t) \circ w^*(\lambda^0(t)) \equiv \RR^n$,
\item $\Phi(1,0) \circ w^*(T_pL_0) = \RR^n$, and
\item $\Phi(1,1) \circ w^*(T_pL_1) = i \cdot\RR^n$.
\end{itemize}
This $\Phi$ will induce a loop $\lambda_\Phi \colon \partial I^2 \to
\Lambda(n)$, such that
\begin{itemize}
\item $\lambda_\Phi(s,0) = \Phi \circ w^*(T_{w(s,0)} L_0)$,
\item $\lambda_\Phi(1,t) = e^{\frac{\pi i t}{2}}
  \RR^n$,
\item $\lambda_\Phi(s,1) = \Phi \circ w^*(T_{w(s,1)} L_1)$, and
\item $\lambda_\Phi(0,t) = \RR^n$
\end{itemize}
for every $s$, $t \in I$.
We define the degree of $[\ell_p,w]$ by
\[
\mu_{\lambda_0}([\ell_p,w]) = \mu(\lambda_\Phi).
\]
The above number is independent of the choice of trivialization
$\Phi$.  So we obtain a grading on $C(\cA)$, provided the based path
$\ell_0$ and the section $\lambda_0$ of $\Lambda M$ over $\ell_0$ are
chosen.

We say that the pair of Lagrangians $(L_0,L_1)$ is \emph{relatively spin}
if there is a class $st \in H^2(M;\ZZ_2)$ such that $st|_{L_i} = w_2(L_i)$
for $i = 0$, $1$. A \emph{relative spin structure} on $(L_0, L_1)$
consists of a class $st \in H^2(M ;\ZZ_2)$, an oriented vector bundle~$\xi$ over the 3-skeleton
of~$M$ such that $w_2(\xi) = st$, and $\text{Spin}$-structures on $TL_i \oplus \xi$
over the 2-skeleton of $L_i$.

The following transversality result was obtained by Floer~\cite{Floer}, with some
missing details filled in by Oh~\cite{Oh3, Oh}.

\begin{theorem}
  Let $[\ell_p,w]$, $[\ell_q,w']$ be critical points of the functional
  $\cA$.  Then the space $\cM([\ell_p,w],[\ell_q,w'])$ is a smooth
  manifold of dimension $\mu([\ell_p,w]) - \mu([\ell_q,w'])$. If we
  also assume that the pair $(L_0,L_1)$ is relatively spin, then the
  space will carry an orientation.
\end{theorem}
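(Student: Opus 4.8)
The plan is to exhibit $\cM([\ell_p,w],[\ell_q,w'])$ as the zero set of a Fredholm section of a Banach bundle, identify its index with the difference of the gradings, and then perturb the almost complex structures to make the section transverse. First I would fix the analytic setup: choose $p > 2$ and let $\mathcal{B}$ be the Banach manifold of $W^{1,p}_{\mathrm{loc}}$ maps $u \colon \RR \times I \to M$ with $u(\RR \times \{j\}) \subset L_j$, lying in the relative homotopy class determined by $w \# u \sim w'$, and converging to the constant paths at $p$ and $q$ as $\tau \to \mp\infty$; this convergence is encoded by weighted Sobolev norms on the two strip-like ends, which is legitimate because $L_0$ and $L_1$ intersect transversely, so $p$ and $q$ are nondegenerate critical points of $\cA$. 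Over $\mathcal{B}$ sits the Banach bundle $\mathcal{E}$ whose fiber is the completion of $\Omega^{0,1}(u^*TM)$ in the same weighted $W^{0,p}$ norm, and the operator $u \mapsto \bar\partial_{J} u = \tfrac12\bigl(du + J_t(u)\circ du \circ i\bigr)$ defines a smooth section whose zero set is $\cM([\ell_p,w],[\ell_q,w'])$ (after elliptic regularity, which I return to below, the zeros are genuine smooth maps).

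Second, at a zero $u$ the vertical differential $D_u$ of $\bar\partial_J$ is a real-linear Cauchy-Riemann operator on $u^*TM$ with totally real boundary conditions along $TL_0$ and $TL_1$ and with nondegenerate asymptotic operators at the two ends. By the standard theory of such operators on the strip (Lockhart-McOwen type estimates, as in Floer's original argument with the details completed by Oh), $D_u$ is Fredholm once the exponential weights are chosen small. To compute its index I would invoke the Robbin-Salamon form of the Maslov index theorem for Cauchy-Riemann operators: after trivializing $u^*TM$ by a trivialization $\Phi$ compatible with the section $\lambda^0$ of $\ell_0^*\Lambda M$, the boundary data of $D_u$ over each strip-like end assemble into exactly the loop $\lambda_\Phi$ on $\partial I^2$ used to define $\mu_{\lambda_0}$, and additivity of the Maslov index under gluing of the two ends gives $\operatorname{ind} D_u = \mu([\ell_p,w]) - \mu([\ell_q,w'])$. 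This is precisely why the consistent grading on $C(\cA)$ was built from the loops $\lambda_\Phi$.

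Third, and this is the crux, I would make $D_u$ surjective at every solution by choosing the $t$-dependent family $J = \{J_t\}$ generically. Following Floer-Hofer-Salamon and Oh, one forms the universal moduli space $\cM^{\mathrm{univ}} = \{\,(u,J)\,:\,\bar\partial_J u = 0\,\}$ over the Banach manifold $\mathcal{J}^\ell$ of $C^\ell$ families of $\omega$-compatible almost complex structures. The only genuinely delicate point is the \emph{injective point lemma}: every non-constant $J_t$-holomorphic strip has an interior point $(\tau_0,t_0)$ with $du(\tau_0,t_0) \neq 0$ and $u(\tau_0,t_0) \notin u\bigl((\RR \times I)\setminus\{(\tau_0,t_0)\}\bigr)$ — here the $t$-dependence of $J$ is exactly what defeats the obstructions coming from multiple covers, and unique continuation for $\bar\partial_J$ supplies the rest. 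Granting it, the universal linearized operator is surjective at every $(u,J)$, so $\cM^{\mathrm{univ}}$ is a Banach manifold, and Sard-Smale for the projection $\cM^{\mathrm{univ}} \to \mathcal{J}^\ell$ yields a comeager set of $J$ making $D_u$ onto at all solutions; a Taubes-type argument passes from $C^\ell$ to smooth $J$. For such $J$ the implicit function theorem makes $\cM([\ell_p,w],[\ell_q,w'])$ a smooth manifold of dimension $\operatorname{ind} D_u = \mu([\ell_p,w]) - \mu([\ell_q,w'])$, and elliptic regularity for the $\bar\partial_J$ equation with totally real boundary conditions, together with bootstrapping on the strip-like ends, shows that every $W^{1,p}$ solution is $C^\infty$.

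Finally, for the orientation I would use the theory of orientations of determinant lines of Cauchy-Riemann operators from Fukaya et al.~\cite{FOOO}. The object to orient is the determinant line bundle $\det D \to \mathcal{B}$, with fiber $\Lambda^{\mathrm{top}}\ker D_u \otimes (\Lambda^{\mathrm{top}}\operatorname{coker} D_u)^*$. Cutting the strip into a compact middle piece and a half-disk near each end and using additivity of determinant lines under gluing, the problem reduces to orienting model Cauchy-Riemann operators on the disk with a prescribed Lagrangian loop as boundary condition; the obstruction to doing this coherently is measured by the classes $w_2(TL_0)$ and $w_2(TL_1)$ as seen inside $M$, and it vanishes exactly when $(L_0,L_1)$ is relatively spin, while a choice of relative spin structure pins down an actual orientation rather than mere orientability. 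Transporting this orientation of $\det D$ along $\cM([\ell_p,w],[\ell_q,w'])$, where $\operatorname{coker} D_u = 0$ by transversality so that $\det D_u = \Lambda^{\mathrm{top}} T_u\cM$, orients the moduli space. I expect the injective point lemma of the third step to be the principal obstacle; the index identification of the second step is the most computational part, but it is essentially bookkeeping once the Maslov index conventions are fixed.
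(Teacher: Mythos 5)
This is one of the places where the survey does not give a proof at all: the theorem is stated with an attribution to Floer, with missing details supplied by Oh, and the orientation statement coming from Fukaya--Oh--Ohta--Ono, so there is no argument in the paper to compare against. Your outline is exactly the standard proof from those sources: Fredholm setup on weighted Sobolev spaces over the strip (legitimate because transversality of $L_0 \pitchfork L_1$ makes the asymptotic operators nondegenerate), index identification via the Maslov loop $\lambda_\Phi$ used to define the grading, Sard--Smale over the universal moduli space for generic $t$-dependent $J$, and orientability of the determinant line bundle governed by a relative spin structure. The only point to adjust is your injective point lemma: you ask for a point $(\tau_0,t_0)$ whose image is missed by the \emph{entire} rest of the strip, which is stronger than what is true or needed --- the Floer--Hofer--Salamon lemma only gives (and the perturbation argument only requires) injectivity within the slice $\RR \times \{t_0\}$, precisely because a perturbation of $J_{t_0}$ near $u(\tau_0,t_0)$ only enters the equation along that slice; with that correction the argument is the standard one and goes through.
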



Since $J = \{J_t\}$ does not depend on $\tau$, there is a natural
$\RR$-action on the moduli space $\cM([\ell_p,w],[\ell_q,w'])$ defined by translating
along the $\tau$-direction. We put
\[
\tcM([\ell_p,w],[\ell_q,w']) = \cM([\ell_p,w],[\ell_q,w'])/\RR.
\]
A Lagrangian~$L$ is called \emph{monotonic} if the symplectic area of every
pseudo-hol\-o\-mor\-phic disk with boundary on~$L$ is positively proportional to
the Maslov index~$\mu_L$. From now on, we assume that $L_0$ and $L_1$ are monotonic.
When the moduli space $\tcM([\ell_p,w],[\ell_q,w'])$
is 0-dimensional; i.e., when $\mu([\ell_p,w]) - \mu([\ell_q,w']) = 1$,
then it is a compact oriented 0-dimensional manifold, hence we can
consider the algebraic count $\# \tcM([\ell_p,w],[\ell_q,w']) \in \ZZ$ of its points.

\begin{definition}
The Floer chain complex $(CF(L_0,L_1;\ell_0),\partial_0)$ is defined as
\[
CF^k(L_0,L_1) = CF^k(L_0,L_1; \ell_0) =  \widehat{\bigoplus}_{[\ell_p,w] \in C(\cA) \atop \mu([\ell_p,w]) = k} \QQ[\ell_p,w],
\]
where $\widehat{\bigoplus}$ means a certain algebraic completion,
and the differential of $[\ell_p,w] \in CF^k(L_0,L_1)$ is given by
\[
\partial_0[\ell_p,w] = \sum_{[\ell_q,w'] \in CF^{k-1}(L_0,L_1)} \# \tcM([\ell_p,w],
[\ell_q,w']) \cdot [\ell_q,w'].
\]
\end{definition}

We briefly explain the completion used in the above definition.
The elements of $CF^k(L_0,L_1)$ are the (infinite) sums
\[
\sum_{[\ell_p,w]} a_{[\ell_p,w]}[\ell_p,w]
\]
such that $a_{[\ell_p,w]} \in \QQ$, and for each $C \in \RR$, the set
\[
\{\, [\ell_p,w] \colon \mathcal{A}(\ell_p,w) \le C,\, a_{[\ell_p,w]} \neq 0 \,\}
\]
is finite.

Let $G(L_0,L_1)$ be the group of deck transformations
of the covering
\[
\wt{\Omega}_{\ell_0}(L_0,L_1) \to \Omega_{\ell_0}(L_0,L_1).
\]
Then $CF(L_0,L_1)$ is a module over the Novikov ring $\Lambda(L_0,L_1)$, where
$\Lambda^k(L_0,L_1)$ is the set of all (infinite) sums
$\sum_{g \in G(L_0,L_1), \, \mu(g) = k} a_g [g]$ such that $a_g \in \QQ$,
and for each $C \in \RR$, the set
\[
\{\, g \in G(L_0,L_1) \colon E(g) \le C, \, a_g \neq 0 \,\}
\]
is finite.

The minimal Maslov number of a Lagrangian~$L$ is the non-negative integer~$\S_L$
defined by~$\text{Im}(\mu_L) = \S_L \ZZ$, where $\mu_L \colon \pi_2(M,L) \to \ZZ$
is the Maslov index homomorphism.
Building on work of Floer~\cite{Floer}, Oh~\cite{Oh3, Oh4} proved that
$\partial_0 \circ \partial_0 = 0$ for monotone Lagrangian
submanifolds, where the minimal Maslov number is bigger than two, with
some topological restrictions on the pair $(L_0,L_1)$. As in the case of Morse
homology, the proof relies on showing that the ends of the compactification of
1-dimensional moduli spaces $\cM([\ell_p,w],[\ell_q,w'])$ are only
broken flow-lines.

The reader might be wondering why one has to consider the covering space $\wt{\Omega}_{\ell_0}(L_0,L_1)$
instead of trying to do Morse theory on $\Omega_{\ell_0}(L_0,L_1)$. If one did that,
one would run into the problem of having infinitely many pseudo-holomorphic disks (gradient flow-lines
on the path space) between two intersection points (critical points of the functional), and
the boundary map would not be defined. If one instead passed to the whole universal cover,
this problem would be resolved, and
we would get the twisted (co)homology of $\Omega_{\ell_0}(L_0,L_1)$ with coefficients
lying in some completion of $\ZZ[\pi_1(\Omega_{\ell_0}(L_0,L_1))]$.
This is unfortunate since the coefficient ring depends on the pair of Lagrangians.
Since there are only finitely many disks with bounded
symplectic area, it suffices to pass to the cover $\wt{\Omega}_{\ell_0}(L_0,L_1)$,
and we can work over a Novikov ring independent of the Lagrangians.
Twisting with the Maslov index allows one to define a homological $\ZZ$-grading,
solving the issue of having disks between two intersection points of different Maslov indices
(there would still be a $\ZZ_2$-grading in this case).

\section{Definition of Heegaard Floer homology}

\begin{definition}
  Let $Y$ be a closed, connected, oriented 3-manifold. A \emph{Heegaard diagram}
  for $Y$ is a triple $(\S,\alphas,\betas)$ such that
  \begin{itemize}
  \item $\S$ is a closed oriented genus $g$ surface embedded in $Y$,
  \item $\alphas \subset \S$ is a properly embedded 1-manifold with
    components $\a_1,\dots,\a_g$ that are linearly independent in
    $H_1(\S)$, and each of which bounds a disk in $Y$ to the negative
    side of $\S$,
  \item $\betas \subset \S$ is a properly embedded 1-manifold with
    components $\b_1, \dots, \b_g$ that are linearly independent in
    $H_1(\S)$, and each of which bounds a disk to the positive side of
    $\S$.
  \end{itemize}
\end{definition}

A Heegaard diagram of $Y$ arises from a self-indexing Morse function
$f \colon Y \to \RR$ with a single minimum and maximum, together with
a Riemannian metric on $Y$. If $p$ is a critical point of $f$, then we
denote by $W^s(p)$ and $W^u(p)$ the stable and unstable manifolds of
$p$ under the gradient flow of~$f$.
We take $\S$ to be $f^{-1}(3/2)$. The $\a$-curves are of the form
$\S \cap W^u(p)$, where $p$ is an index-1 critical point of $f$, and
the $\b$-curves are of the form $W^s(q) \cap \S$, where $q$ is an
index-2 critical point of~$f$.

Let $\H = (\S,\alphas,\betas)$ and $\H' = (\S',\alphas',\betas')$ be
Heegaard diagrams of $Y$. Then we say that $\H'$ is obtained from $\H$
by a \emph{stabilization} if $\S' \setminus \S$ is a punctured torus,
and $\alphas' = \alphas \cup \a$ and $\betas' = \betas \cup \b$, where
$\a$ and $\b$ are two curves in $\S' \setminus \S$ that intersect each
other transversely in a single point. In this case, we also say that~$\H$
is obtained from~$\H'$ via a \emph{destabilization}.

The diagram $\H'$ is obtained
from $\H$ by \emph{handlesliding} $\a_i$ over $\a_j$ along the arc $a
\subset \S$ connecting $\a_i$ and $\a_j$ if $\S = \S'$ and $\betas =
\betas'$, while $\alphas' = (\alphas \setminus \a_i) \cup \a_i'$,
where $\a_i'$ is the boundary component of a thin regular neighborhood
$N(\a_i \cup a \cup \a_j)$ distinct from $\a_i$ and $\a_j$.

According to a theorem of Reidemeister and
Singer~\cite{Reidemeister, Singer}, any two Heegaard
diagrams of the 3-manifold~$Y$ become diffeomorphic after a sequence
of (de)stabilizations and handleslides. Slightly more is true, any two
diagrams become isotopic in~$Y$ after a suitable sequence of
(de)stabilizations and handleslides. However, this isotopy is far from
unique, even homotopically. This will cause some serious difficulties
when trying to prove the naturality of Heegaard Floer homology, and the failure
of naturality for the hat version.

For the construction of Heegaard Floer homology, it is essential to
fix a basepoint $z \in \S \setminus (\alphas \cup \betas)$.
Furthermore, we choose a complex structure $\mathfrak{j}$ on $\S$.
Consider the symmetric product
\[
\text{Sym}^g(\S) = \overbrace{\S \times \dots \times \S}^g / S_g,
\]
this is the space of unordered $g$-tuples of points in $\S$.
To see that this is a manifold, observe that $\sym^g(\CC) \cong \CC^g$
by the fundamental theorem of algebra: Given an ordered $g$-tuple
$(a_1, \dots, a_g) \in \CC^g$, the roots of the polynomial
$z^g + a_1 z^{g-1} + \dots + a_g$ give an unordered $g$-tuple.
Note that the smooth
structure on $\text{Sym}^g(\S)$ depends on $\mathfrak{j}$, but the
diffeomorphism type is independent.  The complex structure
$\mathfrak{j}$ induces a complex structure
$\text{Sym}^g(\mathfrak{j})$ on the symmetric product
$\text{Sym}^g(\S)$.  Let
\[
\T_\a = \a_1 \times \dots \times \a_g / S_g,
\]
and similarly,
\[
\T_\b = \b_1 \times \dots \times \b_g / S_g,
\]
these tori are totally real
submanifolds of $\text{Sym}^g(\S)$; i.e., they contain no complex
lines.  As shown by Ozsv\'ath and Szab\'o, one can study intersection
Floer homology for the pair $(\T_\a,\T_\b)$ in a
manner analogous to the Lagrangian case. If one chooses a symplectic
structure on $\S$, the induced symplectic structure on $\text{Sym}^g(\S)$
is singular. Perutz~\cite{Perutz} constructed a symplectic structure
on $\text{Sym}^g(\S)$ compatible with the complex structure that
makes $\T_\a$ and $\T_\b$ Lagrangian and monotonic with minimal Maslov
number two. Furthermore, he proved that a handleslide
can be realized by a Hamiltonian isotopy, however, the pair might cease to
be monotonic in the meantime (this is related to admissibility that we discuss next).

Monotonicity of $\T_\a$ and $\T_\b$ suffices to define the boundary map~$\partial$
for the Lagrangian intersection Floer homology of the pair $(\T_\a,\T_\b)$ in $\sym^g(\S)$, but since the minimal
Maslov number is two, one cannot simply refer to the work of Floer~\cite{Floer} and Oh~\cite{Oh3}
to claim that $\partial^2 = 0$ in this setting. However, Ozsv\'ath and Szab\'o~\cite[Theorem~3.15]{OSz1}
prove that in the boundary of the compactification of the one-dimensional moduli spaces,
the algebraic count of configurations that are not broken flow-lines
is zero (these are so called boundary bubbles, and they might actually appear), and hence indeed we have~$\partial^2 = 0$.
In the language of Fukaya et al.~\cite{FOOO}, the Lagrangian intersection
Floer homology of the pair $(\T_\a,\T_\b)$ is unobstructed.

However, the intersection Floer homology of $(\T_\a,\T_\b)$ is \emph{not} an invariant of the underlying
3-manifold.  Indeed, consider the two diagrams of $S^1\times S^2$ in Figure~\ref{fig:admissibility}.
\begin{figure}
\includegraphics{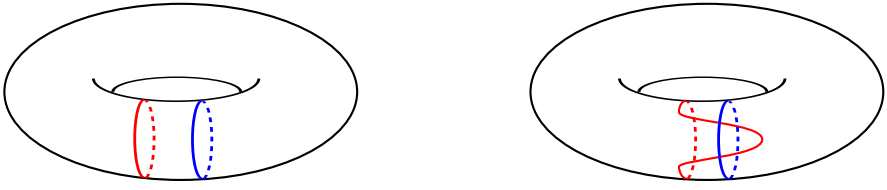}
\caption{A non-admissible diagram of $S^1 \times S^2$ on the left, and an admissible one on the right.}
\label{fig:admissibility}
\end{figure}
Both of them are genus one, so there is a single
$\alpha$-curve $\a$ and a single $\beta$-curve $\b$, and both of them
are meridians.  In the first diagram, $\a$ and $\b$ are disjoint,
hence the Floer homology group is generated by the empty set, in the
second, they intersect in two points transversely and the boundary map
is zero.  The two homologies have different ranks.

To make the construction work, Ozsv\'ath and Szab\'o introduced two
additional ingredients. One is the choice of a basepoint $z$ in $\S
\setminus (\alphas \cup \betas)$, and we record the intersection
number of every holomorphic disk with the subvariety
\[
V_z = \{z\} \times \text{Sym}^{g-1}(\S).
\]
More precisely, we introduce
an additional formal variable $U$, and deform the Floer complex
according to the intersection number with $V_z$. There are
several different ways of doing this, giving rise to the
different flavors of Heegaard Floer homology. Cf. Seidel~\cite{Seidel},
who first introduced such an idea in a different context.
Without fixing the basepoint, the invariant obtained would only
capture the algebraic topology of $Y$. Furthermore, it would not be natural, as
illustrated by~\cite[Example~3.3]{naturality}.

The second ingredient is that, when $b_1(Y) > 0$, we only consider so called \emph{admissible}
diagrams. In the case of~$\widehat{HF}$ and $HF^+$, we use weakly admissibile diagrams, but
to define $HF^\infty$ and $HF^-$, the diagram has to be strongly admissible
in reference to a fixed $\text{Spin}^c$-structure on~$Y$.
We will discuss admissibility later.

In fact, Heegaard Floer homology is an invariant of a $\Spinc$
3-manifold. As explained in the previous section, Lagrangian intersection Floer
homology is Morse theory on a \emph{component} of
the space $\Omega(\T_\a,\T_\b)$ of paths connecting $\T_\a$ and $\T_\b$. The choice
of basepoint~$z$ sets up a bijection between the components of
this path space with $\Spinc$-structures on $Y$. We explain this construction
next.

The group $\spinc(n)$ is defined to be
\[
\left( \text{Spin}(n) \times S^1 \right) / \, \langle(-1,-1)\rangle;
\]
i.e., it is the complexification
of the group $\text{Spin}(n)$. It fits into the exact sequence
\[
1 \rightarrow S^1 \rightarrow \spinc(n) \rightarrow SO(n) \rightarrow 1.
\]
The first map is given by $z \mapsto [1, z]$, while the
second is $[g, z] \mapsto p(g)$, where $p \colon \text{Spin}(n) \to SO(n)$
is the covering map.

Given an oriented Riemannian $n$-manifold~$M$, a $\spinc$-structure
on $M$ is the reduction/lift of the structure group of $TM$, the principal $SO(n)$-bundle $P_M$
of oriented orthonormal frames on~$M$, to $\spinc(n)$, considered up to equivalence.
For us, an equivalent description of $\Spinc$-structures on 3-manifolds due
to Turaev~\cite{spinc} will be more relevant, which we now review.
Let $Y$ be a closed, connected, oriented 3-manifold. We say
that the nowhere vanishing vector fields $v$ and $w$ on~$Y$ are
\emph{homologous}, and we write $v \sim w$, if they are homotopic in
the complement of a ball.  A $\Spinc$-structure on the 3-manifold $Y$ is
the homology class of nowhere vanishing vector fields, and we denote
the set of these by $\Spinc(Y)$. Note that we can think of a
$\Spinc$-structure as a homotopy class of nowhere vanishing vector
fields over the 2-skeleton of $Y$ that extend to~$Y$, and the obstruction to homotoping
$v$ to $w$ over the 2-skeleton (or equivalently, in the complement of
a ball), is an element of $H^2(Y) \cong H_1(Y)$. This way, we can
view $\Spinc(Y)$ as an affine space over $H_1(Y)$. The concrete
action of a class $a \in H_1(M)$ on $\s \in \Spinc(Y)$ is given by
Reeb turbularization: we represent~$a$ as an embedded oriented 1-manifold,
and then ``turbularize'' the vector field $v$ along it.
A homomorphism
\[
c_1 \colon \Spinc(Y) \to H^2(Y)
\]
is given by taking the first Chern class (or equivalently, Euler class)
of the oriented 2-plane field $v^{\perp}$, where $v$ is a nowhere
vanishing vector field representing $\s$. Then we have the
formula
\[
\s-\s' = 2(c_1(\s) - c_1(\s')).
\]
In particular, the homomorphism $c_1$ is injective if and only if
there is no 2-torsion in $H^2(Y)$.

Let $(\S,\alphas,\betas)$ be a Heegaard diagram, and consider the
tori $\T_\a$ and $\T_\b$ in $\sym^g(\S)$. We view~$D^2$ as the unit disk in~$\CC$.
Given intersection points
$\x$, $\y \in \T_\a \cap \T_\b$, a topological Whitney
disk from~$\x$ to~$\y$ is a continuous map $u \colon D^2 \to \sym^g(\S)$
such that
\begin{itemize}
\item $u(-i) = \x$ and $u(i) = \y$,
\item $u(\partial D^2 \cap \{\Re \le 0\}) \subset \T_\a $,
\item $u(\partial D^2 \cap \{\Re \ge 0\}) \subset \T_\b$.
\end{itemize}
A topological Whitney disk can be viewed as a path in $\Omega(\T_\a,\T_\b)$
connecting the constant paths~$\ell_\x$ and~$\ell_\y$.
We say that $u$ and $u'$ are \emph{homotopic} if they are homotopic
through topological Whitney disks. We denote by $\pi_2(\x,\y)$
the set of homotopy classes of Whitney disks from~$\x$ to~$\y$.
This is non-empty if and only if~$\ell_\x$ and~$\ell_\y$ lie in the
same component of $\Omega(\T_\a,\T_\b)$.

Given a based Heegaard diagram $\H = (\S, \alphas, \betas, z)$ of
$Y$, we can associate a $\Spinc$-structure $\s(\x)$ to every
intersection point $\x \in \T_\a \cap \T_\b$ as follows. Pick a
self-indexing Morse function~$f$ on~$Y$ with a unique minimum and
maximum, and a Riemannian metric~$g$, inducing~$\H$. Let~$v$ be the
gradient of~$f$ with respect to~$g$, and consider the flow of~$v$.
Then let~$N$ be a regular neighborhood of the union of the
flow-lines of~$v$ passing through the points of~$\x$ and through~$z$.
Then $v|_{Y \setminus N}$ extends to a nowhere vanishing vector field~$v'$
on~$Y$ since each component of~$N$ contains two singularities of~$v$
of opposite indices.  Then~$\s(\x)$ is defined to be the homology
class of~$v'$. Given intersection points~$\x$, $\y \in \T_\a \cap \T_\b$,
there is a topological Whitney disk connecting~$\x$ and~$\y$ if and only
if $\s(\x) = \s(\y)$. Note that~$\s(\x)$ depends on the choice of basepoint~$z$.

Now we explain the notion of admissibility.
One advantage of looking at
admissible diagrams is that the sums appearing in the definition of
the boundary map are finite; we do not need to use the completion of
Fukaya et al.\ and Novikov coefficients. Furthermore, it rules
out issues such as the one in the example in Figure~\ref{fig:admissibility},
as the diagram on the left hand side is not admissible.

Again, let $Y$ be a closed, connected,
oriented 3-manifold, and $\H = (\S,\alphas,\betas,z)$ a based Heegaard
diagram of~$Y$.
A \emph{domain} is a $\ZZ$-linear combination
of components of $\S \setminus (\alphas \cup \betas)$ (we call these \emph{regions}),
this can be viewed as a 2-chain on~$\S$. We denote by $D(\S,\alphas,\betas)$ the set of domains, this is
a free $\ZZ$-module. We define a map
\[
\cD \colon \pi_2(\x,\y) \to D(\S,\alphas,\betas)
\]
as follows: Given a homotopy class of topological Whitney disks
$\phi \in \pi_2(\T_\a,\T_\b)$, the coefficient of $\cD(\phi)$
at $p \in \S \setminus (\alphas \cup \betas)$ is
\[
n_p(\phi) = \# (\phi \cap V_p),
\]
where $V_p = \{p\} \times \sym^{g-1}(\S)$.
We say that a domain $\cD \in D(\S,\alphas,\betas)$
connects~$\x$ and~$\y$ if $\partial \cD  \cap \alphas$
is a 1-chain with boundary $\x -\y$, and $\partial \cD \cap \betas$
is a 1-chain with boundary $\y - \x$. Let $D(\x,\y)$ be the
set of such domains.
Essentially, the map~$\cD$ gives a bijection between $\pi_2(\x,\y)$
and $D(\x,\y)$. It is important to note that $\pi_2(\sym^g(\S)) = \ZZ$,
and if we view the generator as an element $\phi$ of $\pi_2(\x,\x)$, then
$\cD(\phi) = [\S]$; i.e., it has multiplicity one everywhere.

A \emph{periodic domain}~$\mathcal{P}$ in $\H$ is an element of $D(\S,\alphas,\betas)$
such that $n_z(\mathcal{P}) = 0$,
and $\partial \mathcal{P}$ is a linear combination of $\a$-circles and $\b$-circles
(as opposed to having $\a$-arcs or $\b$-arcs in $\partial
\mathcal{P}$). For every $\x \in \T_\a \cap \T_\b$ and $\cD \in D(\x,\x)$,
the domain $\cD - n_z(\cD) \cdot [\S]$ is a periodic domain.
In the symmetric product $\text{Sym}^g(\S)$, periodic
domains correspond to homology classes in $H_2(\text{Sym}^g(\S),
\mathbb{T}_\a \cup \mathbb{T}_\b)$ disjoint from $V_z$, or equivalently,
to elements of $\pi_1(\Omega(\T_\a,\T_\b))$ disjoint from~$V_z$.
Given a periodic domain~$\mathcal{P}$, we can cap off the boundary components
of $\mathcal{P}$ in $Y$ by the disks bounded by the $\alphas$- and $\betas$-curves
in the two handlebodies, and obtain a 2-cycle in~$Y$ whose
homology class we denote by $H(\mathcal{P}) \in H_2(Y)$.  We say that
a diagram is \emph{weakly admissible} if every non-zero periodic
domain has both positive and negative coefficients. This suffices
for the definition of $\widehat{HF}(Y)$ and $HF^+(Y)$. We can
always isotope the $\a$- and $\b$-curves in an arbitrary Heegaard
diagram to make it admissible.
Ozsv\'ath and Szab\'o~\cite{OSz1} showed that a diagram is weakly admissible if and only
if~$\S$ can be endowed with a volume form for which each periodic
domain has total signed area equal to zero.

To define the other flavors of Heegaard Floer homology, we need
\emph{strong admissibility}, which is in reference to a
$\Spinc$-structure $\s \in \Spinc(Y)$. A pointed Heegaard diagram
is called strongly admissible for the $\Spinc$ structure~$\s$
if for every non-trivial periodic domain~$\mathcal{P}$
with
\[
\langle c_1(\s), H(\mathcal{P}) \rangle = 2n \ge 0,
\]
the domain $\mathcal{P}$ has some coefficient bigger than~$n$.
If a diagram is strongly admissible for a single $\spinc$-structure, then
it is weakly admissible.
Furthermore, when $H^2(Y)$ is torsion; i.e., when $b_1(Y) = 0$, then
the notions of weak and strong admissibility coincide.
According to Lekili~\cite[Proposition~30]{Lekili} (the same proof also
covers the case $k = 0$), the diagram $(\S,\alphas,\betas,z)$
is strongly admissible for the $\Spinc$-structure~$\s$
if and only if the component of the path space $\Omega(\T_\a,\T_\b)$
corresponding to $\s$ is monotonic.
More precisely, if $\x \in \T_\a \cap \T_\b$ satisfies $\s(\x) = \s$,
then there exists an area form $\xi$ on $\S$ such that for an
induced symplectic form~$\omega_\xi$ on $\sym^g(\S)$ that makes~$\T_\a$ and~$\T_\b$ Lagrangian,
and the $\omega_\xi$-area and the index maps
\[
\pi_1(\Omega(\T_\a,\T_\b),\x) \to \RR
\]
are proportional.
Recall that monotonicity ensures that $\partial$ is well-defined, and
that the Lagrangian intersection Floer
homology is unobstructed; i.e., that $\partial^2 = 0$ (assuming the minimal Maslov number
is greater than two, or if the contributions of boundary bubbles to~$\partial^2$ is zero, which is the
case for~$HF$).
We can always isotope the $\a$- and $\b$-curves to make
the diagram strongly admissible for a given $\Spinc$-structure,
but as shown by the example of $S^1 \times S^2$, we cannot
necessarily make it strongly admissible for all of them simultaneously.

Let $(\S,\alphas,\betas,z)$ be strongly $\s$-admissible, and suppose that
$\s(\x_0) = \s$. Let $\Omega_{\x_0}(\T_\a,\T_\b)$ the component of $\ell_{\x_0}$.
Recall that to define Lagrangian intersection Floer homology, we need
to consider the cover $\wt{\Omega}_{\x_0}(\T_\a,\T_\b)$, and generators
are certain equivalence classes $[\ell_\x,w]$, where $w$ is a path of paths
from $\ell_{\x_0}$ to $\ell_\x$. Then we can view $w$ as a topological Whitney
disk from~$\x_0$ to $\x$. Given another such disk~$w'$, recall that $w \sim w'$
if they have the same Maslov index and symplectic area. Equivalently, if
$\phi = w \# \ol{w}'$ has Maslov index and symplectic area zero.
The domain $\cD(\phi)$ can be written as $\mathcal{P} + n_z(\phi) \cdot [\S]$.
By the strong admissibility condition, we can choose the symplectic form
on~$\S$ such that the signed area of $\mathcal{P}$ is zero, hence the symplectic
volume of the corresponding element of $\pi_2(\x_0,\x_0)$ is also zero.
By the result of Lekili, this also has Maslov index zero (this also follows from a
Maslov index formula of Ozsv\'ath and Szab\'o). The component of $\phi$ corresponding
to $n_z(\phi) \cdot [\S]$ is $n_z(\phi)$ times the generator of $\pi_2(\sym^g(\S))$.
This has Maslov index two. We conclude that $w \sim w'$ if and only if $n_z(w) = n_z(w')$,
and the map
\[
\pi_{\x_0} \colon \wt{\Omega}_{\x_0}(\T_\a,\T_\b) \to \Omega_{\x_0}(\T_\a,\T_\b)
\]
is a trivial $\ZZ$-covering. So strong $\s$-admissibility ensures that
the covering $\pi_{\x_0}$ of the component of the path-space
corresponding to~$\s$ is trivial, and the trivialization is given
by the choice of basepoint~$z$. In particular, we can identify a generator
$[\ell_\x,w]$ of the Floer chain complex with the pair
$[\x,n_z(w)] \in (\T_\a \cap \T_\b) \times \ZZ$. These are exactly
the generators of the chain complex $CF^\infty(\alphas,\betas,z)$,
as defined by Ozsv\'ath and Szab\'o. The group of deck transformations
of the covering $\pi_{\x_0}$ is $\ZZ$, Ozsv\'ath and Szab\'o
denote the generator by~$U$. To obtain $U[\ell_\x,w]$, we multiply
$w$ with the generator of $\pi_2(\sym^g(\S))$. This corresponds
to $U[\x,i] = [\x,i-1]$.

Given intersection points $\x$, $\y \in \T_\a \cap \T_\b$
with $\s(\x) = \s(\y)$, there is a homotopy class~$\phi$
of topological Whitney
disks from~$\x$ to~$\y$, and as in the case of Lagrangian
intersection Floer homology, we can consider the
moduli space $\M(\phi)$ of pseudo-holomorphic representatives
of $\phi$. This has an $\RR$-action, and we write
\[
\wM(\phi) = \M(\phi)/\,\RR.
\]
The expected dimension of $\M(\phi)$ is given by the Maslov
index $\mu(\phi)$. For the definition of the Heegaard Floer
differential, we are going to count rigid pseudo-holomorphic
discs; i.e., ones that have Maslov index one.
If $\mu(\phi) = 1$, then $\wM(\phi)$ is a finite collection
of points. We can either count the number of these points
modulo~$2$, or after an appropriate choice of orientations,
the points of $\wM(\phi)$ come with signs, and we can
take the algebraic number of them, $\# \wM(\phi)$.
As was mentioned before, all the moduli spaces are oriented if
the pair of Lagrangians $(\T_\a,\T_\b)$ is relatively spin.
In particular, it suffices to endow~$\T_\a$ and~$\T_\b$
with a $\text{Spin}$-structure.

Now we are ready to define the Heegaard Floer chain complex
$CF^\infty(\H, \s)$ for a strongly $\s$-admissible
pointed Heegaard diagram $\H = (\S,\alphas,\betas,z)$.  It is the free
$\ZZ$-module generated by pairs $[\x,i]$, where $\x \in \T_\a \cap
\T_\b$ is an intersection point, and $i \in \ZZ$ is an integer.
The grading is defined by
\[
\gr([\x,i],[\y,j]) = \mu(\phi) - 2n_z(\phi) + 2i -2j,
\]
where $\phi$ is a topological Whitney disk from $\x$ to $\y$,
and $n_z(\phi)$ is the algebraic intersection number of $\phi$
and $V_z$.
The boundary map $\partial^\infty$ is given by the formula
\[
\partial^\infty [\x,i] = \sum_{\y \in \T_\a \cap \T_\b} \sum_{\phi \in
  \pi_2(\x,\y) \atop \mu(\Phi) = 1} \# \wM(\phi) \cdot
[\y,i-n_z(\phi)].
\]
Note that if $\s(\y) \neq \s$, then the second sum is automatically
zero since there is not even a topological Whitney disk from~$\x$
to~$\y$. Hence, on the right, we only obtain generators $[\y,j]$
such that $\s(\y) = \s$. The fact that this sum is finite and we
do not have to resort to Novikov coefficients is again a consequence
of admissibility. Indeed, Ozsv\'ath and Szab\'o showed~\cite{OSz1}
that if the diagram is strongly $\s$-admissible, then,
given intersection points $\x$, $\y \in \T_\a \cap \T_\b$,
there are only finitely many homotopy classes
$\phi \in \pi_2(\x,\y)$ for which $\mu(\phi) = d$ and $\cD(\phi) \ge 0$
(i.e., all of its coefficients are non-negative).
Note that if $\phi$ has a pseudo-holomorphic representative, then $\cD(\phi) \ge 0$
by positivity of intersection with the hypersurfaces $V_p$ for
at least one point~$p$ in each component of $\S \setminus (\alphas \cup \betas)$
(we restrict ourselves to almost complex structures on $\sym^g(\S)$ that are holomorphic
around these~$V_p$).

If we choose a coherent system of orientations
for the moduli spaces, we get that
$\partial^\infty \circ \partial^\infty = 0$. The appropriate choice
of $\text{Spin}$-structures on $\T_\a$ and $\T_\b$ are obtained by
picking the non-fillable $\text{Spin}$-structure on each $\a_i$
and $\b_j$, and then taking the product of these. This is how we
canonically orient all the moduli spaces.

There is a chain map
\[
U \colon CF^\infty(\H,\s)
\to CF^\infty(\H,\s)
\]
defined by $U[\x,i] = [\x, i-1]$, which lowers degree by two.
As we mentioned above, this corresponds to a deck transformation
in the Floer theory.

We obtain the chain complexes $CF^-$, $CF^+$, and $\widehat{CF}$
from $CF^\infty$ as follows. Let $CF^-$ be the subcomplex of
$CF^\infty$ generated by pairs $[\x,i]$ such that $i < 0$.
Then $CF^+$ is the quotient complex $CF^\infty / CF^-$.
So there is a short exact sequence
\[
0 \longrightarrow CF^-(\H,\s) \longrightarrow
CF^\infty(\H,\s) \longrightarrow
CF^+(\H,\s) \longrightarrow 0.
\]
The endomorphism $U$ on $CF^\infty$ restricts to an endomorphism
$U^-$ on $CF^-$, and hence we also obtain an endomorphism $U^+$
on the quotient $CF^+$. We denote the kernel of $U^+$ by
$\widehat{CF}$. We can also define $\widehat{CF}(\S,\alphas,\betas,z,\s)$
by taking the free $\ZZ$-module generated by those intersection
points $\x \in \T_\a \cap \T_\b$ for which $\s(\x) = \s$,
and the boundary map $\widehat{\partial}$ is given by
\[
\widehat{\partial} \x = \sum_{y \in \T_\a \cap \T_\b}
\sum_{\phi \in \pi_2(\x,\y) \atop \mu(\phi) = 1, \, n_z(\phi) = 0} \#\wM(\phi) \cdot \y.
\]
This give rise to the short exact sequence
\[
0 \longrightarrow \widehat{CF}(\H,\s) \longrightarrow
CF^+(\H,\s) \stackrel{U^+}{\longrightarrow}
CF^+(\H,\s) \longrightarrow 0.
\]

By taking homology, we obtain the $\ZZ[U]$-modules
\[
HF^\infty(\H,\s) \text{, } HF^+(\H,\s) \text{, } HF^-(\H,\s) \text{, and } \HFh(\H,\s),
\]
where the $U$-action is trivial on $\HFh(\H,\s)$. These are related by long exact
sequences induced by the above short exact sequences of chain complexes.

\subsection{Invariance and naturality}

Let $Y$ be a closed, connected, oriented 3-man\-i\-fold, and $\s \in \spinc(Y)$ a $\spinc$-structure on~$Y$.
Ozsv\'ath and Szab\'o~\cite{OSz1} show that, given any two strongly $\s$-admissible pointed Heegaard diagrams
$\H = (\S,\alphas,\betas,z)$ and $\H' = (\S',\alphas',\betas',z')$ of~$Y$, one has
\[
HF^\circ(\H,\s) \cong HF^\circ(\H',\s),
\]
where $\circ$ is one of the four flavors hat, $+$, $-$, and $\infty$.
As mentioned before, by the Reidemeister-Singer theorem~\cite{Reidemeister, Singer},
the diagrams~$\H$ and~$\H'$ become \emph{diffeomorphic} after a sequence of Heegaard moves; i.e.,
isotopies of the $\a$- and $\b$-curves, $\a$- and $\b$-handleslides, stabilizations,
and destabilizations. In fact, Ozsv\'ath and Szab\'o show that there is also
a sequence of Heegaard moves that passes through strongly $\s$-admissible diagrams.
Then what remains to show is that $HF$ is invariant under changing the complex structure $\j$
on $\S$ and the $1$-parameter family $J_t$ of perturbations of $\sym^g(\j)$ on $\sym^g(\S)$,
and under any Heegaard move that preserves admissibility.

To assign a concrete group $HF^\circ(Y,p,\s)$ to the based $\spinc$ $3$-manifold $(Y,p,\s)$, one needs more.
The first steps toward naturality were made by Ozsv\'ath and Szab\'o~\cite{OSz10}, and
completely worked out by Thurston and the author~\cite{naturality} over $\ZZ_2$. What one needs
to construct is an isomorphism
\[
F_{\H,\H'}^\s \colon HF^\circ(\H,\s) \to HF^\circ(\H',\s)
\]
for any pair of admissible based Heegaard diagrams $\H$, $\H'$ of the based $3$-manifold $(Y,p)$
that satisfy the following two properties:
\begin{enumerate}
\item $F_{\H,\H}^\s = \text{Id}_{HF^\circ(\H,\s)}$ for any admissible diagram $\H$,
\item \label{it:nat} $F_{\H',\H''}^\s \circ F_{\H,\H'}^\s = F_{\H,\H''}^\s$ for any admissible diagrams $\H$, $\H'$, and $\H''$ of~$(Y,p)$.
\end{enumerate}
This is an instance of a \emph{transitive system of groups}, as defined by Eilenberg and Steenrod~\cite[Definition~6.1]{ES}.
We will call the maps~$F_{\H,\H'}$ \emph{canonical isomorphisms}. We would like to warn the reader of the
widespread practice of using the word ``canonical'' for any well-defined map, without checking property~\eqref{it:nat}.
Given such a transitive system, we obtain $HF^\circ(Y,p)$ by taking the product of all the groups
$HF^\circ(\H)$, where $\H$ is an admissible diagram of~$(Y,p)$ (note that these form a set as~$\S$ is a subset of ~$Y$),
and take elements $x$ in this product such that $F_{\H,\H'}^\s(x(\H)) = x(\H')$ for any pair $\H$, $\H'$.
For every admissible diagram~$\H$, we have an isomorphism
\[
P_\H^\s \colon HF^\circ(Y,p,\s) \to HF^\circ(\H,\s).
\]

One first constructs canonical isomorphisms for changing~$(\j,J_t)$ using continuation maps.
Next, suppose that $(\S,\alphas,\betas,z)$ and $(\S,\alphas,\betas',z)$ are diagrams of~$(Y,p)$
(in particular, $z = p$), and that
the triple diagram $(\S,\alphas,\betas,\betas',z)$ is admissible.
Note that $(\S,\betas,\betas')$ is a diagram
of $\#^g (S^1 \times S^2)$, and that the group $\HFh(\S,\betas,\betas',z,\s_0)$ is isomorphic
with $H_*(T^g; \ZZ_2)$. The ``fundamental class'' is denoted by~$\Theta_{\b,\b'}$.
Then we obtain a map
\[
\Psi^{\alphas}_{\betas \to \betas'} \colon HF^\circ(\S,\alphas,\betas,z) \to HF^\circ(\S,\alphas,\betas',z)
\]
by counting rigid pseudo-holomorphic triangles in $\sym^g(\S)$ with edges lying on $\T_\a$, $\T_\b$, and $\T_{\b'}$,
and one corner mapping to $\Theta_{\b,\b'}$.
Given admissible diagrams $\H = (\S,\alphas,\betas,z)$ and $\H' = (\S,\alphas',\betas',z)$ of~$(Y,p)$,
we construct $F_{\H,\H'}$ as follows.
First, suppose that the quadruple diagram $(\S,\alphas,\alphas',\betas,\betas')$ is admissible.
Then let
\[
\Psi^{\alphas \to \alphas'}_{\betas \to \betas'} = \Psi^{\alphas \to \alphas'}_{\betas'} \circ \Psi^{\alphas}_{\betas \to \betas'}.
\]
In the general case, we pick two sets of attaching curves $\ol{\alphas}$ and $\ol{\betas}$
such that the quadruple diagrams $(\S,\alphas,\ol{\alphas},\betas,\ol{\betas})$ and $(\S,\alphas',\ol{\alphas},\betas',\ol{\betas})$
are both admissible (this is always possible). Then let
\[
F_{\H,\H'} = \Psi^{\ol{\alphas} \to \alphas'}_{\ol{\betas} \to \betas'} \circ \Psi^{\alphas \to \ol{\alphas}}_{\betas \to \ol{\betas}}.
\]

Now suppose that $\H'$ is obtained from $\H$ by a stabilization. In this case, $\S' \setminus \S$
is a punctured torus~$T$, with a single $\a$-curve $\a_0$ and a single $\b$-curve $\b_0$ that intersect
in a unique point $\theta$. We obtain a map
\[
CF^\circ(\H) \to CF^\circ(\H')
\]
by mapping the generator $\x$ to $\x \times \theta$. This is a chain map when the complex structure
is chosen such that the connected sum neck along~$\partial T$ is very long. The map induced
on the homology is $F_{\H,\H'}$. Similarly, when $\H'$ is obtained from~$\H$ by
a destabilization, then we take $F_{\H,\H'} = F_{\H',\H}^{-1}$.

Finally, we define $F_{\H,\H'}$ when $\H$ and $\H'$ are related by a diffeomorphism.
In this case, $d$ induces a symplectomorphism between $\sym^g(\S)$ and $\sym^g(\S')$
that maps $\T_\a$ to $\T_{\a'}$ and $\T_\b$ to $\T_{\b'}$. If we use a complex structure $\j$ on $\S$
and $d_*(\j)$ on $\S'$, then $d$ tautologically induces a map between $HF^\circ(\H)$ and $HF^\circ(\H')$.

To obtain $F_{\H,\H'}$ for an arbitrary pair of diagrams of~$(Y,p)$, we take a sequence of diagrams
$\H_0, \dots, \H_n$ such that $\H_0 = \H$, $\H_n = \H'$, and $\H_i$ and $\H_{i+1}$
are related by changing the $\a$- and $\b$-curves, a stabilization or destabilization, or
a diffeomorphism that is isotopic to the identity in~$Y$ fixing~$p$. Then we let
\[
F_{\H,\H'} = F_{\H_{n-1},\H_n} \circ \dots \circ F_{\H_0,\H_1}.
\]
The main result of our paper with Thurston~\cite{naturality} is that this is independent of the
choice of sequence $\H_0,\dots,\H_n$. The idea of the proof is the following.
Consider the space of gradient vector fields~$v$ on~$Y$, these are vector fields that
arise as the gradient of a smooth function on~$Y$ with respect to some Riemannian metric.
Given a generic gradient vector field~$v$, there is a contractible space of overcomplete Heegaard diagrams
of~$Y$ compatible with~$v$ in the sense that~$\S$ is transverse to~$v$ and contains the index
zero and one critical points on its negative side, while the index two and three critical
points on its positive side. The $\a$-curves are obtained by taking the intersection of~$\S$
with the unstable manifolds of the index one critical points, while the $\b$-curves are obtained
by intersecting the stable manifolds of the index two critical points with~$\S$. We call the
diagram overcomplete because the number of $\a$- and $\b$-curves might exceed $g(\S)$.

Given a generic one-parameter family~$v_t$ of gradient vector fields, we can deform the associated
diagram smoothly as long as~$v_t$ is generic.
We then study what happens to the associated diagram as one passes a bifurcation value of~$t$.
These correspond to handleslides, a generalized form of (de)stabilization, and creation or
cancellation of 0-homologous $\a$- or $\b$-curves. By writing the generalized (de)stabilizations
as a sequence of regular Heegaard moves, and choosing suitable subdiagrams, this shows that
any pair of diagrams can be connected by a sequence of Heegaard moves and isotoping the
diagram in~$Y$.

To show that any two sequences of moves give the same map~$F_{\H,\H'}$, it suffices
to prove that if we compose the elementary maps along an arbitrary loop of diagrams,
we get the identity. Corresponding to this loop of moves, we can construct a loop of
gradient vector fields along~$\partial D^2$. Then we extend this to a generic
two-parameter vector field over~$D^2$. We subdivide the disk into small polygons, and
show that the composition around each small polygon is zero. For this end, one has to
understand the different types of codimension-two bifurcations that appear in such a
two-parameter family, and then translate these to loops of diagrams. There are many
cases which make the discussion rather complicated.
Along the codimension-one strata, one has to resolve the generalized (de)stabilizations,
pick suitable subdiagrams of the overcomplete diagrams,
and then write each small loop as a product of elementary loops.
Finally, we check that $HF$ has no monodromy for each such elementary loop.
Most of the elementary loops were already considered by Ozsv\'ath and
Szab\'o in~\cite{OSz10}, except for a new type of loop called a simple handleswap.

As mentioned earlier, $HF^\circ(Y,p)$ is independent of the choice of basepoint~$p$
when~$\circ$ is one of $+$, $-$, or $\infty$. However, $\HFh(Y,p)$ does depend
on~$p$, but the action of $\pi_1(Y,p)$ factors through $H_1(Y)/\text{Tors}$.

\subsection{Cobordism maps}

Given a $\spinc$-cobordism~$(W,\s)$ from $(Y_0,\s_0)$ to~$(Y_1,\s_1)$, Ozsv\'ath and Szab\'o associate to it
a homomorphism
\[
F^\circ_{W,\s} \colon HF^\circ(Y_0,\s_0) \to HF^\circ(Y_1,\s_1).
\]
This is defined in terms of a relative handle decomposition of~$W$ built on $Y_0 \times I$
with no 0- and 4-handles, and associating maps to each $i$-handle
attachment for $i \in \{1,2,3\}$. The 1-handle maps are similar to
the stabilization isomorphisms used in the proof of invariance: One
takes the connected sum of the diagram with a torus with a new $\a$- and a
new $\b$-curve that intersect in a single point~$\theta$, and map each intersection
point $\x \in \T_\a \cap \T_\b$ to $\x \times \{\theta\}$. This is a chain
map if the complex structure on $\S$ is chosen such that connected sum neck is very long.
The 3-handle map is the inverse of the one-handle map.

The key ingredient is the map associated to a 2-handle attachment.
Suppose that~$W$ is obtained by attaching a 2-handle to~$Y_0$ along a framed knot~$K$.
Take a diagram
\[
\left(\S, \alphas = \{\a_1,\dots,\a_g\}, \betas_K =  \{\b_1, \dots,\b_{g-1} \} \right)
\]
of $Y_0 \setminus N(K)$. Then $\b_g$ is chosen such that it represents the
meridian of~$K$, while $\b_g'$ is given by the surgery slope along~$K$, and we
require that $|\b_g \cap \b_g'| = 1$. We write
$\betas = \betas_K \cup \{\b_g\}$ and $\betas' = \betas_K' \cup \{\b_g'\}$,
where $\betas_K' = \{\b_1', \dots, \b_{g-1}'\}$ is a small isotopic copy of~$\betas_K$
such that $|\b_i \cap \b_i'| = 2$ for every $i \in \{1, \dots, g-1\}$.
Then $(\S,\alphas,\betas)$ is a diagram of $Y_0$, while $(\S,\alphas,\betas')$
is a diagram of~$Y_1$. Furthermore, $(\S,\betas, \betas')$ is a diagram of
$\#^{g-1} (S^1 \times S^2)$, and there is a distinguished ``top'' generator $\theta \in \T_\b \cap \T_{\b'}$.
The cobordism map is defined by counting pseudo-holomorphic triangles in
$\sym^g(\S)$ with edges mapping to $\T_\a$, $\T_\b$, and $\T_{\b'}$, respectively,
and such that one corner goes to~$\theta$, and whose homotopy class corresponds to~$\s$
in a suitable sense.

The difficult part is showing that the composition of
all the maps associated to the handle attachments is independent of
the choices made, including the choice of handle decomposition.
This composition is then denoted~$F_{W,\s}^\circ$.
For further details, we refer the reader to~\cite{OSz10, cob}.

\subsection{Computing {HF}}

What makes Heegaard Floer homology computable is the following observation.
Let $(\S,\alphas,\betas)$ be a Heegaard diagram, $\j$ a complex structure on $\S$, and let
\[
u \colon D^2 \to \sym^g(\S)
\]
be a pseudo-holomorphic disk with respect to $\sym^g(\j)$ connecting $\x$, $\y \in \T_\a \cap \T_\b$.
Then we can pull back the $g$-fold branched covering
\[
\S \times \sym^{g-1}(\S) \to \sym^g(\S)
\]
to obtain a $g$-fold branched covering
$p \colon S \to D^2$; i.e., the following diagram is commutative:
\[
\xymatrix{
  S \ar[r]^-{\overline{u}} \ar[d]^-p & \S \times \sym^{g-1}(\S) \ar[d] \ar[r]^-{\pi_1} & \S \\
  D^2  \ar[r]^-u & \sym^g(\S).}
\]
Let $\pi_1 \colon \S \times \sym^{g-1}(S) \to \S$ be the projection onto the first factor.
Then
\[
f = \pi_1 \circ \overline{u} \colon S \to \S
\]
is a holomorphic map.
The pair $(p,f)$ completely determines $u$. Indeed,
for $x \in D^2$, if $p^{-1}(x) = \{s_1, \dots, s_g\} \subset S$ (this is a multi-set,
with pre-images of branched points counted with multiplicity),
then
\[
u(x) = \{f(s_1), \dots, f(s_g)\} \in \sym^g(\S).
\]
In the opposite direction, given a $g$-fold branched cover $p \colon S \to D^2$
and a holomorphic map $f \colon S \to \S$ such that $f(p^{-1}(-i)) = \x$,
$f(p^{-1}(i)) = \y$, and the arcs in between $p^{-1}(-i)$ and $p^{-1}(i)$
alternatingly map to~$\alphas$ and~$\betas$, the above formula defines
a holomorphic representative of $\pi_2(\x,\y)$.

So finding the holomorphic representatives of a homotopy class $\phi \in \pi_2(\x,\y)$
is equivalent to finding $g$-fold branched coverings $p \colon S \to D^2$,
together with a holomorphic map $f \colon S \to \S$ such that $f_*[S]$ is the 2-chain $\cD(\phi)$,
and which satisfies the appropriate boundary conditions. Such maps~$f$ can be often found
using the Riemann mapping theorem, but in general there is no algorithm known for completely
determining their moduli space. Since the homology of the Heegaard Floer chain complex
(and more generally, its chain homotopy type) is independent of the choice of~$\j$ up to
isomorphism, in concrete computations it is often helpful to choose a degenerate
complex structure. If one knows some of the moduli spaces, together with the fact that
$\partial^2 = 0$, it is sometimes possible to work out~$\partial$ completely.

The above viewpoint was developed by Lipshitz~\cite{Lipshitz} into a self-contained
definition of Heegaard Floer homology that does not refer to Lagrangian intersection
Floer homology. He called this the ``cylindrical reformulation'' of Heegaard Floer homology.
Here, one studies pseudo-holomorphic curves in the 4-manifold $\S \times \RR \times I$
with suitable boundary conditions. Indeed, a holomorphic map
$v \colon S \to \S \times \RR \times I$ can be projected to the factors $\S$ and
$\RR \times I$ (which is conformally equivalent to $D^2 \setminus \{\pm i\}$)
to obtain a pair $(p,f)$, and vice versa.

An important result in his paper is
a formula for computing the Maslov index of an arbitrary domain $\cD$ connecting
intersection points $\x$ and $\y$. It consists of three terms. One is the Euler
measure $e(\cD)$ of $\cD$. To define this, assume that $\alphas$ and~$\betas$ meet at right
angles. If $S$ is a surface with $k$ acute right-angled corners and
$l$ obtuse right-angled corners, then
\[
e(S) = \chi(S) - k/4 + l/4.
\]
Every domain is a linear combination of such surfaces, and we extend~$e$ to domains linearly.
The other two terms are the averages $n_\x(\cD)$ and $n_\y(\cD)$ of the coefficients of~$\cD$ at the points
of~$\x$ and~$\y$, respectively (called point measures). So the Maslov index of~$\cD$ is given by
\[
\mu(\cD) = n_\x(\cD) + n_\y(\cD) + e(\cD).
\]

A homotopy class of Whitney disks~$\phi$ is called $\a$-injective if all of the multiplicities
of its domain~$\cD(\phi)$ are 0 or 1, if its interior
(i.e., the interior of the region with multiplicity 1) is disjoint from~$\alphas$, and its boundary contains
intervals in each~$\a_i$.
Using work of Oh~\cite{Oh2}, Ozsv\'ath and Szab\'o proved~\cite[Proposition~3.9]{OSz1} that
if a homotopy class of Whitney disks~$\phi$ is $\a$-injective, then we can perturb the $\a$-curves
such that the moduli space $\cM(\phi)$ of $\sym^g(\j)$-holomorphic disks is smoothly
cut out by its defining equation. So we can use the unperturbed complex
structure $\sym^g(\j)$ on $\sym^g(\S)$ in that case. This holds for example when the domain
of $\phi \in \pi_2(\x,\y)$ has coefficient one in an embedded rectangle or bigon, and is
zero elsewhere. In both cases, $\# \wM(\phi) = \pm 1$. This is the basis of the
algorithm of Sarkar and Wang~\cite{Suc} for computing~$\HFh$: One first performs isotopies
on the $\a$-curves in the Heegaard diagram $(\S,\alphas,\betas,z)$ until all the components
of $\S \setminus (\alphas \cup \betas)$ disjoint from~$z$ become bigons and rectangles.
Then they show that the domain of every rigid holomorphic disk with multiplicity zero at~$z$
is an embedded bigon or rectangle, and we saw above that each of these contributes~$\pm 1$
to the boundary map. Hence, the differential for the hat version becomes easy to compute,
without having to resort to complex analysis.

Manolescu, Ozsv\'ath, and Sarkar~\cite{MOS}
realized that one can use grid diagrams to algorithmically compute knot Floer homology.
A grid diagram is a multi-pointed Heegaard diagram on the torus, where every $\a$-curve
is a longitude and every $\b$-curve is a meridian, and these form a rectangular grid.
In each row and in each column there is exactly one~$z$ and one~$w$ basepoint.
This can be thought of as a sutured diagram for the knot complement, with several
sutures on each boundary torus. The Floer homology of this computes a stabilized
version of $\widehat{HFK}$, there is a factor of $\ZZ^2$ for each additional pair
of basepoints on each link component. Since every domain having multiplicity
zero at the basepoints is a rectangle, the boundary map
is completely combinatorial.

A more efficient algorithm for computing knot Floer homology is currently being
developed by Szab\'o, which is based on ideas coming from bordered Floer homology.
This can effectively compute the knot Floer homology of knots of 13 crossings, possibly
even more.

Ozsv\'ath and Szab\'o~\cite{OSz13} introduced an efficient algorithm
for computing Heegaard Floer homology for 3-manifolds obtained by plumbing
spheres along certain graphs. These include, for example, all Seifert fibred rational
homology spheres. Motivated by this algorithm, N\'emethi~\cite{lattice}
defined an invariant, called~\emph{lattice homology}, for any negative definite plumbed 3-manifold.
He conjectured that this agrees with~$HF^-$ for rational homology 3-spheres:

\begin{conj}
For any negative definite plumbing tree~$G$, the lattice homology $\mathbb{HF}^-(G)$
agrees with the Heegaard Floer homology $HF^-(Y_G)$ of the corresponding
3-manifold~$Y_G$.
\end{conj}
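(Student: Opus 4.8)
The plan is to build the isomorphism from the Manolescu--Ozsv\'ath integral surgery formula, inducting on the number $n$ of vertices of $G$ (or, following Ozsv\'ath--Szab\'o, on the number of ``bad'' vertices). Since $G$ is a tree, $Y_G$ is a rational homology sphere obtained by integral surgery on a framed forest of unknots $L_G \subset S^3$ whose linking matrix is the negative definite intersection form of the plumbing. The surgery formula presents $HF^-(Y_G)$ as the homology of a hypercube-shaped complex $\mathcal{C}^-(L_G)$ assembled from the $\spinc$-decorated sublink Floer complexes of $L_G$, with cube maps given by inclusion-type maps and their higher diagonal compositions. The first step is to compute $\mathcal{C}^-(L_G)$ explicitly: each component is an unknot, so each sublink Floer complex is the trivial ``large surgery'' model, and the whole complex collapses to a combinatorial object whose generators are indexed by the lattice $\ZZ^n$ of characteristic covectors, with cube grading recording the number of negative coordinates of the relevant vertex.

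The second, technically central step is to identify this collapsed complex with N\'emethi's lattice chain complex $\mathbb{CF}^-(G)$: match generators with lattice points, show that the internal differential of $\mathcal{C}^-(L_G)$ reproduces the one built from the weight function $\chi$, and show that the higher diagonal maps of the hypercube are exactly the extra operations in the lattice differential. The absolute $\QQ$-grading produced by the surgery formula -- carrying the shifts $(c_1^2 - 2\chi - 3\sigma)/4$ of the component cobordisms -- must be matched with the combinatorial normalization of lattice homology, compatibly with the $\ZZ[U]$-module structure and for every $\s \in \spinc(Y_G)$, using that $\spinc(Y_G)$ is identified with $\ZZ^n$ modulo the intersection matrix on both sides.

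For the induction, the base case is a linear tree (a lens space), or the empty graph ($S^3$), where both $HF^-$ and $\mathbb{HF}^-$ are computed directly and agree. The inductive step changes the framing at a leaf and invokes the integer surgery exact triangle in the minus flavor, relating $Y_G$ to plumbings with fewer vertices or fewer bad vertices; this mirrors the Ozsv\'ath--Szab\'o algorithm for plumbings with at most one or two bad vertices, and there is a parallel reduction on the lattice side. One organizes the induction so that the auxiliary sublinks that appear, some living in $\#^b(S^1\times S^2)$ with $b>0$, are handled via strong admissibility and truncation.

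The main obstacle is the grading identification in the second step. The surgery formula determines $HF^-(Y_G,\s)$ only as the homology of a complex well-defined up to filtered chain homotopy equivalence, so fixing the absolute $\QQ$-grading requires careful bookkeeping of the grading shifts of all the cobordism maps entering the hypercube, together with a proof that the resulting function on $\spinc(Y_G)$ equals the combinatorially defined grading on $\mathbb{CF}^-(G)$ -- essentially a Riemann--Roch-type identity for how $\chi$ behaves under the reductions. A secondary difficulty is extracting enough naturality from the surgery formula to promote an isomorphism of graded groups to one of $\ZZ[U]$-modules respecting the $\spinc$-splitting, and, should one want the non-tree case as well, controlling the $b_1 > 0$ contributions that then survive in $Y_G$ itself.
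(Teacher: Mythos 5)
This statement is a \emph{conjecture} in the paper (due to N\'emethi), not a theorem: the survey offers no proof, and at the time of writing none was known. What you have written is a research program, not a proof, and the two places where it is vague are exactly the places where the problem is open.

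First, the ``technically central step'' of identifying the collapsed surgery hypercube with N\'emethi's lattice complex is asserted rather than argued. The generators do match up with lattice points, but the differential of the Manolescu--Ozsv\'ath complex involves maps counting holomorphic polygons (and, in the truncated/collapsed model, higher diagonal components) that are not combinatorially determined; showing that these agree with the purely combinatorial lattice differential built from the weight function $\chi$ is the entire content of the conjecture, not a bookkeeping exercise. Note also that while each component of $L_G$ is an unknot in $S^3$, the sublink complexes and the maps between them are taken relative to the framings of the \emph{other} components, so they are not all ``trivial large surgery models.'' Second, your inductive step via the surgery exact triangle only determines $HF^-$ up to an extension problem, and the known arguments (Ozsv\'ath--Szab\'o's computation for plumbings, and its extensions) close this extension problem only when the number of bad vertices is at most one or two; that is precisely why the general case remained open. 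An honest version of your argument reproves those partial results but does not reach the full conjecture. If you want to present this material, present it as a discussion of evidence for the conjecture and of the known special cases, not as a proof.
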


\section{Sutured Floer homology}

Sutured Floer homology, defined by
the author~\cite{sutured}, is an invariant of certain 3-manifolds with boundary.
It generalizes the hat version of Heegaard Floer homology,
basically by allowing multiple basepoints, and letting the number of $\a$- and $\b$-curves
differ from the genus of the Heegaard surface. We now review the necessary
definitions.

Sutured manifolds were introduced by Gabai~\cite{Gabai}, mainly to prove the
Property~R conjecture: If zero framed Dehn surgery on a knot~$K$ in~$S^3$
gives $S^1 \times S^2$, then $K$ is the unknot. A sutured manifold is
a pair $(M,\g)$, where~$M$ is a compact oriented 3-manifold with boundary,
and $\g \subset \partial M$, the sutures, is an oriented 1-manifold that divides the boundary
into two subsurfaces $R_+(\g)$ and $R_-(\g)$. The orientation of $R_+(\g)$
agrees with that of~$\partial M$, while~$R_-(\g)$ and $\partial M$ are oriented
oppositely.
Often, $\g$ is considered to be a closed regular neighborhood of the sutures,
while the sutures themselves are denoted by~$s(\g)$. In the latter case, one should think
of~$M$ as having corners along~$\partial \g$. We write $R(\g) = R_+(\g) \cup R_-(\g)$.
A sutured manifold $(M,\g)$ is \emph{taut} if $M$ is irreducible, and $R(\g)$ is incompressible
and Thurston norm minimizing in its homology class in $H_2(M,\g)$.

We say that $(M,\g)$ is a \emph{product sutured manifold} if there is a compact oriented
surface~$R$ such that $M = R \times I$ and $\g = \partial R \times I$. Recall that
after Conjecture~\ref{conj:free}, we defined the sutured manifolds
$Y(p)$ and $Y(L)$ for a based 3-manifold $(Y,p)$ and a link $L \subset Y$, respectively.
The sutured manifold $Y(p)$ is obtained from $Y$ by removing a ball about~$p$, and
putting a single suture on the resulting~$S^2$ boundary component. The sutured manifold
$Y(L)$ is obtained from $Y$ by removing a tubular neighborhood of~$L$, and the sutures
consist of two oppositely oriented meridians on each torus boundary component.

Gabai used sutured manifolds to study taut foliations on 3-manifolds. A foliation
of a sutured manifold $(M,\g)$ is a codimension-one, transversely oriented foliation that is transverse to~$\g$,
and such that each component of $R(\g)$ is a leaf. Such a foliation~$\cF$ is called \emph{taut} if
there is a properly embedded curve or arc in~$M$ that is transverse to the foliation, and
which intersects each leaf of~$\cF$ at least once. A deep theorem of Gabai~\cite{Gabai}
states that a sutured manifold~$(M,\g)$ carries a taut foliation if and only if it is taut.
The difficult direction is showing that if $(M,\g)$ is taut, then it carries a taut foliation.
The idea of the proof is the following. One can define a complexity for sutured manifolds,
which is minimal for products. Gabai showed that if~$(M,\g)$ is taut and not a product,
then there is always a properly embedded, oriented surface~$S$ such that if we cut~$M$ along~$S$
and add the negative side of~$S$ to~$R_-$ and the positive side to~$R_+$, then
the resulting sutured manifold $(M',\g')$ is of strictly smaller complexity. This operation
is called a sutured manifold decomposition, and is denoted by
\[
(M,\g) \rightsquigarrow^S (M',\g').
\]
Hence one obtains a sequence of decompositions
\[
(M,\g) \rightsquigarrow^{S_1} (M_1,\g_1) \rightsquigarrow^{S_2} \dots \rightsquigarrow^{S_n} (M_n,\g_n)
\]
resulting in a product sutured manifold $(M_n,\g_n)$;
he calls such a sequence a \emph{sutured manifold hierarchy}.
A product sutured manifold carries an obvious taut foliation, namely the product foliation.
Given a sutured manifold decomposition $(M,\g) \rightsquigarrow^S (M',\g')$ and a taut foliation~$\cF'$
on~$(M',\g')$, Gabai constructs a taut foliation~$\cF$ on~$(M,\g)$, assuming that the surface~$S$
is \emph{well-groomed}. This is a technical condition on~$S$, and the sutured manifold hierarchy
starting with $(M,\g)$ and ending in a product can be chosen such that each decomposing surface ~$S_i$
is well-groomed. Starting with the product foliation on $(M_n,\g_n)$, we end up with a taut foliation
on~$(M,\g)$.

A sutured manifold~$(M,\g)$ is called \emph{balanced} if $\chi(R_+(\g)) = \chi(R_-(\g))$,
the manifold~$M$ has no closed components, and each component of~$\partial M$ has at least
one suture. Note that the first condition automatically holds when~$(M,\g)$ is taut.
The author~\cite{sutured} defined sutured Floer homology, which assigns a
finitely generated Abelian group $SFH(M,\g)$ to every balanced sutured manifold~$(M,\g)$.
It splits along \emph{relative $\spinc$-structures}. We define these next. Let $v_0$ be a vector
field that points into~$M$ along~$R_-(\g)$, points out of~$M$ along~$R_+(\g)$, and which is
tangent to~$\partial M$ and points from $R_-(\g)$ to $R_+(\g)$ along the sutures.
A relative $\spinc$-structure on $(M,\g)$ is the homology class of a nowhere zero vector field~$v$ on~$M$ such that
$v|_{\partial M} = v_0$, where two such vector fields $v$ and $v'$ are said to be homologous
if they are homotopic through nowhere zero vector fields relative to~$\partial M$ in the complement of
a ball in the interior of~$M$. We denote the set of $\spinc$-structures by $\spinc(M,\g)$, this is
an affine space over~$H_1(M)$. As claimed above, we have a splitting
\[
SFH(M,\g) = \bigoplus_{\s \in \spinc(M,\g)} SFH(M,\g,\s).
\]

The group $SFH(M,\g)$ is constructed in the spirit of the hat version of Heegaard Floer homology
for closed 3-manifolds. We can represent every balanced sutured manifold~$(M,\g)$ by
a multi-pointed Heegaard diagram $(\ol{\S},\alphas,\betas,\z)$, where $\ol{\S}$ is a closed oriented surface,
$\alphas = \{\a_1,\dots,\a_d\}$ and $\betas = \{\b_1,\dots,\b_d\}$ are sets of pairwise disjoint oriented
simple closed curves on~$\ol{\S}$, and $\z = \{z_1,\dots, z_k\}$ is a set of points on $\ol{\S} \setminus (\alphas \cup \betas)$.
Given such a multi-pointed diagram, we can associate to it a sutured manifold as follows. Take
$\S = \ol{\S} \setminus N(\z)$, and let $M$ be the 3-manifold obtained by attaching 3-dimensional 2-handles
to $\S \times I$ along $\a_i \times \{0\}$ and $\b_j \times \{1\}$ for every $i$, $j \in \{1,\dots,d\}$.
Finally, we take $\g = \partial \S \times I$ and $s(\g) = \partial \S \times \{1/2\}$. The subsurface
$R_-(\g)$ of $\partial M$ is obtained from $\S \times \{0\}$ by compressing it along the curves~$\a_i \times \{0\}$.
Similarly, $R_+(\g)$ is obtained from $\S \times \{1\}$ by compressing it along the curves~$\b_j \times \{1\}$.
The condition that $\chi(R_-(\g)) = \chi(R_+(\g))$ is hence equivalent to having the same number of $\a$- and
$\b$-curves. There will be at least one suture on each component of $\partial M$ if and only if there is
at least one basepoint in each component of $\ol{\S} \setminus \alphas$ and $\ol{\S} \setminus \betas$.
For naturality purposes, we consider $\S$ to be a subsurface of~$M$ that divides~$M$ into two ``sutured compression
bodies.'' Each $\a$-curve and $\b$-curve bounds a disk in one of these two sutured compression bodies.

To define $SFH(M,\g)$, we consider the tori $\T_\a = \a_1 \times \dots \times \a_d$ and $\T_\b = \b_1 \times \dots \times \b_d$
inside the symmetric product $\sym^d(\ol{\S})$. So, in the definition of $\HFh(Y)$ for a closed
3-manifold~$Y$, it was just a coincidence that the exponent of the symmetric product was given by the
genus of the Heegaard surface, in general, it is the number of the $\a$- and $\b$-curves. The balanced
condition on~$(M,\g)$ (in particular, that $\chi(R_+(\g)) = \chi(R_-(\g))$)
is essential in ensuring that $\T_\a$ and $\T_\b$ are both half-dimensional submanifolds of $\sym^d(\ol{\S})$.
For a suitable symplectic form on $\sym^d(\ol{\S})$, the tori $\T_\a$ and $\T_\b$ will be Lagrangian.
Then $SFH(M,\g)$ is defined to be the Lagrangian intersection Floer homology of $\T_\a$ and $\T_\b$
in $\sym^d(\ol{\S})$, where the boundary map counts disks disjoint from the hypersurfaces
\[
V_{z_i} = \{z_i\} \times \sym^{d-1}(\ol{\S})
\]
for $i \in \{1,\dots,k\}$.
More concretely, given intersection points $\x$, $\y \in \T_\a \cap \T_\b$, the coefficient
of $\y$ in $\partial \x$ is given by counting the algebraic number of points in the moduli
space $\wt{\cM}(\phi)$, where $\phi$ is a homotopy class of pseudo-holomorphic Whitney disks
connecting $\x$ and $\y$ of Maslov-index one, and intersecting each $V_{z_i}$ algebraically zero times.
This way we obtain the chain complex $CF(\ol{\S},\alphas,\betas,\z)$, whose homology is $SFH(\ol{\S},\alphas,\betas,\z)$.
For different diagrams, in~\cite{naturality}, we construct canonical isomorphisms, and the limit
of the arising transitive system of groups is~$SFH(M,\g)$.

Just like in the case of $\HFh$, we need to assume that the diagram satisfies a weak
admissibility condition, namely, that every non-zero periodic domain has a positive and a negative
coefficient. This translates to a monotonicity condition in the language of Floer homology.
Since every component of $\ol{\S} \setminus \alphas$ and $\ol{\S} \setminus \betas$ contains a basepoint (as $(M,\g)$
is balanced), there are no non-zero periodic domains with boundary a linear combination of only $\a$-curves,
or only $\b$-curves. Hence, there are no homotopy classes of disks with boundary lying entirely on $\T_\a$ or on $\T_\b$,
and disjoint from the hypersurfaces~$V_{z_i}$. This ensures that in the sutured Floer chain complex $\partial^2 = 0$,
as in the boundary of a 1-parameter family of pseudo-holomorphic Whitney disks, we do not have bubbles with boundary
completely on one of the two Lagrangians, and only broken flow-lines appear.

Similarly to the closed case, we can assign a relative $\spinc$-structure to every intersection point
$\x \in \T_\a \cap \T_\b$ by first taking a self-indexing Morse-function $f \colon M \to \RR$ and a
Riemannian metric on~$M$ such that~$f$ has no index zero or three critical points,
$R_-(\g) = f^{-1}(0.5)$, $R_+(\g) = f^{-1}(2.5)$, $\S = f^{-1}(1.5)$,
and the gradient vector field~$v$ of~$f$ induces the diagram $(\S,\alphas,\betas)$ in the sense that the
unstable manifolds of index one critical points intersect $\S$ in the $\a$-curves, while the stable manifolds
of the index two critical points intersect~$\S$ in the $\b$-curves. Corresponding to~$\x$, there is
a multi-trajectory~$\g_\x$ of~$v$ connecting the index one and two critical points, and passing through the points
of~$\x$. If we modify~$v$ in a neighborhood of~$\g_\x$ to a nowhere zero vector field~$v'$, the homology class
of~$v'$ will be the $\spinc$-structure~$\s(\x)$. If there is a topological Whitney disk connecting
$\x$, $\y \in \T_\a \cap \T_\b$, then $\s(\x) = \s(\y)$, hence the intersection points lying in a given
$\spinc$-structure generate a subcomplex of $CF(\ol{\S},\alphas,\betas,\z)$.
Each summand $SFH(M,\g,\s)$ carries a relative $\ZZ_{d(c_1(\s))}$-grading, where
$d(c_1(\s))$ is the divisibility of the Chern class $c_1(\s) \in H^2(M)$.

They key property of sutured Floer homology is that it behaves particularly well under sutured manifold
decompositions. In particular, when performing a decomposition we get a subgroup.
Let $(S,\partial S) \subset (M,\partial M)$ be a properly embedded surface. We say that a $\spinc$-structure
$\s \in \spinc(M,\g)$ is \emph{outer} with respect to~$S$ if it can be represented by a nowhere zero vector field~$v$
on~$M$ such that $v_p \neq -(\nu_S)_p$ for every $p \in S$, where $\nu_S$ is the unit normal vector field
of~$S$ with respect to some Riemannian metric on~$M$.
Let $(M',\g')$ be the sutured manifold obtained by decomposing $(M,\g)$ along~$S$.
Then outer $\spinc$-structures are exactly the ones that arise by taking a relative $\spinc$-structure
on $(M',\g')$, and gluing $S_-$ to $S_+$.
Let $R$ be a compact oriented surface with no closed components. We say that a curve $C \subset R$
is \emph{boundary-coherent} if either $[C] \neq 0$ in $H_1(R)$, or if $[C] = 0$ and $C$ is oriented
as the boundary  of its interior (i.e., the component $R_1$ of $R \setminus C$ that is disjoint from~$\partial R$
and satisfies $\partial R_1 = C$).
We say that the decomposing surface~$S$ is \emph{nice} if it is open, and for every component~$V$ of $R(\g)$,
the set of closed components of $S \cap V$ consists of parallel oriented boundary-coherent simple closed
curves. In particular, every well-groomed decomposing surface, in the terminology of Gabai, is nice.
Now we are ready to state~\cite[Theorem~1.3]{decomposition}, the \emph{decomposition formula}.

\begin{theorem} \label{thm:decomp}
Let $(M,\g)$ be a balanced sutured manifold, and let $(M,\g) \rightsquigarrow^S (M',\g')$ be
a sutured manifold decomposition such that~$S$ is nice. Then
\[
SFH(M',\g') \cong \bigoplus_{\s \in O_S} SFH(M,\g,\s).
\]
In particular, $SFH(M',\g')$ is a direct summand of $SFH(M,\g)$.
\end{theorem}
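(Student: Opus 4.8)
The plan is to reduce the general statement to a model situation via a suitable choice of Heegaard diagram adapted to the decomposing surface $S$, and then to identify the pieces by hand. First I would invoke the normalization of $S$ to a standard form: by an isotopy of $S$ rel $\partial M$ (and, where necessary, adding tubes to make $S$ connected and handle the closed components of $S\cap R(\g)$ using the boundary-coherence hypothesis), arrange that $S$ is \emph{nice} in the sense defined above and, crucially, that it is a \emph{polygonal} surface for some multi-pointed Heegaard diagram $(\ol{\S},\alphas,\betas,\z)$ of $(M,\g)$. The key technical input here is a surface diagram / adapted diagram statement: one can always find a balanced diagram in which $S$ is built from bigons and squares meeting the $\alphas$- and $\betas$-curves in a controlled way, so that $S$ is visible on $\S$ as a union of subsurfaces with corners. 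This is the step where the hypothesis ``$S$ nice'' is used — it is exactly what guarantees that such an adapted diagram exists with the closed components of $S\cap R(\g)$ being boundary-coherent, hence cappable, parallel curves.

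Given such an adapted diagram, the second step is to split the intersection points $\T_\a \cap \T_\b$ according to which side of $S$ they lie on. Concretely, the adapted diagram exhibits a distinguished region $P \subset \ol{\S}$ (the trace of $S$), and I would call an intersection point $\x$ ``outer'' if none of its coordinates lie in the ``bad'' part of $P$; one then checks that this combinatorial condition on $\x$ matches precisely the $\spinc$-theoretic notion of $\s(\x)$ being outer with respect to $S$, i.e.\ that $\s(\x) \in O_S$ iff $\x$ is an outer generator. This uses the vector-field description of $\s(\x)$ from the construction of $SFH$: modifying the gradient along the multi-trajectory $\g_\x$ keeps $v'$ away from $-\nu_S$ exactly when $\g_\x$ avoids the relevant part of $S$, which is the combinatorial condition. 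Thus the outer generators span a subgroup $\bigoplus_{\s\in O_S} SFH(M,\g,\s)$ of $SFH(M,\g)$ at the chain level.

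The third step is to recognize the diagram obtained by ``cutting along $S$'' and to match it with a diagram for $(M',\g')$: deleting $P$ (equivalently, pushing the diagram off $S$ to one side) and adjusting the basepoints $\z$ to record the new sutures $\g'$ yields a balanced multi-pointed Heegaard diagram for $(M',\g')$, whose generators are in bijection with the outer generators of the original diagram. The last step is the differential comparison: one must show that a holomorphic disk in $\sym^d(\ol{\S})$ connecting two outer generators has its domain supported away from the bad part of $P$, so that it descends to a disk counted by $\partial$ in the $(M',\g')$ diagram, and conversely. This is where positivity of domains and the niceness of $S$ combine: a disk leaving the outer region would be forced to have negative multiplicities or to cross a basepoint, either of which is impossible. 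Conclude that the outer chain complex is isomorphic as a complex to $CF$ of the cut diagram, so $SFH(M',\g') \cong \bigoplus_{\s\in O_S} SFH(M,\g,\s)$, and since this is the homology of a direct summand subcomplex, it is a direct summand of $SFH(M,\g)$.

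I expect the main obstacle to be the first step — producing the adapted (``$S$-polygonal'') balanced diagram in which the nice surface $S$ is simultaneously in normal form and compatible with the Heegaard splitting. Getting the bookkeeping right for the closed components of $S\cap R(\g)$ (using boundary-coherence to cap them off without disturbing balancedness or admissibility) and then verifying that the resulting cut diagram genuinely represents $(M',\g')$ with the correct sutures is the technical heart; once the diagram is in hand, the generator bijection and the differential comparison are essentially formal consequences of positivity and the basepoint conditions.
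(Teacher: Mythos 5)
Your overall strategy is the same as the paper's: put $S$ into a ``surface diagram'' $(\S,\alphas,\betas,P)$ in which a subsurface $P\subset\S$ with polygonal boundary $A\cup B$ (with $A\cap\betas=\emptyset$, $B\cap\alphas=\emptyset$) traces out $S$; define the outer generators as those $\x$ with $\x\cap P=\emptyset$ and identify them with the generators lying in $\spinc$-structures in $O_S$; build a diagram for $(M',\g')$ from the surface diagram (the paper does this by \emph{doubling} $P$ into two copies $P_A$ and $P_B$ rather than deleting it, since $M'$ contains two copies of $S$ in its boundary, but this is a minor point in a sketch); and finally compare differentials.

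The genuine gap is in your last step, and you have misplaced where the difficulty lies. You assert that once the adapted diagram exists, a rigid holomorphic disk connecting two outer generators must have domain supported away from $P$ ``by positivity and the basepoint conditions,'' so that the differential comparison is essentially formal. This is not true for a general surface diagram: the corners of the domain sit outside $P$, but the regions of $\S\setminus(\alphas\cup\betas)$ are not required to respect $\partial P=A\cup B$ (these arcs are not part of $\alphas\cup\betas$), so a positive, basepoint-free, Maslov index one domain between outer generators can perfectly well have positive multiplicity over $P$. This is exactly the step the paper identifies as the difficult part, and it is handled not by positivity alone but by an elaboration of the Sarkar--Wang algorithm: one winds the $\a$- and $\b$-curves until every component of $\S\setminus(\alphas\cup\betas\cup A\cup B)$ disjoint from $\partial\S$ is a bigon or rectangle, and then proves that every Maslov index one positive domain connecting two outer generators is an embedded bigon or rectangle which lifts through the two-to-one projection $p\colon\S'\to\S$ to the doubled diagram. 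Without this ``nice position'' argument (or some substitute, such as the neck-stretching proof of Grigsby--Wehrli or Zarev's approach), your proof does not close.
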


\begin{proof}[Sketch of proof]
The idea is that if~$S$ is nice, then one can find a diagram $(\S,\alphas,\betas)$ of $(M,\g)$
that essentially contains~$S$ as a subsurface. More precisely, there is a subsurface~$P$ of~$\S$
such that $\partial P$ is a polygonal curve
on~$\S$ with corners being the points of~$P \cap \partial \S$ (recall that $\partial \S = s(\g)$).
Furthermore, we can label the edges of $\partial P$ alternatingly~$A$ and~$B$ (in particular, $\partial P = A \cup B$)
such that $A \cap \betas = \emptyset$, $B \cap \alphas = \emptyset$,
and we obtain~$S$ after smoothing the corners of
\[
(B \times [0,1/2]) \cup (P \times \{1/2\}) \cup (A \times [1/2,1]) \subset M.
\]
We call the tuple $(\S,\alphas,\betas,P)$ a \emph{surface diagram}.

Given a surface diagram, we can construct a diagram $(\S',\alphas',\betas')$ of $(M',\g')$ as follows.
To obtain~$\S'$, we take two copies~$P_A$ and~$P_B$ of~$P$, then glue $P_A$ to $\S \setminus P$ along~$A$,
and then we glue~$P_B$ to the resulting surface along~$B$. Loosely speaking, we have doubled~$P$. There is
a projection map $p \colon \S' \to \S$ that is two-to-one over~$P$ and is one-to-one over $\S \setminus P$.
Then we let $\alphas' = p^{-1}(\alphas) \setminus P_B$ and $\betas' = p^{-1}(\betas) \setminus P_A$; i.e., we
lift the $\a$-curves to~$P_A$ and the $\b$-curves to~$P_B$.

We call a generator  $\x \in \T_\a \cap \T_\b$ outer if $\x \cap P = \emptyset$. These are exactly the intersection
points with $\s(\x) \in O_S$. We denote the set of outer generators~$O_P$. The projection map~$p$
gives a bijection between $\T_{\a'} \cap \T_{\b'}$ and $O_P$. The difficult part of the proof is showing that
for some choice of $\alphas$ and $\betas$, this bijection is an isomorphism of chain complexes.
To achieve this, we elaborate on the Sarkar-Wang algorithm~\cite{Suc}, and wind the $\a$- and $\b$-curves until
each component of $\S \setminus (\alphas \cup \betas \cup A \cup B)$ disjoint from $\partial \S$
becomes either a bigon or a rectangle. Then one can show that the domain of every Maslov index one domain
in $(\S,\alphas,\betas)$ connecting two elements of~$O_P$ is an embedded bigon or rectangle that can be lifted
to a corresponding bigon or rectangle via the projection map~$p$. Hence, the map~$p$ establishes an isomorphism
between $CF(\S',\alphas',\betas')$ and the subcomplex of $CF(\S,\alphas,\betas)$ generated by~$O_P$.
\end{proof}

This theorem has several generalizations, which provide alternative proofs to the original
statement. One is a gluing formula for convex decomposition due to Honda, Kazez, and Mati\'c~\cite{TQFT}.
Given a convex decomposition $(M,\g) \rightsquigarrow (M',\g')$, they define a gluing map
$SFH(M',\g') \to SFH(M,\g)$, and show that this is an embedding for a sutured
manifold decomposition. Another generalization is due to Grigsby and Wehrli~\cite{GW},
who prove a decomposition formula for sutured-multi-diagrams via a neck-stretching argument.
A fourth proof is due to Zarev~\cite{Zarev}.

The decomposition formula has numerous nice implications. To be able to state these, we
first review the definition of the hat version of knot Floer homology in terms
of sutured Floer homology. Given a knot or link $K$ in a closed oriented 3-manifold~$Y$,
let $Y(K)$ denote the knot complement, together with two oppositely oriented meridional
sutures on each boundary torus. Then we let
\[
\widehat{HFK}(Y,K) = SFH(Y(K)).
\]
If we fix a Seifert surface~$S$ of $K$, the group $\widehat{HFK}(Y,K)$ carries
the so called Alexander grading, which
essentially comes from the grading by $\spinc(Y(K))$. The correspondence is
via evaluating a relative version of $c_1(\s)$ on $[S]$: if
$\langle c_1(\s) , [S] \rangle = 2i$, then $SFH(Y(K),\s)$ lies in Alexander grading~$i$.
The following is a special case of~\cite[Theorem~1.5]{decomposition}.

\begin{prop} \label{prop:HFK}
Let~$K$ be a null-homologous knot in a rational homology 3-sphere~$Y$, and let
$S$ be a Seifert surface of~$K$. Then
\[
SFH(Y(S)) \cong \widehat{HFK}(Y,K,g(S)).
\]
\end{prop}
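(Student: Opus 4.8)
The plan is to derive Proposition~\ref{prop:HFK} from the decomposition formula, Theorem~\ref{thm:decomp}, applied to the sutured knot complement $Y(K)$ decomposed along the Seifert surface $S$; the only substantive point is the identification of the outer $\spinc$-structures with the top Alexander grading. Write $M = Y \setminus N(K)$ for the underlying manifold, and let $Y(S) = (M',\g')$ be the sutured manifold obtained by decomposing $Y(K)$ along a push-in of $S$, so that $Y(K) \rightsquigarrow^S Y(S)$. First I would realise $S$ as a decomposing surface: isotope $S$ inside $M$ so that $\partial S$ is a Seifert longitude on $\partial N(K)$ meeting the two meridional sutures transversely in one point each, hence restricting to a single spanning arc of each of the two annuli $R_+(\g)$, $R_-(\g)$. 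Since $S$ is open and meets each component of $R(\g)$ only in an arc, the set of closed components of $S \cap V$ is empty for every component $V$ of $R(\g)$, so $S$ is nice (in fact well-groomed), and Theorem~\ref{thm:decomp} applies, giving
\[
SFH(Y(S)) \cong \bigoplus_{\s \in O_S} SFH(Y(K),\s).
\]
By definition $\HFK(Y,K) = SFH(Y(K))$, with the Alexander grading $A(\s) = \tfrac12\langle c_1(\s),[S]\rangle$ on the summand $SFH(Y(K),\s)$, so it remains to show $O_S = \{\,\s : A(\s) = g(S)\,\}$; the target of the displayed isomorphism is then precisely $\HFK(Y,K,g(S))$.

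For the inclusion $O_S \subseteq \{A(\s) = g(S)\}$ I would argue with vector fields. If $\s$ is outer, pick a nowhere-zero $v$ representing $\s$ with $v_p \neq -(\nu_S)_p$ for all $p \in S$. In a trivialisation of $TM|_S$ in which $\nu_S$ is constant, $v/|v|$ is a map $S \to S^2$ missing a point, hence null-homotopic into $S^2$ minus that point, so $v|_S$ is homotopic through nowhere-zero vector fields to $\nu_S$; consequently the oriented $2$-plane field $v^\perp|_S$ is homotopic to $TS$. Therefore $\langle c_1(\s),[S]\rangle$ equals the relative Euler number of $TS$ against the boundary framing determined by $v_0$, and a Poincar\'e--Hopf computation, in the normalisation that fixes the Alexander grading of $\HFK$, yields $\langle c_1(\s),[S]\rangle = 2g(S)$, i.e.\ $A(\s) = g(S)$. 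In particular every element of $O_S$ lies in the single fibre $A^{-1}(g(S))$.

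For the reverse inclusion I would compare the two sets as cosets. As recorded just before Theorem~\ref{thm:decomp}, $O_S$ is the image of the gluing map $\spinc(M',\g') \to \spinc(M,\g)$, which is equivariant for the homomorphism $H_1(M') \to H_1(M)$; hence $O_S$ is a coset of $\mathrm{im}(H_1(M') \to H_1(M))$. A Mayer--Vietoris computation for $M = M' \cup_{S \sqcup S} (S \times I)$ identifies this subgroup with $\ker\bigl(\mathrm{lk}\colon H_1(M) \to \ZZ\bigr)$, the kernel of the total linking number with $K$, which equals $\mathrm{Tors}(H_1(M))$. On the other hand $A$ is affine with $A(\s) - A(\s') = \langle \s - \s', [S]\rangle$, and since the meridian pairs to $1$ with $[S]$ while $\mathrm{Tors}(H_1(M))$ pairs trivially, the fibre $A^{-1}(g(S))$ is also a coset of $\mathrm{Tors}(H_1(M))$. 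Two cosets of one subgroup, one inside the other, coincide, so $O_S = A^{-1}(g(S))$ and the proof is complete.

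The hard part is the $\spinc$-theoretic identification, in two respects. In the first inclusion the delicate point is the normalisation: one must track the boundary condition $v_0$ and the conventions relating $c_1$ of a relative $\spinc$-structure to the Alexander grading, to be certain that ``outer'' lands in grading $g(S)$ rather than a shifted value. The second inclusion is only non-trivial when $H_1(Y) \neq 0$, where several $\spinc$-structures share the top grading and the coset argument is genuinely needed; for $Y = S^3$ both sides are a single $\spinc$-structure. Finally I would note that this proposition is precisely what powers Ozsv\'ath--Szab\'o's genus detection and the fibredness criterion $\HFK_*(K,g(K)) \cong \ZZ$: when $K$ is fibred the surface diagram can be chosen so that $Y(S)$ is a product sutured manifold, whence $SFH(Y(S)) \cong \ZZ$.
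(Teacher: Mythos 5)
Your strategy is exactly the paper's: apply Theorem~\ref{thm:decomp} to the decomposition $Y(K) \rightsquigarrow^{S} Y(S)$ and identify the outer $\spinc$-structures $O_S$ with the fibre of the Alexander grading over the extreme value. Two remarks on how your write-up compares with the paper's sketch. First, your reverse inclusion is a genuine improvement: the paper asserts that $O_S$ ``consists of a single element characterized by'' an evaluation of $c_1$, but when $\mathrm{Tors}\bigl(H_1(Y \setminus N(K))\bigr) \neq 0$ the fibre of the Alexander grading is a full torsion coset, and since $\HFK(Y,K,i)$ is by definition the sum of $SFH(Y(K),\s)$ over that whole coset, one really does need your argument --- equivariance of the gluing map plus the Mayer--Vietoris identification of $\mathrm{im}\bigl(H_1(M') \to H_1(M)\bigr)$ with the torsion subgroup --- to see that $O_S$ exhausts the fibre. (Your niceness check for $S$ is also a detail the paper omits.) Second, the one step you do not carry out is precisely where your answer disagrees with the paper's: the paper, following~\cite{decomposition}, computes that outer structures satisfy $\langle c_1(\s),[S]\rangle = -2g(S)$, and then finishes with the symmetry $\HFK(Y,K,-g(S)) \cong \HFK(Y,K,g(S))$. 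You assert $+2g(S)$ by appeal to ``a Poincar\'e--Hopf computation in the normalisation that fixes the Alexander grading,'' but the naive relative Euler number of $TS$ is $\chi(S) = 1-2g(S)$, so the boundary trivialisation $v_0$ genuinely matters and the sign is not free. This does not threaten the Proposition --- either sign lands in $\HFK(Y,K,\pm g(S))$ and the symmetry of knot Floer homology identifies the two groups --- but as written your normalisation claim is unverified and apparently opposite to the reference; you should either perform the relative Euler class computation or, as the paper does, accept whichever sign emerges and invoke the symmetry explicitly.
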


\begin{proof}[Sketch of proof]
Recall that $Y(S)$ is the sutured manifold obtained by decomposing~$Y(K)$ along~$S$.
By the decomposition formula, all we need to do is identify the subset~$O_S$ of
$\spinc(Y(K))$ that survive the decomposition. It turns out that~$O_S$
consists of a single element,  characterized by $\langle c_1(\s), [S] \rangle = - 2g(S)$.
Hence
\[
SFH(Y(S)) \cong \widehat{HFK}(Y,K,-g(S)) \cong \widehat{HFK}(Y,K,g(S)),
\]
where the second isomorphism follows from a simple symmetry property of knot
Floer homology.
\end{proof}

This allows us to translate results on sutured Floer homology to knot Floer homology.
The following theorem says that sutured Floer homology detects tautness.

\begin{theorem} \label{thm:taut}
Let $(M,\g)$ be an irreducible balanced sutured manifold. Then $(M,\g)$ is taut
if and only if $SFH(M,\g) \neq 0$.
\end{theorem}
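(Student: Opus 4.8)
The plan is to prove the two implications separately; the forward direction is short once we have the decomposition formula, while the converse needs an induction.

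\emph{Tautness implies $SFH(M,\g)\neq 0$.} A taut sutured manifold is in particular irreducible, so Gabai's theory furnishes a sutured manifold hierarchy
\[
(M,\g)\rightsquigarrow^{S_1}(M_1,\g_1)\rightsquigarrow^{S_2}\cdots\rightsquigarrow^{S_n}(M_n,\g_n)
\]
ending in a product sutured manifold $(M_n,\g_n)$, in which every decomposing surface $S_i$ may be taken well-groomed, hence nice in the sense preceding Theorem~\ref{thm:decomp}. A product sutured manifold $(R\times I,\partial R\times I)$ admits a Heegaard diagram with surface $R$ and no $\a$- or $\b$-curves, so its chain complex is $\ZZ$ and $SFH(M_n,\g_n)\cong\ZZ$. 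Applying Theorem~\ref{thm:decomp} along the hierarchy shows each $SFH(M_{i+1},\g_{i+1})$ is a direct summand of $SFH(M_i,\g_i)$, so $\ZZ\cong SFH(M_n,\g_n)$ is a direct summand of $SFH(M,\g)$; in particular $SFH(M,\g)\neq 0$.

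\emph{$SFH(M,\g)\neq 0$ implies tautness.} I would argue the contrapositive, by induction on Gabai's complexity, which drops strictly under a nontrivial decomposition along a nice surface. So suppose $(M,\g)$ is irreducible but not taut. Since $M$ is irreducible, $R(\g)$ then fails to be Thurston norm minimizing in $H_2(M,\g)$ (if $R(\g)$ is merely compressible, one first compresses it to reduce to this case, the sphere/disk degeneracies being removed using irreducibility). Hence there is a properly embedded surface $S$ with $[S]=[R_+(\g)]$ in $H_2(M,\partial M)$ and $x(S)<x(R_+(\g))$; after the standard normalising isotopies and surgeries (which use irreducibility to discard trivial closed components) we may take $S$ to be nice, and then $S$ separates $M$ into $M_1\supseteq R_-(\g)$ and the cobordism $M_2$ from $S$ to $R_+(\g)$. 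The crucial point is that $O_S=\spinc(M,\g)$: as $[S]=[R_+(\g)]$, a Mayer--Vietoris argument shows $H_1(M_1)\oplus H_1(M_2)\to H_1(M)$ is onto, so every class in $H_1(M)$ is carried by a loop missing $S$; starting from a reference vector field agreeing with $\nu_S$ near $S$ (available since $v_0$ points out of $M$ along $R_+(\g)$) and applying Reeb turbularization along such loops, every $\spinc$-structure is realised by a field avoiding $-\nu_S$ on $S$.

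The decomposition formula then gives
\[
SFH(M,\g)\;\cong\;SFH(M_1,\g_1)\otimes SFH(M_2,\g_2).
\]
In $(M_2,\g_2)$ the boundary pieces $R_-(\g_2)$ (a copy of $S$) and $R_+(\g_2)=R_+(\g)$ are homologous, yet $x(S)<x(R_+(\g))$, so $R_+(\g_2)$ is not norm minimizing and $(M_2,\g_2)$ is not taut; since $M_1\neq\emptyset$ it has strictly smaller complexity, so the induction hypothesis gives $SFH(M_2,\g_2)=0$ and hence $SFH(M,\g)=0$. The delicate part of the whole argument is this converse: one must manufacture a decomposing surface that is simultaneously nice, norm-reducing, and homologous to $R_+(\g)$, check that the normalising moves preserve the homological condition, dispose of the compressible and degenerate cases, and confirm that Gabai's complexity really decreases so the induction closes --- all of which leans on Gabai's sutured manifold machinery.
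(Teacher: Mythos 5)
Your proof of the forward implication (taut $\Rightarrow$ $SFH\neq 0$) is exactly the paper's argument: Gabai's hierarchy through well-groomed, hence nice, surfaces, $SFH\cong\ZZ$ for the terminal product, and Theorem~\ref{thm:decomp} applied down the hierarchy. That part is fine.

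The converse is where the proposal breaks down, and the central step is not merely unjustified but false. If $S$ is a decomposing surface with $[S]=[R_+(\g)]$ and $x(S)<x(R_+(\g))$, then (after discarding trivial components) $\chi(S)\neq\chi(R_+(\g))$. Decomposing along such an $S$ produces pieces with $R_-(\g_2)\cong S$ and $R_+(\g_2)=R_+(\g)$, so $\chi(R_-(\g_2))\neq\chi(R_+(\g_2))$: the pieces are \emph{not balanced}, $SFH(M_i,\g_i)$ is not even defined, and Theorem~\ref{thm:decomp} does not apply. Correspondingly, your claim that $O_S=\spinc(M,\g)$ fails at the first step: the ``reference vector field agreeing with $\nu_S$ near $S$'' does not exist. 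A nowhere-zero field equal to $v_0$ on $\partial M$ and avoiding $-\nu_S$ on $S$ is, over the $2$-complex $S$, homotopic to $\nu_S$, so its existence is equivalent to the existence of a relative $\spinc$-structure on $(M_2,\g_2)$ --- which is obstructed precisely by the Euler characteristic mismatch $\chi(S)\neq\chi(R_+(\g))$. So $O_S=\emptyset$, not everything; your Mayer--Vietoris/turbularization argument only shows that the set of outer structures is either empty or all of $\spinc(M,\g)$, and here it is empty. There are further problems even granting the formula: Gabai's complexity is defined and decreases only for carefully chosen decompositions of \emph{taut} manifolds, not for an arbitrary nice decomposition of a non-taut one, so the induction does not close; the pieces $(M_i,\g_i)$ need not be irreducible, so the inductive hypothesis need not apply to them; and no base case is given.

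The paper's proof of the converse is entirely different and much more elementary: when $(M,\g)$ is irreducible but not taut, one constructs a balanced diagram $(\S,\alphas,\betas)$ with $\T_\a\cap\T_\b=\emptyset$, so the chain complex, and hence $SFH(M,\g)$, vanishes outright. No decomposition formula is needed for that direction. If you want to salvage an inductive scheme, you would have to decompose along surfaces in classes other than $[R_+(\g)]$ (so that the result stays balanced), which changes the argument substantially.
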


\begin{proof}[Sketch of proof]
Suppose that $(M,\g)$ is taut. Then, by the work of Gabai~\cite{Gabai}, there is a sutured manifold
hierarchy
\[
(M,\g) \rightsquigarrow^{S_1} (M_1,\g_1) \rightsquigarrow^{S_2} \dots \rightsquigarrow^{S_n} (M_n,\g_n)
\]
such that each surface~$S_i$ is well-groomed, and hence nice.
Here $(M_n,\g_n)$ is a product, so it has a diagram with no $\a$- and $\b$-curves, and
$SFH(M_n,\g_n) \cong \ZZ$. By the decomposition formula, $SFH(M_n,\g_n)$ is a direct summand of
$SFH(M,\g)$, which implies that $\text{rk}\, SFH(M,\g) \ge 1$. When $(M,\g)$ is not taut, then
one can construct a diagram where $\T_\a \cap \T_\b = \emptyset$.
\end{proof}

Using Proposition~\ref{prop:HFK}, the above theorem translates to the following result of
Ozsv\'ath and Szab\'o~\cite{OSz6}, which states that knot Floer homology detects the genus of a knot.

\begin{theorem}
Let $K$ be a knot in the rational homology 3-sphere~$Y$ with Seifert genus~$g(K)$. Then
\[
\widehat{HFK}(K,g(K)) \neq 0;
\]
moreover, $\widehat{HFK}(K,i) = 0$ for $i > g(K)$.
\end{theorem}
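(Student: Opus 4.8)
The plan is to derive both assertions from Proposition~\ref{prop:HFK} and Theorem~\ref{thm:taut}. Fix a minimal genus Seifert surface $S$ of $K$ and write $g := g(K)$, so $g(S) = g$. By the connected sum formula for $\HFK$ --- a closed $3$-manifold contributes only a nonzero tensor factor $\HFh$, supported in Alexander grading $0$ --- we may assume $Y$ is irreducible; isotoping $K$ out of any ball it may lie in (or, if $K$ does lie in a ball, passing to the corresponding knot in $S^3$) we may further assume that the knot exterior $Y \setminus N(K)$ is irreducible. Since $S$ has minimal genus it is incompressible and Thurston norm minimizing, so the sutured manifold $Y(S)$ obtained by decomposing $Y(K)$ along $S$ is irreducible, and its $R_+$ and $R_-$, being copies of $S$, are incompressible and norm minimizing; hence $Y(S)$ is taut. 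Theorem~\ref{thm:taut} then gives $SFH(Y(S)) \neq 0$, and Proposition~\ref{prop:HFK} identifies this group with $\HFK(Y,K,g)$. This proves $\HFK(K,g(K)) \neq 0$.

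For the vanishing statement I would work with a Heegaard diagram subordinate to $S$. In such a diagram the Alexander grading of a generator $\x$ is $A(\x) = \tfrac12\langle c_1(\s(\x)), [S]\rangle$, and the standard adjunction-type bound on the relative Chern class --- equivalently, the fact that a minimal genus Seifert surface yields a diagram all of whose generators have $|A(\x)| \le g$ --- confines the chain complex, and hence $\HFK(Y,K)$, to Alexander gradings between $-g$ and $g$. In particular $\HFK(K,i) = 0$ for $i > g$. In the language of surface diagrams this is the assertion that the outer generators of a diagram encoding $S$ realize the extremal value $A(\x) = -g$ and that no generator lies strictly below it; the symmetry $\HFK(Y,K,i) \cong \HFK(Y,K,-i)$ supplies the bound from above.

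The substance of the proof lies entirely in the non-vanishing half, and within it in the chain ``$S$ minimal genus $\Rightarrow$ $Y(S)$ taut $\Rightarrow$ $SFH(Y(S)) \neq 0$''. The second implication is Theorem~\ref{thm:taut}, whose proof rests on Gabai's construction of a sutured manifold hierarchy built from well-groomed (hence nice) surfaces and terminating in a product, combined with the decomposition formula Theorem~\ref{thm:decomp} applied along each stage of the hierarchy. The point demanding care is that Theorem~\ref{thm:taut} is stated for \emph{irreducible} sutured manifolds, which is precisely why the reductions to irreducible $Y$, irreducible $Y \setminus N(K)$, and an incompressible minimal genus $S$ are carried out before decomposing. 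The vanishing half, by contrast, is soft: it is the sutured Floer shadow of the classical fact that the breadth of the Alexander polynomial is bounded by twice the genus, and it uses none of the sutured machinery beyond the definition of the Alexander grading.
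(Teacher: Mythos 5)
Your proof is correct, and its non-vanishing half is exactly the paper's argument: decompose $Y(K)$ along a minimal genus Seifert surface $S$, observe that $Y(S)$ is taut, and combine Theorem~\ref{thm:taut} with Proposition~\ref{prop:HFK}. Your explicit reductions to an irreducible $Y$ and irreducible knot exterior are a welcome refinement, since Theorem~\ref{thm:taut} is stated for irreducible sutured manifolds and the paper simply asserts that $Y(S)$ is irreducible. Where you genuinely diverge is the vanishing statement. You invoke the classical adjunction-type bound --- a diagram subordinate to a minimal genus $S$ has no generator in Alexander grading below $-g$, and the symmetry $\HFK(Y,K,i) \cong \HFK(Y,K,-i)$ supplies the upper bound --- which is Ozsv\'ath and Szab\'o's original route and is perfectly valid. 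The paper instead stays entirely inside the sutured framework: for each $i > g(K)$ it chooses a \emph{stabilized} Seifert surface $S$ with $g(S) = i$, applies Proposition~\ref{prop:HFK} to identify $\HFK(K,i)$ with $SFH(Y(S))$, and observes that $Y(S)$ is irreducible but \emph{not} taut because $S$ fails to be norm-minimizing, so the converse direction of Theorem~\ref{thm:taut} forces $SFH(Y(S)) = 0$. The paper's version treats both halves with a single mechanism (at the mild cost of needing Proposition~\ref{prop:HFK} and the tautness dichotomy for non-minimal Seifert surfaces), whereas yours imports an external, if standard, fact about relative Chern classes of generators and does not need to discuss non-taut decompositions at all. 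Both arguments are sound.
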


\begin{proof}
For every $i \ge g(K)$, we can choose a Seifert surface~$S$ for~$K$ such that $g(S) = i$.
Consider the decomposition $Y(K) \rightsquigarrow^S Y(S)$.
Proposition~\ref{prop:HFK} implies that $\widehat{HFK}(K,i) \cong SFH(Y(S))$.
The sutured manifold $Y(S)$ is irreducible, and it is taut if and only if $g(S) = g(K)$,
so the result follows from Theorem~\ref{thm:taut}.
\end{proof}

Note that the original proof of the above theorem also relied on Gabai's result on
sutured manifold hierarchies, plus a Theorem of Eliashberg and Thurston that
every taut foliation can be perturbed into a tight contact structure, and then showing
that the associated contact invariant in Heegaard Floer homology is non-zero via
Stein fillings and Lefshetz pencils. The proof presented here only relies on
Gabai's theorem and the decomposition formula for $SFH$.

We say that a decomposing surface~$S$ in $(M,\g)$ is \emph{horizontal} if it is open and incompressible,
$\partial S  = s(\g)$, $[S] = [R_+(\g)]$ in $H_2(M,\g)$, and $\chi(S) = \chi(R_+(\g))$.
The sutured manifold $(M,\g)$ is \emph{horizontally prime} if every horizontal surface is parallel
to either $R_+(\g)$ or $R_-(\g)$. It follows from the decomposition formula that decomposing along
a horizontal surface does not change~$SFH$.

A properly embedded annulus~$A$ in $(M,\g)$ is called a \emph{product annulus} if one component
of $\partial A$ lies in $R_-(\g)$, while the other component lies in $R_+(\g)$.
The sutured manifold $(M,\g)$ is said to be \emph{reduced} if every incompressible product
annulus~$A$ in $(M,\g)$ is isotopic to a component of~$\g$ such that $\partial A$ stays in
$R(\g)$ throughout. The author showed in~\cite{decomposition} that if we have a sutured manifold
decomposition $(M,\g) \rightsquigarrow^{A} (M',\g')$ such that~$A$ is a nice product annulus,
then $SFH(M,\g) \cong SFH(M',\g')$. However, note that the $\spinc$-gradings might be different on the
two groups.

Now we state the following important result from~\cite{decomposition}.

\begin{theorem} \label{thm:product}
Suppose that $(M,\g)$ is a taut balanced sutured manifold that is not a product. Then
\[
\text{rk}\, SFH(M,\g) \ge 2.
\]
\end{theorem}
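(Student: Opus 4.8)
The plan is to reduce Theorem~\ref{thm:product} to Theorem~\ref{thm:taut} together with the decomposition formula, by producing a nice decomposing surface that splits off a non-trivial summand. The key point is that a taut balanced sutured manifold which is not a product must admit a ``useful'' decomposing surface. More precisely, I would first invoke Gabai's theory: since $(M,\g)$ is taut, it has a sutured manifold hierarchy
\[
(M,\g) \rightsquigarrow^{S_1} (M_1,\g_1) \rightsquigarrow^{S_2} \dots \rightsquigarrow^{S_n} (M_n,\g_n),
\]
where $(M_n,\g_n)$ is a product and each $S_i$ is well-groomed, hence nice. Because $(M,\g)$ is \emph{not} a product, we have $n \ge 1$, so the first decomposing surface $S = S_1$ is a non-trivial nice surface with $(M,\g) \rightsquigarrow^{S} (M_1,\g_1)$, and $(M_1,\g_1)$ is again taut.

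Next I would analyze the set $O_S \subseteq \spinc(M,\g)$ of outer $\spinc$-structures. By the decomposition formula (Theorem~\ref{thm:decomp}),
\[
SFH(M_1,\g_1) \cong \bigoplus_{\s \in O_S} SFH(M,\g,\s),
\]
and since $(M_1,\g_1)$ is taut, Theorem~\ref{thm:taut} gives $SFH(M_1,\g_1) \neq 0$, so $O_S$ is non-empty and contributes at least rank~$1$ to $SFH(M,\g)$. To get rank~$\ge 2$, the natural strategy is to exhibit a \emph{second} non-trivial summand coming from the \emph{other} side of the decomposition. Decomposing $(M,\g)$ along the orientation-reversed surface $-S$ (or, equivalently, considering the complementary set of $\spinc$-structures that are outer with respect to $-S$) produces another taut sutured manifold, call it $(M_1',\g_1')$, with $SFH(M_1',\g_1') \neq 0$ by the same argument, and the corresponding set $O_{-S}$ of outer $\spinc$-structures is disjoint from $O_S$ provided $S$ is not boundary-parallel --- which is guaranteed because a well-groomed surface in a non-product taut sutured manifold can be chosen to be non-separating in homology or otherwise essential, so that $O_S$ and $O_{-S}$ do not exhaust each other and in fact miss one another. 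Combining, $SFH(M,\g)$ contains two direct summands supported on disjoint sets of $\spinc$-structures, each of rank $\ge 1$, whence $\mathrm{rk}\, SFH(M,\g) \ge 2$.

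The main obstacle is precisely the disjointness claim: one must ensure that $O_S$ and $O_{-S}$ are disjoint and both proper, i.e.\ that the decomposing surface $S$ genuinely ``uses up'' part of $\spinc(M,\g)$ on each side. If $S$ were homologically trivial and boundary-parallel this would fail, but then $S$ would be the trivial decomposition, contradicting that $(M,\g)$ is not a product; one needs to argue carefully (e.g.\ using that a taut non-product sutured manifold is not reduced-and-horizontally-prime-and-product all at once, or by choosing $S$ in the hierarchy to be homologically essential) that an appropriate $S$ exists. A cleaner route, which I would pursue if the direct argument gets stuck, is to observe that $\langle c_1(\s), [S]\rangle$ takes at least two distinct values as $\s$ ranges over $\spinc(M,\g)$ with $SFH(M,\g,\s) \neq 0$ --- this follows from the fact that $O_S$ (maximizing the pairing) and $O_{-S}$ (minimizing it) are non-empty and the pairing is non-constant because $[S] \ne 0$ in $H_2(M,\g)$ for a non-trivial well-groomed surface --- and then each of these two values contributes at least rank~$1$ by Theorem~\ref{thm:taut} applied to the two taut pieces. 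Either way, the crux is extracting from Gabai's hierarchy a single decomposing surface whose two sides are both non-trivial, and that is where the care is needed.
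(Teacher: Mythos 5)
Your overall strategy --- produce two decompositions whose outer sets of $\spinc$-structures are disjoint, so that tautness of each piece (via Theorem~\ref{thm:taut}) contributes a separate rank-one summand through Theorem~\ref{thm:decomp} --- is exactly the paper's strategy. But the step you correctly identify as the crux is genuinely missing, and neither of your proposed fixes closes it. The claim that $O_S \cap O_{-S} = \emptyset$ whenever $[S] \neq 0$ in $H_2(M,\partial M)$ is false: for a horizontal surface, or for a product annulus or product disk, decomposition leaves $SFH$ unchanged, which by the decomposition formula means $O_S$ is the \emph{entire} support of $SFH(M,\g)$; such surfaces can perfectly well be homologically non-trivial, and the first surface of a Gabai hierarchy may well be of this kind. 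Your ``cleaner route'' is circular for the same reason: the assertion that $\langle c_1(\s), [S]\rangle$ is non-constant on the support of $SFH(M,\g)$ is precisely equivalent to what you are trying to prove (it fails, for instance, on a product sutured manifold with $b_1 > 0$, where the support is a single point even though $H_2(M,\partial M) \neq 0$), so the non-product hypothesis must be used in a substantive way. The paper's missing ingredient is a preprocessing step: first decompose $(M,\g)$ along horizontal surfaces and product annuli until it is \emph{reduced} and \emph{horizontally prime} --- these decompositions do not change $SFH$ --- and only then choose $S_+$ and $S_-$; the disjointness $O_{S_+} \cap O_{S_-} = \emptyset$ is established by a cut-and-paste argument that uses reducedness and horizontal primeness essentially.

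Two further points. First, you cannot take the second surface to be $-S$: tautness of a sutured manifold decomposition is orientation-sensitive, and the decomposition along the orientation-reverse of a taut decomposing surface need not be taut. The paper instead invokes the existence of nice \emph{taut} decomposing surfaces representing $\alpha$ and $-\alpha$ separately for an arbitrary non-zero $\alpha \in H_2(M,\partial M)$; these are generally different surfaces, not orientation-reverses of one another. Second, the paper first disposes of the case $H_2(M) \neq 0$ by the elementary observation that then $\chi(SFH(M,\g)) = 0$, which together with $\mathrm{rk}\, SFH(M,\g) \ge 1$ (tautness) already forces rank at least two; the remaining case $H_2(M) = 0$ is where the two-surface argument is run (and is used, e.g., to arrange that $S_\pm$ have no closed components). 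Your proposal omits this reduction and would need it, or something replacing it, to make the homological argument go through.
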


\begin{proof}[Sketch of proof]
First, we note that if $H_2(M) \neq 0$, then $\chi(SFH(M,\g)) = 0$. Since $(M,\g)$ is taut,
$\text{rk}\, SFH(M,\g) \ge 1$, so in fact $\text{rk } SFH(M,\g) \ge 2$.
Hence, it suffices to consider the case when $H_2(M) = 0$.

Our goal is to construct two different taut decompositions
\[
(M,\g) \rightsquigarrow^{S_+} (M_+',\g_+')
\text{ and } (M,\g) \rightsquigarrow^{S_-} (M_-',\g_-')
\]
such that $O_{S_+} \cap O_{S_-} = \emptyset$.
If we can achieve this, then the decomposition formula implies that $SFH(M,\g)$ has a
subgroup isomorphic with $SFH(M_+',\g_+') \oplus SFH(M_-',\g_-')$, which is of rank at least two
since $(M_+',\g_+')$ and $(M_-',\g_-')$ are both taut.

Before we can carry out the above plan, we decompose $(M,\g)$ along horizontal surfaces and product
annuli until it becomes reduced and horizontally prime. This does not change $SFH(M,\g)$.
Then we take an arbitrary non-zero class $\a \in H_2(M, \partial M)$. There are nice taut
decomposing surfaces $S_+$ and $S_-$ such that $[S_+] = \a$ and $[S_-] = -\a$ in $H_2(M, \partial M)$.
(We can assume that $S_-$ and $S_+$ have no closed components since $H_2(M) = 0$.)
The rest of the argument uses cut-and-paste techniques to show that $O_{S_+} \cap O_{S_-} = \emptyset$,
assuming that $(M,\g)$ is reduced and horizontally prime.
\end{proof}

Recall that $SFH(M,\g) \cong \ZZ$ for a product sutured manifold $(M,\g)$.
So, if $(M,\g)$ is irreducible, then $SFH(M,\g) \cong \ZZ$ if and only if $(M,\g)$ is a product.
Indeed, if $(M,\g)$ is not taut, then $SFH(M,\g) = 0$, and if it is taut but
not a product, then $\text{rk}\, SFH(M,\g) \ge 2$.
Using Proposition~\ref{prop:HFK}, we can translate this to the following result, which
states that knot Floer homology detects fibred knots. The genus one case was proved by Ghiggini~\cite{Ghiggini},
and the general case was proved simultaneously
by Ni~\cite{fibred, corrigendum} and the author~\cite{decomposition, polytope}.

\begin{theorem}
Let $K$ be a null-homologous knot in the oriented 3-manifold~$Y$ such that $Y \setminus K$ is
irreducible, and let~$S$ be a Seifert surface for~$K$. Then
\[
\text{rk } \widehat{HFK}(Y,K,[S],g(S)) = 1
\]
if and only if~$K$ is fibred with fibre~$S$.
\end{theorem}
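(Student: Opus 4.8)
\subsection*{Proof plan}

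The plan is to translate the statement into one about sutured Floer homology via the decomposition formula, and then invoke the product-detection results for $SFH$. Write $Y(K)$ for the sutured knot complement, so that $\widehat{HFK}(Y,K) = SFH(Y(K))$, and let $Y(S)$ denote the sutured manifold obtained by decomposing $Y(K)$ along the Seifert surface $S$ (which we may assume is nice after an isotopy). Exactly as in the proof of Proposition~\ref{prop:HFK}, the decomposition formula (Theorem~\ref{thm:decomp}) identifies $SFH(Y(S))$ with $\bigoplus_{\s \in O_S} SFH(Y(K),\s)$, and one checks that $O_S$ is precisely the set of relative $\spinc$-structures $\s$ with $\langle c_1(\s),[S]\rangle = -2g(S)$; hence $SFH(Y(S)) \cong \widehat{HFK}(Y,K,[S],-g(S)) \cong \widehat{HFK}(Y,K,[S],g(S))$, the last step by the symmetry of knot Floer homology. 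So it suffices to prove that $\text{rk}\, SFH(Y(S)) = 1$ if and only if $K$ is fibred with fibre $S$.

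For the ``only if'' direction, if $K$ is fibred with fibre $S$ then $Y \setminus N(K)$ is the mapping torus of a homeomorphism of $S$, and cutting along a fibre produces exactly the product sutured manifold $(S \times I,\partial S \times I)$; that is, $Y(S)$ is a product. Since $SFH$ of a product sutured manifold is $\ZZ$, its rank is $1$.

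For the ``if'' direction, suppose $\text{rk}\, SFH(Y(S)) = 1$. First reduce to the case that $Y(S)$ is irreducible: if $g(S) > g(K)$ then $\widehat{HFK}(Y,K,[S],g(S)) = 0$ by the genus-detection theorem, so the rank would be $0$, not $1$, and there is nothing to prove; hence $g(S) = g(K)$, so $S$ is minimal genus, therefore incompressible in the irreducible manifold $Y \setminus K$, and $Y(S)$ is irreducible. Now $\text{rk}\, SFH(Y(S)) = 1 \neq 0$, so by Theorem~\ref{thm:taut} the sutured manifold $Y(S)$ is taut; and by Theorem~\ref{thm:product} a taut balanced sutured manifold that is not a product has rank at least $2$, so $Y(S)$ must in fact be a product sutured manifold, say $Y(S) \cong (R \times I, \partial R \times I)$ with $R \cong S$. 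Recovering $Y(K)$ from $Y(S)$ by regluing the two copies $S_\pm$ of $S$ then exhibits $Y \setminus N(K)$ as the mapping torus of a homeomorphism $R \to R$, i.e.\ as a surface bundle over $S^1$ with fibre $S$; checking that the product structure is compatible with the meridional sutures shows that this is an honest fibration of the knot complement, so $K$ is fibred with fibre $S$.

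The main obstacle is Theorem~\ref{thm:product} itself --- that a non-product taut balanced sutured manifold has $SFH$ of rank at least two --- whose proof (sketched above) requires first reducing to the reduced, horizontally prime case and then producing two taut sutured manifold decompositions with disjoint sets of outer $\spinc$-structures via cut-and-paste arguments. Granting that, the remaining work is the bookkeeping identifying $O_S$ with the top Alexander grading relative to $[S]$, and the standard but not entirely trivial equivalence between ``$Y(S)$ is a product sutured manifold'' and ``$K$ is fibred with fibre $S$'', including the compatibility of the product structure with the sutures.
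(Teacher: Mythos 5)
Your proof is correct and follows the paper's route exactly: Proposition~\ref{prop:HFK} identifies $\widehat{HFK}(Y,K,[S],g(S))$ with $SFH(Y(S))$, and the dichotomy supplied by Theorems~\ref{thm:taut} and~\ref{thm:product} shows that an irreducible balanced sutured manifold has $SFH \cong \ZZ$ precisely when it is a product. The details you fill in (reducing to $g(S)=g(K)$ so that $S$ is incompressible and $Y(S)$ is irreducible, and the translation between ``$Y(S)$ is a product'' and ``$K$ is fibred with fibre $S$'') are exactly the steps the paper leaves implicit.
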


Given a balanced sutured manifold $(M,\g)$, let $S(M,\g) \subset \spinc(M,\g)$
be the support of $SFH(M,\g)$; i.e.,
\[
S(M,\g) = \{\, \s \in \spinc(M,\g) \colon SFH(M,\g,\s) \neq 0 \,\}.
\]
Choose an arbitrary affine isomorphism $i$ between $\spinc(M,\g)$ and $H_1(M;\ZZ)$,
and let $j \colon H_1(M;\ZZ) \to H_1(M,\RR)$ be the map induced by $\ZZ \hookrightarrow \RR$.
We denote by $P(M,\g)$ the convex hull of $j \circ i (S(M,\g))$ in $H_1(M;\RR)$, we call this
the \emph{sutured Floer homology polytope} of $(M,\g)$ (here, we have deviated slightly
from the conventions of~\cite{polytope} to simplify the discussion).
The main technical result of~\cite{polytope} is the following, which can be viewed as a
generalization of Theorem~\ref{thm:product}.

\begin{theorem}
Let $(M,\g)$ be a taut balanced sutured manifold that is reduced and horizontally prime,
and suppose that $H_2(M) = 0$. Then
\[
\dim P(M,\g) = \dim H_1(M; \RR).
\]
In particular,
\[
\text{rk } SFH(M,\g) \ge b_1(\partial M)/2 + 1.
\]
\end{theorem}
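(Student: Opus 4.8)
The plan is to prove full-dimensionality of $P(M,\g)$ by showing that no nonzero linear functional is constant on the support $S(M,\g)$, and then to read off the rank bound from a vertex count combined with the half-lives-half-dies principle. First I would set up a reformulation. Since $SFH(M,\g)$ is finitely generated, $S(M,\g)$ is finite, so $P(M,\g)$ is the convex hull of finitely many lattice points; in particular it is a rational polytope and each of its vertices lies in $j\circ i(S(M,\g))$. If $\dim P(M,\g) < \dim H_1(M;\RR)$, then $P(M,\g)$ lies in an affine hyperplane $\{x : f(x) = c\}$ for some nonzero $f\in H^1(M;\RR)$, and by rationality we may take $f$ to come from an integral class. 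Under the Poincar\'e--Lefschetz isomorphism $H^1(M)\cong H_2(M,\partial M)$, such an $f$ is Poincar\'e dual to a nonzero class $\alpha\in H_2(M,\partial M;\ZZ)$. Hence it suffices to show: for every nonzero $\alpha\in H_2(M,\partial M;\ZZ)$, the affine functional on $\spinc(M,\g)$ obtained by pairing differences of $\spinc$-structures against $PD(\alpha)$ takes at least two distinct values on $S(M,\g)$.

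Fix such an $\alpha$. As in the proof of Theorem~\ref{thm:product}, I would use the surface-realization results of \cite{decomposition,polytope} to produce nice \emph{taut} decomposing surfaces $S_+$ and $S_-$ in $(M,\g)$ with $[S_+]=\alpha$ and $[S_-]=-\alpha$ in $H_2(M,\partial M)$; the assumption $H_2(M)=0$ lets us arrange that they have no closed components, and $\alpha\ne 0$ together with tautness guarantees that their norm-minimizing representatives are genuinely nontrivial. Decomposing preserves tautness, so $SFH(M'_+,\g'_+)$ and $SFH(M'_-,\g'_-)$ are both nonzero by Theorem~\ref{thm:taut}, and the decomposition formula (Theorem~\ref{thm:decomp}) identifies them with $\bigoplus_{\s\in O_{S_+}} SFH(M,\g,\s)$ and $\bigoplus_{\s\in O_{S_-}} SFH(M,\g,\s)$ respectively; in particular $S(M,\g)$ meets both $O_{S_+}$ and $O_{S_-}$. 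The heart of the matter is the \emph{separation statement}: for any reference $\s_0\in\spinc(M,\g)$,
\[
\min_{\s\in O_{S_+}} \langle PD(\alpha),\, \s - \s_0\rangle \ >\ \max_{\s\in O_{S_-}} \langle PD(\alpha),\, \s - \s_0\rangle .
\]
Granting it, the functional $\langle PD(\alpha),\,\cdot\,\rangle$ takes at least two distinct values on $S(M,\g)$, and letting $\alpha$ vary yields $\dim P(M,\g) = \dim H_1(M;\RR) = b_1(M)$.

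The separation statement is the main obstacle, and this is where the hypotheses \emph{reduced} and \emph{horizontally prime} are used. The idea is that decomposing along a nice taut surface $S$ with $[S]=\alpha$ cuts $P(M,\g)$ off along its extremal face in the $PD(\alpha)$-direction, the depth of the cut being governed by a relative Thurston-norm-type quantity attached to $S$, while $O_S$ is (via the outer/gluing correspondence) a translate of the polytope of the decomposed manifold sitting inside that face. The real work, refining the disjointness statement $O_{S_+}\cap O_{S_-}=\emptyset$ of Theorem~\ref{thm:product}, is a cut-and-paste analysis of minimal-complexity decomposing surfaces showing that these two extremal faces lie \emph{strictly} on opposite sides of the hyperplane $\langle PD(\alpha),\,\cdot\,\rangle = \mathrm{const}$. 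The degenerate possibility -- that the two faces coincide -- is precisely excluded by horizontal primeness (so $\alpha$ is not proportional to the horizontal class $[R_+(\g)]$, along which decomposition leaves $SFH$ unchanged), by reducedness (so no incompressible product annulus can be exploited to lower the complexity that measures the width of the cut), and by tautness with $\alpha\ne 0$ (so the norm-minimizing representative is nontrivial). I expect this to be the most delicate part of the proof.

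Finally, the rank estimate. A polytope of dimension $d$ has at least $d+1$ vertices, and all of them lie in $j\circ i(S(M,\g))$, so $|S(M,\g)|\ge b_1(M)+1$; since each summand $SFH(M,\g,\s)$ with $\s\in S(M,\g)$ is nonzero, and hence contributes at least $1$ to the rank, $\mathrm{rk}\,SFH(M,\g)\ge b_1(M)+1$. As $M$ is a compact oriented $3$-manifold, the half-lives-half-dies principle (Poincar\'e--Lefschetz duality applied to the long exact sequence of the pair $(M,\partial M)$) gives $b_1(M)\ge b_1(\partial M)/2$, and therefore $\mathrm{rk}\,SFH(M,\g)\ge b_1(\partial M)/2 + 1$, as claimed.
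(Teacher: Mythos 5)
The paper states this theorem without proof, and your outline is exactly the strategy of the cited reference and the direct generalization of the survey's own sketch of Theorem~\ref{thm:product}: realize each nonzero $\alpha \in H_2(M,\partial M)$ and its negative by nice taut decomposing surfaces, use the decomposition formula together with tautness of the decomposed pieces to see that both $O_{S_+}$ and $O_{S_-}$ meet the support, invoke reducedness and horizontal primeness to separate them, and finish with the vertex count and half-lives-half-dies. The one step you leave as a black box --- the cut-and-paste argument showing that $O_{S_+}$ and $O_{S_-}$ are distinct level sets of the affine functional dual to $\alpha$ --- is precisely the step the survey also leaves unproved in its sketch of Theorem~\ref{thm:product}, so your proposal is correct at the level of detail the paper itself offers.
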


The above results can be used to study Seifert surfaces of knots and links.
There are several natural notions of equivalence between Seifert surfaces. We say that
the Seifert surfaces $R$ and $R'$ of a knot $K$ in $Y$ are \emph{strongly equivalent} if they
are isotopic in the knot complement $X(K) = Y \setminus N(K)$.
If $R$ is a Seifert surface of a knot $K$ and $R'$ is a Seifert surface of a knot $K'$,
then $S$ and $S'$ are \emph{weakly equivalent} if they are ambient isotopic in~$Y$.
The following is \cite[Theorem~2.3]{Seifert}.

\begin{theorem}
Let $K$ be a knot in~$S^3$ of genus~$g$, and let $n > 0$ be an integer. If
\[
\text{rk}\, \widehat{HFK}(K,g) < 2^{n+1},
\]
then $K$ has at most $n$ disjoint, pairwise strongly inequivalent minimal genus Seifert surfaces.
\end{theorem}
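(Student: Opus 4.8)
I will prove the contrapositive: assuming $K$ has $n+1$ disjoint, pairwise strongly inequivalent minimal genus Seifert surfaces $R_0,\dots,R_n$, I show that $\text{rk}\,\widehat{HFK}(K,g)\ge 2^{n+1}$. The first step is to normalise the configuration. Since $\partial R_0,\dots,\partial R_n$ are disjoint longitudes on $\partial X(K)=T^2$, where $X(K)=S^3\setminus N(K)$, they are automatically parallel, and a small isotopy makes the $R_i$ mutually parallel near $T^2$. They then sit in a well-defined cyclic order and cut $X(K)$ into $n+1$ connected pieces $N_0,\dots,N_n$, where $N_i$ is a sutured cobordism whose top and bottom are (copies of) $R_i$ and $R_{i+1}$ together with annuli coming from $\partial X(K)$ (indices mod $n+1$).

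Next, decompose $Y(K)$ along $R_0$: by Proposition~\ref{prop:HFK} the result is the sutured manifold $Y(R_0)$ with $SFH(Y(R_0))\cong\widehat{HFK}(K,g)$, and topologically $Y(R_0)$ is the chain $N_0\cup_{R_1}N_1\cup\dots\cup_{R_n}N_n$. The surfaces $R_1,\dots,R_n$ are separating in $Y(R_0)$ and, being minimal genus Seifert-type surfaces, may be taken well-groomed, hence nice. So I decompose $Y(R_0)$ along $R_1$, then along $R_2$ in the connected piece still containing it, and so on; after all $n$ steps the result is the disjoint union $N_0\sqcup\dots\sqcup N_n$. Iterating the decomposition formula (Theorem~\ref{thm:decomp}), together with the fact that $SFH$ of a disjoint union of balanced sutured manifolds is the tensor product of their sutured Floer homologies, shows that
\[
SFH(N_0)\otimes SFH(N_1)\otimes\dots\otimes SFH(N_n)
\]
is a direct summand of $SFH(Y(R_0))\cong\widehat{HFK}(K,g)$.

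It remains to bound each tensor factor below. Each $N_i$ is irreducible (it is cut from the irreducible manifold $X(K)$ along incompressible surfaces) and taut (cutting a taut sutured manifold along minimal genus, hence norm-minimising, well-groomed surfaces produces taut pieces, by Gabai's sutured manifold theory). Crucially, no $N_i$ is a product: a product structure $N_i\cong R_i\times I$ would give an isotopy inside $N_i\subset X(K)$ carrying $R_i$ to $R_{i+1}$, contradicting strong inequivalence --- here one uses that \emph{all} pairs among $R_0,\dots,R_n$, and in particular the consecutive ones, are assumed strongly inequivalent. Hence Theorem~\ref{thm:product} gives $\text{rk}\,SFH(N_i)\ge 2$ for each $i\in\{0,\dots,n\}$, and therefore
\[
\text{rk}\,\widehat{HFK}(K,g)\ \ge\ \prod_{i=0}^{n}\text{rk}\,SFH(N_i)\ \ge\ 2^{n+1}.
\]

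The main obstacle is the bookkeeping of sutured structures under the iterated decomposition: one has to check that at each stage the next surface really is a nice (well-groomed) decomposing surface in the relevant connected piece, and that every piece $N_i$ stays balanced and taut so that Theorem~\ref{thm:product} applies. This is precisely where the minimal genus hypothesis enters, via Gabai's machinery, and the parallel-near-$K$ normalisation is what ensures that cutting produces exactly $n+1$ pieces rather than fewer --- which is what makes the exponent come out as $n+1$.
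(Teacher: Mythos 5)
Your proof is correct and follows essentially the same route as the paper: cut the Seifert surface complement along the remaining surfaces, observe that no complementary piece is a product because consecutive surfaces are strongly inequivalent, and apply Theorem~\ref{thm:product} to each piece. The only cosmetic difference is that you invoke the direct-summand form of the decomposition formula for the horizontal cuts, whereas the paper notes that these null-homologous (horizontal) decompositions leave the rank of $SFH$ unchanged; either suffices.
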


\begin{proof}
Suppose that $R_1, \dots, R_n$ are disjoint, strongly inequivalent minimal genus Seifert surfaces
for~$K$. Then $R_2, \dots, R_n$ are disjoint non-isotopic horizontal surfaces in
the sutured manifold $S^3(R_1)$ complementary to~$R_1$. If we decompose $S^3(R_1)$
along $R_2, \dots, R_n$, the rank of $SFH$ remains unchanged as each such surface
is null-homologous. We end up with the disjoint union of $n$ sutured manifolds, none
of which is a product as the~$R_i$ are non-isotopic. Hence, by Theorem~\ref{thm:product},
the~$SFH$ of each piece has rank at least two, and $\text{rk}\, SFH(S^3(R_1)) \ge 2^n$.
\end{proof}

As a special case, we obtain that if the leading coefficient of the Alexander polynomial
of an alternating knot has absolute value less than four, then it has a unique minimal genus Seifert
surface up to strong equivalence. This was not known before. An elementary proof of this
fact was later given by Banks~\cite{Banks}.

$SFH$ is also useful for distinguishing Seifert surfaces up to weak equivalence.
Given Seifert surfaces~$R$ and~$R'$ in~$S^3$ such that $\partial R$ and $\partial R'$
are ambient isotopic, Proposition~\ref{prop:HFK} might be discouraging as it implies that
\[
SFH(S^3(R)) \cong SFH(S^3(R')).
\]
However, recall that these groups are graded by $\spinc$-structures, which are affine
spaces over $H_1(S^3 \setminus R)$ and $H_1(S^3 \setminus R')$, respectively.
Often, even these graded groups are different. However, in many of these cases,
if we also consider the Seifert forms on $H_1(R) \cong H_1(S^3 \setminus R)$
and on $H_1(R') \cong H_1(S^3 \setminus R')$ (where the isomorphisms are given
by Alexander duality), we can distinguish~$R$ and~$R'$. Note that the Seifert
form encodes the way $R_+$ and $R_-$ are glued together in~$S^3(R)$. This
idea was presented by Hedden, Sarkar, and the author~\cite{HJS}.
The first example of Seifert surfaces $R$ and $R'$ where the graded groups themselves are different
was given by Altman~\cite{Irida}.

Sutured Floer homology currently has extensions in two different directions. One is bordered sutured Floer homology,
due to Zarev~\cite{Zarev}. In his theory, part of the boundary of the manifold is sutured,
part of it is bordered. He obtains various nice gluing results in this context.
The second is a generalization of the minus version of Heegaard Floer homology to sutured manifolds,
due to Alishahi and Eftekhary~\cite{AE}. The chain complex they define is over an algebra depending
on the sutures, and is well-defined up to chain homotopy equivalence. The relations in the algebra
correspond to the disks with boundary entirely in $\T_\a$ or $\T_\b$, and which obstruct~$\partial^2 = 0$.

Kronheimer and Mrowka~\cite{propertyP} extended instanton and monopole Floer homology
to balanced sutured manifolds. Applied to the sutured manifold complementary
to a link, they obtained new link invariants in the instanton and monopole settings.
This led to a new, considerably simpler proof of the Property~P conjecture,
and a proof of the fact that Khovanov homology detects the unknot, see~\cite{unknot}.
We now outline the proof of the latter result. Kronheimer and Mrowka~\cite{sing-inst}
defined another invariant of links using singular instantons, and showed that
for knots it agrees with the sutured instanton knot invariant. They
exhibited that there is a spectral sequence starting from reduced Khovanov homology
and converging to the singular instanton knot invariant.
The key step in the construction of the spectral sequence is showing that
the singular instanton knot invariant satisfies an unoriented skein exact triangle.
Finally, they proved an
analogue of the decomposition formula for the sutured instanton invariant,
which, using the arguments outlined earlier, implies that the sutured instanton knot invariant
detects the Seifert genus, and hence in particular it detects the unknot.
Another consequence of the above discussion is that if one could show that knot Floer homology is
isomorphic to the sutured instanton knot invariant, then one would
get a positive answer to Conjecture~\ref{conj:ss} for knots; i.e., that there is a spectral sequence
from reduced Khovanov homology to knot Floer homology.

It is worth mentioning that bordered Floer homology, just like $SFH$, is also defined using Heegaard surfaces with boundary.
But whereas in $SFH$ all the $\a$- and $\b$-curves lie in the interior of~$\S$, in the bordered
theory one might also have $\a$- or $\b$- arcs with ends on~$\partial \S$.
Then they consider Lipshitz's cylindrical reformulation~\cite{Lipshitz}, and count curves
in the 4-manifold $\S \times I \times \RR$. The complex structure on~$\S$ is chosen such that
the boundary becomes a puncture.
The bordered algebra associated with the boundary of the manifold encodes how holomorphic curves
limit to~$\partial \S$. The theory lends itself to nice gluing formulas, obtained by cutting an
ordinary Heegaard diagram into two pieces along a curve.

\bibliographystyle{amsplain} \bibliography{topology}

\end{document}